\documentclass[reqno]{amsart}
\usepackage{caption}
\usepackage{subcaption}

\usepackage{mathabx,cite}
\usepackage[utf8]{inputenc} 
\usepackage[english]{babel}
\usepackage{amstext}
\usepackage{euscript}
\usepackage{bbm}
\usepackage{mathrsfs}
\usepackage{enumerate}
\usepackage{graphicx}
\usepackage{amscd}
\usepackage{verbatim}
\usepackage{amssymb}
\usepackage{amsthm}
\usepackage{amsmath}
\usepackage{amstext}
\usepackage{amsfonts}
\usepackage{latexsym}
\usepackage{mathtools} 
\usepackage{enumitem} 
\usepackage{accents} 
\usepackage[foot]{amsaddr} 

\usepackage{cases} 

\usepackage[left=1.2in, right=1.2in, top=1.5in, bottom=1.5in]{geometry}

\usepackage[usenames,dvipsnames]{xcolor} 
\definecolor{dkblue}{RGB}{1,31,91} 
\usepackage[colorlinks=true, pdfstartview=FitV, linkcolor=dkblue, citecolor=dkblue, urlcolor=dkblue]{hyperref}

\usepackage{todonotes}

\newcommand{\Z}{\mathbb{Z}}
\newcommand{\R}{\mathbb{R}}

\newcommand{\CC}{\mathbb C}
\newcommand{\RR}{\mathbb R}

\newcommand{\ZZ}{\mathbb Z}

\newcommand{\TT}{\mathbb T}

\newcommand{\pa}{\partial}

\DeclareMathOperator*{\esssup}{ess\,sup}

\normalsize
\normalsize
\usepackage{stmaryrd}
\def\comm#1#2{{\left\llbracket#1,#2\right\rrbracket}}

\DeclareMathOperator{\sgn}{sgn}

\newcommand{\norm}[1]{\left\lVert#1\right\rVert}

\theoremstyle{definition}
\newtheorem{theorem}{Theorem}

\newtheorem{lemma}[theorem]{Lemma}

\newtheorem{remark}[theorem]{Remark}

\numberwithin{equation}{section}
\numberwithin{theorem}{section}
\numberwithin{definition}{section}

\begin{document}
	
	\keywords{hydroelastic waves; fluid--structure interaction; asymptotic models; well-posedness}

\subjclass[2010]{35Q35, 76B15}
	
	\title[Weakly nonlinear models for hydroelastic water waves]{Weakly nonlinear models for hydroelastic water waves}

	\author[D. Alonso-Or\'an]{Diego Alonso-Or\'an}
	\address{Departamento de An\'{a}lisis Matem\'{a}tico and Instituto de Matem\'aticas y Aplicaciones (IMAULL), Universidad de La Laguna, C/Astrof\'{i}sico Francisco S\'{a}nchez s/n, 38271, La Laguna, Spain. \href{mailto:dalonsoo@ull.edu.es}dalonsoo@ull.edu.es}

	\author[R. Granero-Belinch\'on]{Rafael Granero-Belinch\'on}
	\address{Departamento de Matemáticas, Estadística y Computación, Universidad de Cantabria, Avda. Los
		Castros s/n, Santander, Spain.
		\href{mailto:rafael.granero@unican.es}rafael.granero@unican.es}

	\author[J. S. Ziebell]{Juliana S. Ziebell}
	\address{Departamento de Matemática Pura e Aplicada
		Universidade Federal do Rio Grande do Sul
		Porto Alegre, RS 91509, Brazil
		\href{mailto:julianaziebell@ufrgs.br}julianaziebell@ufrgs.br}

	\begin{abstract}
	In this work, we derive reduced interface models for hydroelastic water waves coupled to a nonlinear viscoelastic plate. In a weakly nonlinear small-steepness regime we obtain bidirectional nonlocal evolution equations capturing the interface dynamics up to quadratic order, and we also derive two unidirectional models describing one-way propagation while retaining the leading dispersive and dissipative effects induced by the plate. Remarkably, one of the bidirectional model has a doubly nonlinear structure in the sense that there there is a nonlinear elliptic operator acting on the acceleration of the interface. We prove local well-posedness for the bidirectional model for small data via a two-parameter regularization and nested fixed points. For the unidirectional models, we obtain local well-posedness for arbitrary data and global well-posedness for small data.

	\end{abstract}
	\thispagestyle{empty}

	\maketitle
	\tableofcontents

	\section{Introduction}
	The interaction between incompressible fluids and deformable elastic
	structures stands as one of the most challenging areas in modern
	mathematical analysis of partial differential equations. The past two
	decades have seen a substantial expansion of the field, driven by
	applications ranging from naval hydrodynamics \cite{AlbenShelley2008} and aeroelasticity to the
	mathematics of oceanic waves interacting with sea ice or floating plates, \cite{SquireDuganWadhamsRottierLiu1995}. At a mathematical level, these problems combine free-boundary fluid dynamics with the
nonlinear geometry and high-order mechanics of thin structures, leading to systems in which
hyperbolic (inertia), dispersive (bending), and sometimes dissipative (viscoelastic) effects interact in a
genuinely nonlocal way. \medskip

 In this context, one of the most prominent
and physically relevant settings is hydroelasticity, where surface waves propagate beneath a deformable
medium such as sea ice or an elastic plate, and where geometric shell models provide a natural framework to
describe the elastic forces. In contrast with classical water-wave theory, in which
	the interface is massless and obeys the Bernoulli dynamic condition, the
	hydroelastic interface carries mass, elasticity, and, in many situations,
	damping. A fundamental reference in this direction is the work of
	Plotnikov and Toland \cite{PlotnikovToland}, who derived a general
	geometrically nonlinear model for an elastic shell of Cosserat type
	interacting with an irrotational Euler fluid. Existing studies of hydroelastic waves are primarily devoted to traveling-wave solution, either establishing their existence or computing them, most notably in the massless case \cite{Toland2008} and, when the interface mass is included, in \cite{Toland2007Heavy,BaldiToland2010}. In two spatial dimensions the situation has also been extensively studied,
	most notably in the vortex-sheet formulation introduced by Ambrose and Liu
	\cite{AmbroseLiu}. There, the interface is assumed to be a one-dimensional
	curve with elastic properties, and the fluid velocity is described by the
	Birkhoff--Rott integral. Despite the apparent simplicity of the geometry,
	the coupling between elasticity and the Kelvin–Helmholtz instability of the
	vortex sheet leads to a highly delicate system. The authors obtained local
	well-posedness in Sobolev spaces and studied regularity mechanisms arising
	from the elastic terms.
 \medskip
	
	Beyond hydroelasticity, several lines of work have addressed fluid-structure interaction problems
	involving three-dimensional incompressible fluids and deformable solids.
	One classical approach, initiated by Coutand and Shkoller \cite{CoutandShkoller-FSI}, employs a fully
	Lagrangian framework for the elastodynamics of the solid and transforms the
	fluid equations to the material coordinates of the structure. This method
	has proven particularly effective for fluid–solid systems in which the
	structure retains a volumetric geometry. More recently, Kukavica, Tuffaha,
	and Ziane \cite{KukavicaTuffahaZiane2009,KukavicaTuffahaZiane2010} developed a sharp regularity theory for incompressible fluids
	interacting with elastic solids, establishing local well-posedness under
	minimal Sobolev assumptions. Their work highlights the need for precise
	control of the deformation of the elastic region, the mapping of the fluid
	equations to a moving coordinate system, and the geometric terms appearing
	in the transformed system.  \medskip
	
	In the context of thin structures, models based on the Koiter shell energy
	have emerged as a natural limit of three-dimensional elasticity. The work
	of Muha and Canić \cite{MuhaCanic2013} represents a landmark result in this
	direction: they establish the existence of weak solutions for the coupling
	between a viscous incompressible fluid and a Koiter shell, using an
	operator-splitting approach together with Aleksandrov transformations of
	the moving domain. Their setting assumes a cylindrical structure and
	exploits symmetries to obtain compactness and energy bounds that are
	otherwise difficult to establish. See also \cite{MachaMuhaNecasovaRoyTrifunovic2022} where
	the existence theory has also been extended to thermally coupled fluid--structure interaction models. \medskip

A further development is due to Balakrishna, Kukavica, Muha, and Tuffaha
\cite{BKMT2024}, who studied the interaction between the three-dimensional
Euler equations and a nonlinear two-dimensional Koiter plate represented as
a graph. Their model incorporates the full geometric nonlinearity of the
Koiter energy, the inertia of the plate, and Kelvin--Voigt damping, and it is
posed on a time-dependent fluid domain whose upper boundary is the unknown
graph. Related inviscid free-boundary fluid--structure models, including
variants with alternative structural laws, different solution concepts, and
global-in-time results in weak frameworks have also been investigated in
\cite{KukavicaTuffaha2023JEE,KukavicaTuffaha2024,AlazardShaoYang2025,AlazardKukavicaTuffaha2025,AydinKukavicaTuffaha2025preprint,WanYang2025}.
\medskip

\subsection{Contributions and main results}\label{subsec:contrib}

This work addresses a three-dimensional inviscid fluid--structure interaction
problem in which an incompressible Euler flow evolves beneath a nonlinear
viscoelastic plate represented as a graph. Starting from the geometric
Euler--plate system \eqref{eq:FSI_system}, we pursue a twofold program. On the
one hand, we derive reduced interface equations in a weakly nonlinear,
small-steepness regime, obtaining asymptotic models that capture the leading
hydroelastic dynamics. On the other hand, we provide a well-posedness theory
for these reduced equations. A similar program has been implemented for fluid-solid interaction problems in the case of Stokes law \cite{BukalMuha1,BukalMuha2} and Darcy law \cite{AlonsoOranGraneroBelinchon2026}.

\smallskip

\smallskip

Assuming irrotationality, we reduce the Euler--plate system to a boundary formulation and perform a weakly nonlinear
small-steepness expansion in $\varepsilon=H/L$. Truncating at quadratic accuracy yields two closed bidirectional
interface equation for the renormalized displacement $f=\eta^{(0)}+\varepsilon\eta^{(1)}$,
\begin{equation}\label{eq:main_bidirectional_summary}
\big(I+\Upsilon\Lambda\big) f_{tt}
+\delta\,\Lambda^{3} f_t
+\Big(\Lambda+\frac{\beta}{4}\Lambda^{5}\Big)f
=
\varepsilon\,\mathfrak N[f],
\end{equation}
where the quadratic forcing $\mathfrak N[f]$ depends on the bidirectional model under consideration. On the one hand, the first bidirectional model has a nonlinearity given by \eqref{eq:f_equation_compact_closed}. This first bidirectional model is doubly nonlinear in the sense that there is a nonlinear elliptic operator acting on the acceleration $f_{tt}$ of the membrane. On the other hand, a second bidirectional model (with the same accuracy) has a nonlinearity given by
explicitly in \eqref{eq:def_N_compact_final_T}. Moreover, by restricting to one horizontal dimension and introducing the characteristic variables
\[
\xi=x-t,\qquad \tau=\varepsilon t,\qquad f(x,t)=F(\xi,\tau),
\]
we obtain two unidirectional slow-modulation models. The first one, associated with the quadratic-precision
bidirectional equation \eqref{eq:f_equation_compact_closed}, can be written as
\begin{equation}\label{eq:main_uni1_summary}
F_\tau=\frac{1}{\varepsilon}\big(a+b\,\mathcal H\big)\!\left((I+\Upsilon\Lambda)F_\xi+\frac{\beta}{4}\Lambda F_{\xi\xi\xi}+\delta\Lambda F_{\xi\xi}+\mathcal H F\right)
-\big(a+b\,\mathcal H\big)\!\left(2F_\xi\,\Lambda F-\comm{\Lambda}{F}F_\xi\right),
\end{equation}
where $a=a(\Lambda,\partial_\xi)$ and $b=b(\Lambda,\partial_\xi)$ are the real Fourier multipliers defined in
\eqref{eq:ab_operators_repeat} (all multipliers understood with respect to $\xi$).
The second one, associated with the alternative quadratic closure \eqref{eq:f_model_compact_final_again}, takes the form
\begin{equation}\label{eq:main_uni2_summary}
\begin{aligned}
F_\tau
&=
\frac{1}{\varepsilon}\Big(\alpha+\gamma\,\mathcal H\Big)
\Big[
(I+\Upsilon\Lambda)F_{\xi\xi}
+\frac{\beta}{4}\Lambda F_{\xi\xi\xi\xi}
+\delta\Lambda F_{\xi\xi\xi}
+\Lambda F
\Big]\\
&\quad
-\Big(\alpha+\gamma\,\mathcal H\Big)
\Bigg[
\frac12\,\Lambda\Big(F_\xi^{\,2}-(\Lambda F)^2\Big)
-\comm{\Lambda}{F_\xi}F_\xi
+F_{\xi\xi}\Lambda F
+\comm{\Lambda}{F}\,\mathcal T F
-F_\xi\,\mathcal H\mathcal T F \\
&\qquad\qquad\quad
+\frac{\beta}{4}\Big(\comm{\Lambda}{F}\,\mathcal T F_{\xi\xi\xi\xi}
-F_\xi\,\Lambda \mathcal T F_{\xi\xi\xi}\Big)
+\delta\Big(\comm{\Lambda}{F}\,\mathcal T F_{\xi\xi\xi}
-F_\xi\,\Lambda \mathcal T F_{\xi\xi}\Big)
\Bigg],
\end{aligned}
\end{equation}
where $\alpha=\alpha(\Lambda,\partial_\xi)$ and $\gamma=\gamma(\Lambda,\partial_\xi)$ are the real Fourier multipliers
introduced in \eqref{eq:M1inv_ab_form}, and $\mathcal T=(I+\Upsilon\Lambda)^{-1}\Lambda$ is defined in
\eqref{eq:def_T_multiplier_new}.
\smallskip

The second purpose of this paper is to establish a well-posedness theory for the reduced models derived above. More precisely:
\begin{itemize}[leftmargin=*,labelsep=.6em,itemsep=.35em,topsep=.35em]
	\item \emph{Bidirectional local well-posedness for small data.}
	We prove local existence and uniqueness for the bidirectional interface equation
	\eqref{eq:f_equation_compact_closed} for mean-zero initial data
	\[
	f_0\in H^3_0(\TT^2),\qquad f_t(\cdot,0)\in H^1_0(\TT^2),
	\qquad \|f_0\|_{H^3}\ \text{sufficiently small},
	\]
	see Theorem~\ref{thm:WPbi}. A key difficulty in the bidirectional model \eqref{eq:f_equation_compact_closed} is that, once written as the
first-order system \eqref{eq:f_system_compact}, the coupling involves the unknown time derivative through
$\mathbb N\!\big(f,\partial_t v\big)$, and the problem is not a standard semilinear evolution and in fact it has a doubly nonlinear structure akin to
$$
v_t+N^1(v_t)=N^2(v),
$$
with $N^1$ being a nonlinear elliptic operator. Our approach is therefore
based on a two-parameter regularization and a nested fixed-point scheme;  see Steps~1--5 in the proof of Theorem~\ref{thm:WPbi}.
	
	\item \emph{Unidirectional local well-posedness and small-data global decay (first reduction).}
	For the unidirectional slow-modulation model \eqref{eq:Ft_final_ab_noG} (equivalently \eqref{eq:main_uni1_summary}),
we establish local well-posedness for arbitrary mean-zero data $F_0\in H^2(\TT)$; see Theorem~\ref{th:LWP:uni}.
	The argument relies on a priori energy estimates in the symmetrized formulation together with a classical Friedrichs
	mollification scheme. Moreover, for sufficiently small $H^2$ data we prove global-in-time existence and exponential decay
	of the natural energy; see Theorem~\ref{th:GWP_decay_H2}.

	\item \emph{Global small-data well-posedness (second reduction).}
	For the more singular unidirectional model \eqref{eq:Ft_final_abH_no_integration} (equivalently \eqref{eq:main_uni2_summary}),
we obtain global well-posedness for sufficiently small mean-zero data $F_0\in H^3(\TT)$; see Theorem~\ref{th:GWP_small_H3}.
	The proof is based on an energy method with a bootstrap/absorption step which allows the highest-order nonlinear terms to be
	controlled by the linear dissipation as long as the solution remains small.
\end{itemize}
\smallskip

\subsection{Notation and preliminary estimates}\label{sec:notation}
In what follows we collect the notation and the analytic tools that will be used throughout the paper, in particular basic properties of the Fourier multipliers involved and the Sobolev/product estimates needed in the well-posedness analysis.
\subsubsection*{Notation and functional setting} We work with horizontal variables $x=(x_1,x_2)\in\TT^2$, vertical variable $x_3\in(-\infty,0)$,
and time $t\ge0$. We write
\[
\nabla=(\partial_{x_1},\partial_{x_2},\partial_{x_3}),\qquad
\Delta=\partial_{x_1}^2+\partial_{x_2}^2+\partial_{x_3}^2,\qquad
\nabla_x=(\partial_{x_1},\partial_{x_2}),\qquad
\Delta_x=\partial_{x_1}^2+\partial_{x_2}^2,
\]
and $\partial_t$ for the time derivative. Throughout the paper the notation $A\lesssim B$
means $A\le C\,B$ for a constant $C>0$ which may depend on fixed parameters
(e.g.\ $\Upsilon,\delta,\beta$) but not on the evolving functions. \medskip

For $d\in\{1,2\}$ and $f:\TT^d\to\CC$ we write the Fourier series
\[
f(x)=\sum_{k\in\ZZ^d}\widehat f(k)e^{ik\cdot x},
\qquad
\widehat f(k)=\frac{1}{(2\pi)^d}\int_{\TT^d} f(x)e^{-ik\cdot x}\,dx,
\qquad
|k|=(k_1^2+\cdots+k_d^2)^{1/2}.
\]
For $s\in\RR$ we define
\[
\|f\|_{H^s(\TT^d)}^2:=\sum_{k\in\ZZ^d}(1+|k|^2)^s|\widehat f(k)|^2,
\qquad
\langle f\rangle:=\frac{1}{(2\pi)^d}\int_{\TT^d} f,
\qquad
H^s_0(\TT^d):=\{f\in H^s(\TT^d):\langle f\rangle=0\}.
\]
On mean-zero functions we also use the homogeneous seminorm
\[
\|f\|_{\dot H^s(\TT^d)}^2:=\sum_{k\neq0}|k|^{2s}|\widehat f(k)|^2,
\]
and interpret $\Lambda^{-1}$ mode-by-mode on $k\neq0$. By Poincar\'e,
for $s\ge0$ and $f\in H^s_0(\TT^d)$,
\begin{equation}\label{eq:Poincare_hom_equiv}
\|f\|_{H^s(\TT^d)}\sim \|f\|_{\dot H^s(\TT^d)}.
\end{equation}
\subsubsection*{Basic estimates and inequalities}
On $\TT^2$ we denote $\Lambda:=(-\Delta_x)^{1/2}$, that is
$\widehat{\Lambda f}(k)=|k|\,\widehat f(k)$, and we use the Riesz transforms
\[
\widehat{\mathcal R_j f}(k)=-i\frac{k_j}{|k|}\widehat f(k),\qquad k\neq0,\quad j=1,2,
\qquad
\mathcal R f=(\mathcal R_1 f,\mathcal R_2 f).
\]
They are bounded on Sobolev spaces: for all $s\in\RR$,
\begin{equation}\label{eq:Riesz_Hs}
\|\mathcal R_j f\|_{H^s(\TT^2)}\lesssim \|f\|_{H^s(\TT^2)},
\qquad
\|\mathcal R_j f\|_{\dot H^s(\TT^2)}\lesssim \|f\|_{\dot H^s(\TT^2)}.
\end{equation}
We also use the Sobolev embedding
\begin{equation}\label{eq:Sobolev_2d_embed}
H^2(\TT^2)\hookrightarrow W^{1,\infty}(\TT^2).
\end{equation}
On $H^s_0(\TT^2)$ the resolvent $(I+\Upsilon\Lambda)$ is invertible mode-by-mode with symbol
$(1+\Upsilon|k|)^{-1}$ for $k\neq0$, and in particular
\begin{equation}\label{eq:resolvent_Hs}
\|(I+\Upsilon\Lambda)^{-1}G\|_{H^s(\TT^2)}\le \|G\|_{H^s(\TT^2)},
\qquad G\in H^s_0(\TT^2).
\end{equation} \medskip

When restricting to one horizontal dimension we work on $\TT$ with variable $\xi$.
Fourier multipliers $\Lambda=|D_\xi|$ and $\Lambda^{-1}$ are understood with respect to $\xi$.
We use the Hilbert transform $\mathcal H$,
\[
\widehat{\mathcal H g}(k)=-i\,\sgn(k)\,\widehat g(k),\qquad k\in\ZZ,
\]
so that $\mathcal H^\ast=-\mathcal H$ on $L^2(\TT)$ and $\mathcal H^2=-I$ on mean-zero functions.
On nonzero modes we have the identities
\begin{equation}\label{eq:Hilbert_identities}
\Lambda=\mathcal H\,\partial_\xi,
\qquad
\partial_\xi\Lambda^{-1}=-\mathcal H,
\qquad
\partial_\xi\mathcal H=\mathcal H\partial_\xi,
\end{equation}
and, for mean-zero $f$ and any $s\in\RR$,
\begin{equation}\label{eq:H_isometry_hom}
\|\mathcal H f\|_{\dot H^s(\TT)}=\|f\|_{\dot H^s(\TT)}.
\end{equation}
We also use Tricomi's identities: for sufficiently regular real-valued $f,g$,
\begin{equation}\label{eq:Tricomi_general}
\mathcal H\!\big(f\,\mathcal H g+g\,\mathcal H f\big)
=
(\mathcal H f)(\mathcal H g)-f g,
\qquad
\mathcal H\!\big(f\,\mathcal H f\big)=\frac12\big((\mathcal H f)^2-f^2\big).
\end{equation}
Next, we gather products, commutators and auxiliary multipliers estimates in one dimension. For a linear operator $A$ and a function $f$ we write $\comm{A}{f}g:=A(fg)-f\,Ag$.
We will repeatedly use that for $s>\frac12$ and $f\in H^s_0(\TT)$,
\begin{equation}\label{eq:Sobolev_1d_embed_hom}
\|f\|_{L^\infty(\TT)}\lesssim_s \|f\|_{\dot H^s(\TT)},
\end{equation}
so in particular $H^s(\TT)$ is a Banach algebra for $s>\tfrac12$, with
\begin{equation}\label{eq:alg_inhom_1d}
\|fg\|_{H^s(\TT)}\lesssim_s \|f\|_{H^s(\TT)}\|g\|_{H^s(\TT)},
\qquad
\|fg\|_{\dot H^s(\TT)}\lesssim_s
\|f\|_{L^\infty}\|g\|_{\dot H^s}+\|g\|_{L^\infty}\|f\|_{\dot H^s}.
\end{equation}
We also use the Gagliardo--Nirenberg inequality
\begin{equation}\label{eq:GN_homogeneous_W14}
\|f\|_{\dot W^{1,4}(\TT)}^{2}=\|f_\xi\|_{L^4(\TT)}^{2}
\lesssim
\|f\|_{L^\infty(\TT)}\,\|f\|_{\dot H^{2}(\TT)}.
\end{equation}
Given $\Upsilon>0$, we define the Fourier multiplier
\begin{equation}\label{eq:def_T_multiplier_new}
\mathcal T:=(I+\Upsilon\Lambda)^{-1}\Lambda,
\qquad
\widehat{\mathcal T f}(k)=\frac{|k|}{1+\Upsilon|k|}\,\widehat f(k),\quad k\neq0,
\end{equation}
so that for all $s\in\RR$ and mean-zero $f$,
\begin{equation}\label{eq:T_mapping_hom}
\|\mathcal T f\|_{\dot H^s(\TT)}\le \|f\|_{\dot H^s(\TT)},
\qquad
\|\Lambda \mathcal T f\|_{\dot H^s(\TT)}\lesssim_{\Upsilon}\|f\|_{\dot H^{s+1}(\TT)}.
\end{equation} \medskip

Finally, for $\nu\in(0,1]$ consider $\mathcal J_\nu$ the heat kernel on $\TT^d$ ($d=1,2$) defined by
\[
\widehat{\mathcal J_\nu f}(k):=e^{-\nu |k|^2}\,\widehat f(k),\qquad k\in\ZZ^d.
\]
Then $\mathcal J_\nu$ is self-adjoint on $L^2$, it commutes with $\partial_\xi$ and $\Lambda$
(and with $\mathcal H$ when $d=1$), and $\mathcal J_\nu f\to f$ in $H^s$ as $\nu\to0$
for all $s\in\RR$. Moreover, for every $s\in\RR$ and $m\ge0$,
\begin{equation}\label{eq:mollifier_bounds}
\|\mathcal J_\nu f\|_{H^s}\lesssim \|f\|_{H^s},
\qquad
\|\mathcal J_\nu f\|_{H^{s+m}}\lesssim \nu^{-m}\,\|f\|_{H^s}.
\end{equation}

To close this preliminary section, we record a standard representation formula for the Poisson problem:
\begin{lemma}[Poisson problem with Dirichlet data]\label{lem:Poisson}
Let $b=b(x,x_3)$ and $g=g(x)$ be smooth, $2\pi$--periodic in $x\in\TT^2$, and assume that
$b(\cdot,x_3)$ decays sufficiently fast as $x_3\to-\infty$. Consider the problem
\begin{equation}\label{eq:Poisson_halfspace_DN}
\begin{cases}
\Delta u = b & \text{in }\Omega:=\TT^2\times(-\infty,0),\\
u=g & \text{on }\Gamma:=\TT^2\times\{0\},\\
u(\cdot,x_3)\to 0 & \text{as }x_3\to-\infty.
\end{cases}
\end{equation}
Then \eqref{eq:Poisson_halfspace_DN} admits a unique solution $u$, and for every nonzero Fourier mode
$k\in\ZZ^2\setminus\{0\}$ one has
\begin{equation}\label{eq:DN_formula_mode}
\partial_{x_3}\widehat{u}(k,0)
=
|k|\,\widehat{g}(k)
+\int_{-\infty}^{0} e^{|k|y_3}\,\widehat{b}(k,y_3)\,dy_3.
\end{equation}
Equivalently, in operator form on mean--zero functions,
\begin{equation}\label{eq:DN_formula_operator}
\partial_{x_3}u(\cdot,0)=\Lambda g+\int_{-\infty}^{0} e^{y_3\Lambda}\,b(\cdot,y_3)\,dy_3.
\end{equation}
\end{lemma}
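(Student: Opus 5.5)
Fourier-transform the problem in the horizontal variable $x\in\TT^2$ and reduce it to a family of ODEs in $x_3$, one per mode. Writing $\widehat u(k,x_3)$ and $\widehat b(k,x_3)$ for the horizontal Fourier coefficients, the equation $\Delta u=b$ becomes
\[
\partial_{x_3}^2\widehat u(k,x_3)-|k|^2\widehat u(k,x_3)=\widehat b(k,x_3),\qquad x_3<0,
\]
with boundary data $\widehat u(k,0)=\widehat g(k)$ and $\widehat u(k,x_3)\to0$ as $x_3\to-\infty$. For $k\neq0$ this is a constant-coefficient second-order ODE on the half-line. The homogeneous solutions are $e^{\pm|k|x_3}$, and only $e^{|k|x_3}$ decays at $-\infty$.

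\textbf{Existence and uniqueness.} Uniqueness is immediate: the difference of two solutions solves the homogeneous ODE with zero Dirichlet data and decay at $-\infty$. Hence it is a multiple of $e^{|k|x_3}$ vanishing at $0$, so it is zero. For $k=0$ the ODE is $\partial_{x_3}^2\widehat u=\widehat b(0,\cdot)$. Assuming enough decay of $b$ (and the compatibility implicit in the decay requirement), there is again at most one solution, and the formula is only claimed for $k\ne 0$. For existence, write the explicit solution using the Dirichlet Green's function of $\partial_{x_3}^2-|k|^2$ on $(-\infty,0)$ with decay:
\[
G_k(x_3,y_3)=-\frac{1}{2|k|}\Big(e^{-|k||x_3-y_3|}-e^{|k|(x_3+y_3)}\Big).
\]
This kernel vanishes at $x_3=0$, decays as $x_3\to-\infty$, and has unit jump in its $x_3$-derivative across $x_3=y_3$. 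Set
\[
\widehat u(k,x_3)=\widehat g(k)e^{|k|x_3}+\int_{-\infty}^0 G_k(x_3,y_3)\,\widehat b(k,y_3)\,dy_3 .
\]
Summing the modes gives a smooth solution, provided $b$ decays fast enough for the integrals and the Fourier series to converge. This is where the hypothesis on $b$ is used.

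\textbf{The normal derivative.} Differentiate the formula in $x_3$ and evaluate at $x_3=0$. For $y_3<0$ and $x_3$ near $0$ we have $x_3>y_3$, so $G_k=-\frac1{2|k|}\big(e^{-|k|(x_3-y_3)}-e^{|k|(x_3+y_3)}\big)$. Then
\[
\partial_{x_3}G_k(0,y_3)=-\tfrac{1}{2|k|}\big(-|k|e^{|k|y_3}-|k|e^{|k|y_3}\big)=e^{|k|y_3}.
\]
Combined with $\partial_{x_3}\big(\widehat g e^{|k|x_3}\big)|_{x_3=0}=|k|\widehat g(k)$, this yields \eqref{eq:DN_formula_mode}. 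Rewriting the mode-by-mode formula with $e^{y_3\Lambda}$ gives \eqref{eq:DN_formula_operator} on mean-zero functions.

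The main point to handle carefully is justifying differentiation under the integral up to $x_3=0$. This follows from the continuity of $G_k$ and the boundedness of $\partial_{x_3}G_k$ (it has only a jump on the diagonal), together with the integrability of $\widehat b(k,\cdot)$ supplied by the decay assumption. An alternative is variation of constants, $\widehat u=Ae^{|k|x_3}+Be^{-|k|x_3}$ with variable $A,B$; this gives the same identity directly.
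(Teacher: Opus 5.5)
Your proposal is correct and follows the paper's route. You reduce by Fourier series in $x$ to the half-line ODE $(\partial_{x_3}^2-|k|^2)\widehat u=\widehat b$, solve it explicitly (your image-method Green's function is the closed form of the variation-of-constants solution the paper invokes), and evaluate $\partial_{x_3}\widehat u$ at $x_3=0$. Your hedge on the zero mode is also warranted: for $k=0$ existence requires the compatibility condition $\widehat g(0)=-\int_{-\infty}^0 y_3\,\widehat b(0,y_3)\,dy_3$, which neither the statement nor the paper's proof mentions, but this does not affect the formula for $k\neq0$.
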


\begin{proof}
Taking the Fourier series in $x$ reduces \eqref{eq:Poisson_halfspace_DN} to
\[
(\partial_{x_3}^2-|k|^2)\widehat u(k,x_3)=\widehat b(k,x_3),\qquad \widehat u(k,0)=\widehat g(k),
\qquad \widehat u(k,x_3)\to 0\ \text{as }x_3\to-\infty,
\]
which is solved explicitly by variation of constants. Differentiating at $x_3=0$
gives \eqref{eq:DN_formula_mode}. A detailed derivation can be found for instance in
\cite[Lemma A.1]{Granero-Scrobogna,GraneroBelinchonScrobogna2019}.
\end{proof}

\subsection{Organization of the manuscript}
The manuscript is organized as follows. In Section~\ref{sec:setting} we introduce the full three-dimensional Euler--plate system, discuss the geometric bending energy and the associated structural forces, and rewrite the problem in potential variables. We then nondimensionalize the equations and derive an Arbitrary Lagrangian--Eulerian (ALE) formulation on a fixed reference domain. In Section~3 we perform a small-steepness asymptotic expansion of the ALE system and obtain, up to quadratic order, two bidirectional nonlocal interface models for the surface displacement. In Section~4 we derive two one-dimensional unidirectional reductions capturing one-way propagation while retaining the leading dispersive and dissipative effects. In Section~\ref{Wp:bi} we prove local well-posedness for the first of the bidirectional model for small $H^3$ initial data. In Section~\ref{Wp:uni} we establish well-posedness results for the unidirectional models, including local well-posedness for arbitrary data and global well-posedness for small data. Finally, Appendix~\ref{appendix:geometry} collects geometric identities for graph surfaces, and Appendix~\ref{appendixB:non-dimensional} contains the details of the non-dimensionalisation.

	\section{Setting of the problem}
	\label{sec:setting}
	
	\subsection{The fluid structure interaction system}
	In this section we present a complete formulation of the
	fluid--structure interaction system under consideration.
	The model consists of a three--dimensional incompressible,
	inviscid fluid evolving below a deformable elastic surface. The fluid occupies the time-dependent domain
	\begin{equation}
	\Omega(t)= \{(x_1,x_2,x_3)\in\mathbb{R}^{3}: -L \pi <x_{1},x_{2}<L\pi, \quad x_3 < \eta(t,x_1,x_2), \quad t\in[0,T]\},
	\end{equation}
	where $L>0$ and the elastic free--boundary surface occupied by the plate is represented as the graph
	\begin{equation}
	\Gamma(t)= \{(x_1,x_2,\eta(x_{1},x_{2},t)): (x_{1},x_{2})\in L\mathbb{S}^{1}\times L\mathbb{S}^{1},  \quad t\in[0,T]\},
	\end{equation}
	The fluid is assumed to be incompressible, inviscid and of constant density $\rho_{f}>0$.  Therefore, velocity $u=(u_1,u_2,u_3)$ and pressure $p$ in the fluid region satisfy the incompressible Euler equations
	\begin{subequations}\label{eq:Euler}
\begin{align}
\rho_f\big(\partial_t u+(u\cdot\nabla)u\big)
&= -\nabla p-\rho_f g\,e_3
&& \text{in }\Omega(t), \label{eq:Euler:1}\\
\nabla\cdot u
&=0
&& \text{in }\Omega(t). \label{eq:Euler:2}
\end{align}
\end{subequations}

	where $g>0$ denotes gravity and $e_{3}=(0,0,1)$. Since the domain extends infinitely deep, we impose the 
	decay condition
	\begin{equation}
	u(t,x_1,x_2,x_3) \to 0,
	\qquad
	p(t,x_1,x_2,x_3) \to p_\infty(t),
	\qquad x_3\to -\infty.
	\label{eq:decay-deep}
	\end{equation}
	Moreover, the fluid is assumed to be periodic in the horizontal directions, i.e., for $k=(k_{1},k_{2})\in\mathbb{Z}^{2}$ we impose
	\begin{equation}
	u(t,x_{1}+2\pi L k_{1},\, x_{2}+2\pi L k_{2},\, x_{3})
	= u(t,x_{1},x_{2},x_{3}),
	\qquad t\in[0,T],
	\label{eq:periodicity-u}
	\end{equation}
	and analogously for the pressure
	\begin{equation}
	p(t,x_{1}+2\pi L k_{1},\, x_{2}+2\pi L k_{2},\, x_{3})
	= p(t,x_{1},x_{2},x_{3}).
	\label{eq:periodicity-p}
	\end{equation}
	
	\subsection{Boundary conditions, geometric bending energy and structural forces}
	The motion of the free boundary $\Gamma(t)$ is governed by two conditions: a kinematic condition expressing that the interface is transported by the fluid, and a dynamic condition expressing balance of normal stresses.
	
	\subsubsection*{The kinematic boundary condition}
	Since the free surface consists of material particles of the fluid, its normal velocity must coincide with that of the fluid. Noticing that the upward unit normal to $\Gamma(t)$ is given by
	\[
	n(x,t)
	= \frac{(-\nabla_{x}\eta(x,t),\,1)^{T}}{\sqrt{1+|\nabla_{x}\eta(x,t)|^{2}}}, \quad x=(x_{1},x_{2}),
	\]
	
	we have that
	\begin{equation}
	\partial_{t}\eta
	= u\cdot
	\sqrt{1+|\nabla_{x}\eta|^{2}}\, n, \mbox { on } \Gamma(t)
	\label{eq:kinematic}
	\end{equation}
	
	\subsubsection*{Geometric bending energy, inertia, and damping}
	
	Before formulating the dynamic boundary condition, we describe the forces
	acting on the elastic plate. These consist of (i) a nonlinear bending
	force obtained as the first variation of a curvature-dependent energy,
	(ii) an inertial response arising from the mass density of the plate, and
	(iii) a dissipative contribution modelling structural damping. Together
	these terms form the structural side of the evolution equation on the
	free boundary.
	
	Following the hydroelastic shell model of Plotnikov and Toland
	\cite{PlotnikovToland}, we assume that the restoring elastic force derives
	from a curvature-based bending energy of the form
	\begin{equation}
	E_b[\eta]
	= \int_{L\mathbb{S}^{1}\times L\mathbb{S}^{1}}
	W_b\big(H(\eta)\big)\,
	\sqrt{1+|\nabla_{x}\eta|^2}\,dx,
	\label{eq:bending-general}
	\end{equation}
	where $H(\eta)$ denotes the mean curvature of the graph $\Gamma(t)$ and 
	$W_b$ is a prescribed bending density. In the context of hydroelasticity,
	membrane strains are typically negligible compared to bending, and the
	energy depends solely on curvature. A widely used and physically relevant
	choice is
	\[
	W_b(H)=\frac{B}{2}H^2,
	\]
	which leads to the Willmore-type energy
	\begin{equation}
	E_b[\eta]
	= \int_{L\mathbb{S}^{1}\times L\mathbb{S}^{1}}
	\frac{B}{2} H(\eta)^2\,\sqrt{1+|\nabla_{x}\eta|^2}\,dx.
	\label{eq:Eb}
	\end{equation}
	
	The elastic force associated with this energy is obtained from its first
	variation. As shown in \cite{PlotnikovToland}, one obtains the fourth-order
	nonlinear geometric operator
	\begin{equation}
	\mathcal{E}(\eta)
	= \frac{B}{2}\Delta_\Gamma H(\eta)
	+ B H^3(\eta) - BH(\eta)K(\eta),
	\label{eq:E-operator}
	\end{equation}
	where $\Delta_\Gamma$ is the Laplace--Beltrami operator and $K$ is the
	Gauss curvature of the evolving surface. The operator retains the full
	geometric nonlinearity of the plate and depends explicitly on $\eta$,
	$\nabla_{x}\eta$, and $\nabla^2_{x}\eta$. For convenience, explicit expressions
	for $H$, $K$, $\Delta_\Gamma$, and the metric coefficients in terms of
	$\eta$ are collected in Appendix~\ref{appendix:geometry}. \medskip
	
	In addition to curvature-driven elasticity, the plate possesses inertia:
	with mass density $\rho_s h$ per unit area, vertical acceleration
	produces the inertial force
	\begin{equation}
	\rho_s h\,\partial_{tt}\eta.
	\label{eq:inertia}
	\end{equation}
	Here $\rho_s$ denotes the volumetric density of the solid material and
	$h>0$ is the (constant) thickness of the plate, so that $\rho_s h$ is the
	mass per unit area of the structure. This hyperbolic term reflects the
	kinetic response of the elastic sheet and plays a central role in the
	time-dependent hydroelastic dynamics. Finally, structural or viscoelastic damping may be present. A standard
	and analytically convenient choice is Kelvin--Voigt damping, modelled by
	\begin{equation}
	\mathcal{D}(\eta_t)=-\gamma\, \partial_{t}\Delta_{x}\eta,
	\qquad \gamma\ge 0,
	\label{eq:damping}
	\end{equation}
	corresponding to a Rayleigh dissipation potential $\mathcal{R}(\eta)=
	\frac{\gamma}{2}\int | \partial_{t}\nabla_{x}\eta|^2\,dx$. This term introduces
	high-order dissipation compatible with the geometric structure of the
	bending operator and appears naturally in Koiter-type plate models
	\cite{BKMT2024}. \medskip
	
	These three contributions combine to give the complete structural response
	of the plate: the nonlinear bending operator $\mathcal{E}(\eta)$ in
	\eqref{eq:E-operator}, the inertial force $\rho_s h\,\eta_{tt}$ in
	\eqref{eq:inertia}, and the damping term $\mathcal{D}(\eta_t)$ in
	\eqref{eq:damping}. Together they form the left-hand side of the evolution
	equation on the free boundary, and their balance with the normal fluid
	traction yields the dynamic boundary condition on $\Gamma(t)$ considered
	in the next subsection.

	\subsubsection*{Dynamic boundary condition: balance of normal forces}
	
	With the elastic, inertial, and damping forces identified, we now state
	the dynamic boundary condition on the free surface.
	For an inviscid, incompressible fluid, the Cauchy stress tensor is
	$\sigma_f=-pI$.
	The traction exerted by the fluid across the interface with outward unit
	normal $n$ is therefore $\sigma_f n = -p\,n$.
	
	In the plate equation, however, the relevant quantity is the normal force
	exerted by the fluid on the structure. By Newton’s third law, this force is
	the opposite of the traction exerted by the structure on the fluid.
	Consequently, the normal fluid force acting on the plate is
	\[
	T_{\mathrm{fluid}}
	= -(\sigma_f n)\cdot n = p,
	\qquad \text{on } \Gamma(t).
	\]
	
	Balancing this fluid force with the inertial, elastic, and damping
	contributions of the plate yields Newton’s second law on the moving
	boundary:
	\begin{equation}
	\rho_s h\,\partial_{tt}\eta
	+ \mathcal{E}(\eta)
	- \gamma\,\partial_t\Delta_{x}\eta
	= p
	\quad \text{on } \Gamma(t).
	\label{eq:dynamic}
	\end{equation}
	
	Equation~\eqref{eq:dynamic} constitutes the fully nonlinear dynamic
	boundary condition governing the evolution of the elastic surface.

	\subsection{The full Euler–plate system and dimensionless system}
	Equation~\eqref{eq:dynamic}, together with the kinematic
	condition~\eqref{eq:kinematic} and the Euler equations~\eqref{eq:Euler},
	completes the formulation of the fluid--structure interaction problem. For
	convenience, we collect the full system below:
	\begin{subequations}\label{eq:FSI_system}
\begin{align}
\rho_{f}\left(\partial_t u + (u\cdot\nabla)u \right)
&= - \nabla p  -\rho_{f} g\, e_3
&& \text{in }\Omega(t), \label{eq:FSI:1}\\
\nabla\cdot u
&= 0
&& \text{in }\Omega(t), \label{eq:FSI:2}\\
\partial_{t}\eta
&= u\cdot \sqrt{1+|\nabla_{x}\eta|^{2}}\, n
&& \text{on }\Gamma(t), \label{eq:FSI:3}\\
\rho_s h\,\partial_{tt}\eta
+ \mathcal{E}(\eta)
- \gamma\,  \partial_{t}\Delta_{x}\eta
&= p
&& \text{on }\Gamma(t). \label{eq:FSI:4}
\end{align}
\end{subequations}
	where $\mathcal{E}(\eta)$ is given in \eqref{eq:E-operator}. Assuming that the flow is irrotational, we introduce a potential $\phi$
	such that $u=\nabla\phi$   and denote by $\psi(x_{1},x_{2},t)= \phi(t,x_{1},x_{2},\eta(x_{1},x_{2},t))$ the surface potential.  
	Taking the trace of the Bernoulli relation
	\[
	p = -\rho_f\big( \partial_{t}\phi+ \tfrac12|\nabla\phi|^2 + g x_3\big)
	\quad\text{in }\Omega(t),
	\]
	yields
	\[
	p
	= -\rho_f\Big( \partial_{t}\phi + \tfrac12|\nabla\phi|^2 + g\,\eta\Big)
	\quad \mbox{on } \Gamma(t).
	\]
	Moreover, using the chain rule, we can write
	\[ \partial_{t}\psi=\partial_{t}\phi + \partial_{x_3}\phi\partial_{t}\eta , \quad \mbox{on } \Gamma(t). \]
	and using the kinematic boundary condition in  \eqref{eq:FSI:3},
	\[ \partial_{t}\psi=\partial_{t}\phi + \partial_{x_3}\phi\left(\nabla\phi\cdot
	\sqrt{1+|\nabla_{x}\eta|^{2}}\, n\right) , \quad \mbox{on } \Gamma(t). \]
	Thus, the fluid--structure interaction system admits the following potential formulation
\begin{subequations}\label{eq:potential_system}
\begin{align}
\Delta\phi &= 0 && \text{in }\Omega(t), \label{eq:potential:1}\\
\phi &= \psi && \text{on }\Gamma(t), \label{eq:potential:2}\\
\partial_{t}\eta
&= \nabla\phi\cdot\sqrt{1+|\nabla_{x}\eta|^2}\,n
&& \text{on }\Gamma(t), \label{eq:potential:3}\\
\partial_t \psi
&= -\frac{\rho_s h}{\rho_f}\, \partial_{t}^2\eta
- \frac{1}{\rho_f}\,\mathcal{E}(\eta)
+\frac{\gamma}{\rho_f}\, \partial_{t}\Delta_{x}\eta \notag\\
&\quad
+ \partial_{x_3}\phi\big(\nabla\phi\cdot
\sqrt{1+|\nabla_{x}\eta|^{2}}\, n\big)
- \frac12|\nabla\phi|^2
- g\eta
&& \text{on }\Gamma(t). \label{eq:potential:4}
\end{align}
\end{subequations}
	where we recall that $\mathcal{E}(\eta)$ is given in \eqref{eq:E-operator}. 
	
	\begin{remark}[General bending energies]
		\label{rmk:general-bending}
		The structure of the model remains unchanged for general
		bending energies of the form
		\[
		E_b[\eta]
		= \int_{L\mathbb{S}^{1}\times L\mathbb{S}^{1}}
		W_b(H(\eta)),\sqrt{1+|\nabla_{x}\eta|^2}\,dx,
		\]
		with $W_b$ smooth.   The corresponding elastic operator is
		\[
		\mathcal{E}(\eta)
		=
		\frac12\Delta_\Gamma W_b'(H)
		+ 2H(HW_b'(H) - W_b(H))
		- K W_b'(H).
		\]
		The choice $W_b(H)=\frac{B}{2}H^2$ is made for physical
		and analytic clarity, but more general models arise in
		Cosserat shell theory.
	\end{remark}
	
	\begin{remark}[Dependence on the Gauss curvature]
		Bending energies depending on both $H$ and $K$ can also be
		considered:
		\[
		E_b[\eta]
		=
		\int_{L\mathbb{S}^{1}\times L\mathbb{S}^{1}}		W(H,K)\,\sqrt{1+|\nabla_{x}\eta|^2}\,dx.
		\]
		Such energies lead to additional terms involving the
		variation of $K$ and may require higher regularity.
		These appear in the literature on elastic shells and
		biomembranes, though we do not pursue them here.
	\end{remark}
	
	\subsubsection*{Dimensionless quantities and system}In order to derive the asymptotic models, we provide the non-dimensionalization of the system \eqref{eq:potential:1}-\eqref{eq:potential:4}. Let $L$ denote the typical horizontal wavelength and $H$ the typical
	vertical amplitude of the elastic sheet. We introduce
	the dimensionless spatial and temporal variables 
	\[
	x = L\,\tilde x, \qquad x_3 = L\,\tilde x_3, \qquad
	t = \sqrt{\frac{L}{g}}\,\tilde t,
	\]
	and the non-dimensional surface displacement, potential and surface potential
	\[
	\eta(x,t)=H\,\tilde\eta(\tilde x,\tilde t),\qquad
	\phi(x,x_3,t)=H\sqrt{gL}\,\tilde\phi(\tilde x,\tilde x_3,\tilde t),\qquad
	\psi(x,t)=H\sqrt{gL}\,\tilde\psi(\tilde x,\tilde t).
	\]
	Accordingly, we define the non-dimensionalized fluid domain 
	\[
	\widetilde{\Omega}(t)
	= \{(\tilde x_1,\tilde x_2,\tilde x_3)\in\mathbb{R}^3:
	-\pi<\tilde x_1,\tilde x_2<\pi,\; \tilde x_3<\varepsilon\,\tilde \eta(\tilde x,\tilde t), \quad t\in [0,T]\},
	\]
	and free surface
	\[
	\tilde \Gamma(t)
	= \{(\tilde x_1,\tilde x_2,\varepsilon\,  \eta(\tilde x,\tilde t)):
	-\pi<\tilde x_1,\tilde x_2<\pi,\quad t\in [0,T]\}.
	\]
	Here we introduce the nondimensional parameters
	\[
	\Upsilon := \frac{\rho_s h}{\rho_f L},\qquad
	\beta := \frac{HB}{\rho_f g L^4},\qquad
	\delta := \frac{\gamma}{\rho_f\sqrt{gL^3}},\qquad
	\varepsilon := \frac{H}{L}.
	\]

	The non-dimensionalisation of the potential formulation
\eqref{eq:potential:1}--\eqref{eq:potential:4} is carried out in detail in
Appendix~\ref{appendixB:non-dimensional}. In particular, after introducing the
dimensionless variables and parameters and dropping tildes for notational
simplicity, the fluid--structure system takes the form
	\begin{align}
	& \Delta\phi = 0 &&\text{in }\Omega(t),\label{maineq1}\\[0.3ex]
	& \phi=\psi &&\text{on }\Gamma(t),\label{maineq2}\\[0.3ex]
	& \partial_{t}\eta
	= \nabla\phi\cdot (-\varepsilon\nabla_{x}\eta,1)
	&&\text{on }\Gamma(t),\label{maineq3}\\[0.3ex]
	& \partial_{t}\psi
	= -\Upsilon\,\partial_{tt}\eta
	-\beta\,\mathcal{E}(\eta;\varepsilon)
	+ \delta\,\partial_{t}\Delta_{x}\eta 
	+ \varepsilon\,\partial_{x_3}\phi
	\left(\nabla\phi\cdot
	(-\varepsilon\nabla_{x}\eta,1)\right)
	- \frac{\varepsilon}{2}|\nabla\phi|^2
	- \eta,
	&&\text{on }\Gamma(t).\label{maineq4}
	\end{align}
	The
	dimensionless operator $\mathcal{E}(\eta;\varepsilon)$ is given
	explicitly by
	\[
	\mathcal{E}(\eta;\varepsilon)
	= \frac12\,\mathcal{L}_\Gamma(\eta;\varepsilon)
	+ \varepsilon^2\Big(
	\mathcal{H}(\eta;\varepsilon)^3
	- \mathcal{H}(\eta;\varepsilon)\,
	\mathcal{K}(\eta;\varepsilon)
	\Big),
	\]
	where
	\[
	\mathcal{L}_\Gamma(\eta;\varepsilon)
	= \frac{1}{\sqrt{\alpha}}\,
	\partial_{x_i}
	\left(
	\sqrt{\alpha}\,\alpha^{ij}\,
	\partial_{x_j}\mathcal{H}(\eta;\varepsilon)
	\right),
	\]
	and
	\[
	\mathcal{H}(\eta;\varepsilon)
	= \frac{1}{2\sqrt{\alpha}}
	\Big(
	\alpha^{11}\eta_{x_1x_1}
	+ 2\alpha^{12}\eta_{x_1x_2}
	+ \alpha^{22}\eta_{x_2x_2}
	\Big), \quad \mathcal{K}(\eta;\varepsilon)
	= \frac{
		\eta_{x_1x_1}\eta_{x_2x_2}
		- \eta_{x_1x_2}^2
	}{\alpha^2}.
	\]
	
	\[
	\alpha = 1+\varepsilon^2|\nabla_{x}\eta|^2,
	\qquad
	\alpha^{11}=\frac{1+\varepsilon^2\eta_{x_2}^2}{\alpha},\quad
	\alpha^{22}=\frac{1+\varepsilon^2\eta_{x_1}^2}{\alpha},\quad
	\alpha^{12}=-\,\frac{\varepsilon^2\,\eta_{x_1}\eta_{x_2}}{\alpha}.
	\]
	
	\subsection{The Arbitrary Lagrangian-Eulerian (ALE) formulation}
	In this section, our goal is to transform the free-boundary problem into a
	fixed reference domain
	\[\Omega=\mathbb{T}^{2}\times (-\infty,0), \quad \Gamma=\mathbb{T}^{2}\times\{ 0\}.\]
	To this end, we introduce a time-dependent
	diffeomorphism
	\[
	\Psi(\cdot,t):\Omega \longrightarrow \Omega(t),
	\]
	defined by
	\[
	\Psi(x_1,x_2,x_3,t)
	=
	\big(x_1,\;x_2,\;x_3+\varepsilon\,\eta(x_1,x_2,t)\big).
	\]
	This mapping straightens the moving free surface
	\(x_3=\varepsilon\eta(x_1,x_2,t)\) into the fixed boundary \(x_3=0\).
	
	Given a scalar function \(f\) defined on \(\Omega(t)\), we define its ALE
	counterpart on the reference domain \(\Omega\) by composition with \(\Psi\),
	that is,
	\[
	F = f\circ\Psi.
	\]
	In particular, we introduce the ALE potential
	\[
	\Phi = \phi \circ \Psi.
	\]
	
	We now compute the gradient of the diffeomorphism \(\Psi\). A direct
	calculation yields
	\[
	\nabla \Psi
	=
	\begin{pmatrix}
	1 & 0 & 0\\
	0 & 1 & 0\\
	\varepsilon\,\partial_{x_1}\eta(x,t) &
	\varepsilon\,\partial_{x_2}\eta(x,t) &
	1
	\end{pmatrix}, \quad  (\nabla\Psi)^{-1}
	=
	\begin{pmatrix}
	1 & 0 & 0\\
	0 & 1 & 0\\
	-\varepsilon\,\partial_{x_1}\eta(x,t) &
	-\varepsilon\,\partial_{x_2}\eta(x,t) &
	1
	\end{pmatrix}.
	\]
	
	Adopting the Einstein summation convention and defining the ALE pullback
	\(F := f \circ \Psi\), the chain rule yields
	\[
	(\partial_{x_j} f)\circ\Psi = A_j^{\,k}\,\partial_{x_k}F,
	\qquad A=(\nabla\Psi)^{-1}.
	\]
	Consequently,
	\[
	(\Delta f)\circ\Psi
	= A_j^{\,i}\,\partial_{x_i}\!\left(A_j^{\,k}\,\partial_{x_k}F\right).
	\]
	
	Using the explicit form of \(A\) and expanding up to first order in
	\(\varepsilon\), we obtain
	\[
	(\Delta f)\circ\Psi
	= \Delta F
	- \varepsilon\Big[
	(\eta_{x_1x_1}+\eta_{x_2x_2})\,\partial_{x_3}F
	+ \eta_{x_1}(\partial_{x_3x_1}F+\partial_{x_1x_3}F)
	+ \eta_{x_2}(\partial_{x_3x_2}F+\partial_{x_2x_3}F)
	\Big]
	+ \mathcal{O}(\varepsilon^2).
	\]
	
	Finally, the composition of equations (\ref{maineq1})-(\ref{maineq4}) the system with the diffeomorphism reads to
	
	\begin{align}
	& \Delta\Phi
	-\varepsilon\Big[
	(\eta_{x_1x_1}+\eta_{x_2x_2})\,\Phi_{x_3}
	+\eta_{x_1}\big(\Phi_{x_3x_1}+\Phi_{x_1x_3}\big)
	+\eta_{x_2}\big(\Phi_{x_3x_2}+\Phi_{x_2x_3}\big)
	\Big]
	=\mathcal{O}(\varepsilon^2)
	&&\text{in }\Omega, \label{ALEsys1}\\[0.4ex]
	& \Phi=\psi
	&&\text{on }\Gamma, \label{ALEsys2}\\[0.4ex]
	& \partial_t \eta
	= \Phi_{x_3}-\varepsilon\big(\eta_{x_1}\Phi_{x_1}+\eta_{x_2}\Phi_{x_2}\big)
	+\mathcal{O}(\varepsilon^2)
	&&\text{on }\Gamma, \label{ALEsys3}\\[0.4ex]
	& \partial_t\psi
	= -\Upsilon\,\partial_{tt}\eta
	-\frac{\beta}{4}\,\Delta_{x}^{2}\eta
	+ \delta\,\partial_{t}\Delta_{x}\eta
	-\eta
	+\frac{\varepsilon}{2}\Big(\Phi_{x_3}^2-|\nabla_x\Phi|^2\Big)
	+\mathcal{O}(\varepsilon^2)
	&&\text{on }\Gamma. \label{ALEsys4}
	\end{align}\begin{remark}[Leading--order reduction of the bending operator]
		At leading order in the small--steepness parameter $\varepsilon$, the
		geometric bending operator $\mathcal{E}(\eta;\varepsilon)$ reduces to a
		linear biharmonic operator. Indeed,
		\[
		\alpha = 1+\mathcal{O}(\varepsilon^2), \qquad
		\alpha^{ij}=\delta_{ij}+\mathcal{O}(\varepsilon^2),
		\]
		so that
		\[
		\mathcal{H}(\eta;\varepsilon)
		= \tfrac12\,\Delta_x\eta + \mathcal{O}(\varepsilon^2),
		\qquad
		\mathcal{E}(\eta;\varepsilon)
		= \tfrac14\,\Delta_x^2\eta + \mathcal{O}(\varepsilon^2).
		\]
		Accordingly, in equation \eqref{ALEsys4} the elastic response of the plate is governed by a biharmonic operator, while nonlinear geometric effects are encapsulated in the $\mathcal{O}(\varepsilon^2)$ terms.
	\end{remark}
	
	\section{Formal asymptotic expansion}
	
	We now derive a hierarchy of approximate models by performing a formal
	small--steepness expansion in the parameter $\varepsilon$ in the ALE system
	\eqref{ALEsys1}--\eqref{ALEsys4}. Specifically, we seek $\Phi$, $\psi$ and
	$\eta$ as power series in $\varepsilon$ of the form
	\begin{align}
	\Phi(x_1,x_2,x_3,t) &= \sum_{n=0}^{\infty}\varepsilon^{n}\,\Phi^{(n)}(x_1,x_2,x_3,t), 
	\label{eq:ansatz-Phi}\\
	\psi(x_1,x_2,t) &= \sum_{n=0}^{\infty}\varepsilon^{n}\,\psi^{(n)}(x_1,x_2,t),
	\qquad
	\eta(x_1,x_2,t) = \sum_{n=0}^{\infty}\varepsilon^{n}\,\eta^{(n)}(x_1,x_2,t),
	\label{eq:ansatz-psi-eta}
	\end{align}

	Substituting \eqref{eq:ansatz-Phi}--\eqref{eq:ansatz-psi-eta} into
	\eqref{ALEsys1}--\eqref{ALEsys4} and collecting terms of equal powers of
	$\varepsilon$ yields, at each order $n\ge 0$, a linear boundary value problem
	for $\Phi^{(n)}$ in $\Omega$ coupled to evolution equations for
	$\psi^{(n)}$ and $\eta^{(n)}$ on $\Gamma=\{x_3=0\}$. In particular, the leading
	order $n=0$ provides the principal (linear) hydroelastic model, while $n=1$
	captures the first nonlinear correction. \medskip
	
	The initial data are expanded consistently as
	\[
	\Phi(x,x_3,0)=\sum_{n=0}^{\infty}\varepsilon^{n}\,\Phi^{(n)}(x,x_3,0),\qquad
	\psi(x,0)=\sum_{n=0}^{\infty}\varepsilon^{n}\,\psi^{(n)}(x,0),\qquad
	\eta(x,0)=\sum_{n=0}^{\infty}\varepsilon^{n}\,\eta^{(n)}(x,0),
	\]
	so that $\Phi^{(n)}(\cdot,\cdot,0)$, $\psi^{(n)}(\cdot,0)$ and
	$\eta^{(n)}(\cdot,0)$ represent the $n$-th order initial profiles in the
	expansion.
	
	\subsubsection*{\underline{The case $n=0$}} We substitute the formal expansions \eqref{eq:ansatz-Phi}-\eqref{eq:ansatz-psi-eta}
	into the ALE system \eqref{ALEsys1}--\eqref{ALEsys4} and retaining only the leading-order contributions yields the following system for $(\Phi^{(0)},\psi^{(0)},\eta^{(0)})$:
	\begin{align}
	&\Delta \Phi^{(0)}=0
	&&\text{in }\Omega, \label{eq:n0-1}\\[0.4ex]
	&\Phi^{(0)}=\psi^{(0)}
	&&\text{on }\Gamma, \label{eq:n0-2}\\[0.4ex]
	&\partial_t \eta^{(0)}=\partial_{x_3}\Phi^{(0)}
	&&\text{on }\Gamma, \label{eq:n0-3}\\[0.4ex]
	&\partial_t\psi^{(0)}
	=-\Upsilon\,\partial_{tt}\eta^{(0)}
	-\frac{\beta}{4}\,\Delta_x^2\eta^{(0)}
	+\delta\,\partial_t\Delta_x\eta^{(0)}
	-\eta^{(0)}
	&&\text{on }\Gamma. \label{eq:n0-4}
	\end{align}
	
	Taking the Fourier series in $x\in\TT^2$, the unique decaying solution of
	\eqref{eq:n0-1}--\eqref{eq:n0-2} is
	\[
	\widehat{\Phi^{(0)}}(k,x_3,t)=\widehat{\psi^{(0)}}(k,t)\,e^{|k|x_3},
	\qquad k\in\ZZ^2,\quad x_3\le 0,
	\]
	and in particular
	\[
	\partial_{x_3}\Phi^{(0)}\big|_{\Gamma}=\Lambda\,\psi^{(0)}.
	\]
	Therefore, the kinematic condition \eqref{eq:n0-3} becomes
	\begin{equation}
	\partial_t \eta^{(0)}=\Lambda\,\psi^{(0)} \qquad \text{on }\Gamma.
	\label{eq:n0-kin-Lambda}
	\end{equation}
	Differentiating \eqref{eq:n0-kin-Lambda} in time and using \eqref{eq:n0-4}
	yields a closed equation for $\eta^{(0)}$:
	\begin{equation}
	\partial_{tt}\eta^{(0)}
	= \Lambda\Big(
	-\Upsilon\,\partial_{tt}\eta^{(0)}
	-\frac{\beta}{4}\,\Delta_x^2\eta^{(0)}
	+\delta\,\partial_t\Delta_x\eta^{(0)}
	-\eta^{(0)}
	\Big),
	\qquad \text{on }\Gamma.
	\label{eq:n0-eta-closed}
	\end{equation}
	Equivalently,
	\begin{equation}
	\big(I+\Upsilon\Lambda\big)\partial_{tt}\eta^{(0)}
	+\frac{\beta}{4}\,\Lambda\Delta_x^2\eta^{(0)}
	-\delta\,\Lambda\,\partial_t\Delta_x\eta^{(0)}
	+\Lambda\eta^{(0)}=0.
	\label{eq:n0-eta-closed2}
	\end{equation}
	
	Let $\widehat{\eta}^{(0)}(k,t)$ denote the Fourier coefficient of $\eta^{(0)}$.
	For $k\neq0$, using $\widehat{\Lambda f}(k)=|k|\widehat f(k)$ and
	$\widehat{\Delta_x f}(k)=-|k|^2\widehat f(k)$, equation \eqref{eq:n0-eta-closed} becomes
	\begin{equation}
	(1+\Upsilon|k|)\,\partial_t^2\widehat{\eta}^{(0)}(k,t)
	+\delta\,|k|^3\,\partial_t\widehat{\eta}^{(0)}(k,t)
	+\Big(|k|+\frac{\beta}{4}|k|^5\Big)\widehat{\eta}^{(0)}(k,t)=0.
	\label{eq:n0FourierODE}
	\end{equation}
	\subsubsection*{\underline{The case $n=1$}} Similarly, in the case $n=1$ the leading-order contributions yields the following system for $(\Phi^{(1)},\psi^{(0)},\eta^{(1)})$:
	\begin{align}
	& \Delta\Phi^{(1)}
	=\Big[
	(\eta^{(0)}_{x_1x_1}+\eta^{(0)}_{x_2x_2})\,\Phi^{(0)}_{x_3}
	+\eta^{(0)}_{x_1}\big(\Phi^{(0)}_{x_3x_1}+\Phi^{(0)}_{x_1x_3}\big)
	+\eta^{(0)}_{x_2}\big(\Phi^{(0)}_{x_3x_2}+\Phi^{(0)}_{x_2x_3}\big)
	\Big]
	&&\text{in }\Omega, \label{ALEsys1}\\[0.4ex]
	& \Phi^{(1)}=\psi^{(1)}
	&&\text{on }\Gamma, \label{ALEsys2}\\[0.4ex]
	& \partial_t \eta^{(1)}
	= \Phi^{(1)}_{x_3}-\big(\eta^{(0)}_{x_1}\Phi^{(0)}_{x_1}+\eta^{(0)}_{x_2}\Phi^{(0)}_{x_2}\big)
	&&\text{on }\Gamma, \label{ALEsys3}\\[0.4ex]
	& \partial_t\psi^{(1)}
	= -\Upsilon\,\partial_{tt}\eta^{(1)}
	-\frac{\beta}{4}\,\Delta_{x}^{2}\eta^{(1)}
	+ \delta\,\partial_{t}\Delta_{x}\eta^{(1)}
	-\eta^{(1)}
	+\frac{1}{2}\Big((\Phi^{(0)}_{x_3})^2-|\nabla_x\Phi^{(0)}|^2\Big)
	&&\text{on }\Gamma. \label{ALEsys4}
	\end{align}

	A convenient form for the bulk source term is to write
	\[ b(x_1,x_2,x_3,t)= \Delta_{x}\eta^{(0)}\partial_{x_3}\Phi^{(0)}+2\nabla_{x}\eta^{(0)}\cdot\nabla_{x}\partial_{x_3}\Phi^{(0)}.\]
	Therefore, by using Lemma \ref{lem:Poisson} we can compute 
	\begin{equation}\label{formula:poisson}
	\partial_{x_3}\widehat{\Phi^{(1)}}(k,0,t)=|k|\widehat{\psi^{(1)}}(k,t)+\int_{-\infty}^0\widehat{b}(k,y_3,t)e^{|k|y_3}dy_3, \quad k=(k_{1},k_{2})\in \mathbb{Z}^{2}.
	\end{equation}
	In order to provide an explicit expression for the previous integral term, we first notice that
	\begin{align}
\widehat b(k,x_3)
&=
\sum_{m\in\Z^2}
\widehat{\Delta_x\eta^{(0)}}(k-m)\,\widehat{\partial_3\Phi^{(0)}}(m,x_3)
+
2\sum_{m\in\Z^2}\widehat{\nabla\eta^{(0)}}(k-m)\cdot \widehat{\nabla\partial_3\Phi^{(0)}}(m,x_3)
\nonumber \\
&=
\sum_{m\in\Z^2}
\Big( -|k-m|^2 \Big)\Big(|m|\widehat\psi^{(0)}(m)e^{|m|x_3}\Big)\widehat\eta^{(0)}(k-m)
\nonumber \\
&\quad
+
2\sum_{m\in\Z^2}
\Big(i(k-m)\widehat\eta^{(0)}(k-m)\Big)\cdot \Big(i m\,|m|\widehat\psi^{(0)}(m)e^{|m|x_3}\Big) \nonumber 
\\
&=
-\sum_{m\in\Z^2}
|m|\big(|k|^2-|m|^2\big)\,
\widehat\eta^{(0)}(k-m)\,\widehat\psi^{(0)}(m)\,e^{|m|x_3}. \label{b:exp}
\end{align}

Thus, using \eqref{formula:poisson} and \eqref{b:exp}, we obtain

\begin{equation}\label{final:formula:comm}
\partial_{x_3}\Phi^{(1)}=\Lambda \psi^{(1)}-\comm{\Lambda}{\eta^{(0)}}\,\Lambda\psi^{(0)}
, \quad \mbox{ on } \Gamma.
\end{equation}
Combining \eqref{final:formula:comm} and  \eqref{ALEsys3}, the evolution equation for $\eta^{(1)}$ becomes
\begin{equation}\label{eq:eta:t}
\partial_t \eta^{(1)}
	= \Lambda\psi^{(1)}-\comm{\Lambda}{\eta^{(0)}}\,\Lambda\psi^{(0)}-\big(\eta^{(0)}_{x_1}\Phi^{(0)}_{x_1}+\eta^{(0)}_{x_2}\Phi^{(0)}_{x_2}\big), \quad \text{on }\Gamma,
\end{equation}
Differentiating \eqref{eq:eta:t} in time and using \eqref{ALEsys4} we deduce
\begin{align}
\partial_{tt} \eta^{(1)}
&= \Lambda\Big(
-\Upsilon\,\partial_{tt}\eta^{(1)}
-\frac{\beta}{4}\,\Delta_{x}^{2}\eta^{(1)}
+ \delta\,\partial_{t}\Delta_{x}\eta^{(1)}
-\eta^{(1)}
+\frac{1}{2}\big((\Lambda\psi^{(0)})^2-|\nabla_x\psi^{(0)}|^2\big)
\Big)
\label{eq:eta:tt}\\[0.3em]
&\quad
-\partial_{t}\,\comm{\Lambda}{\eta^{(0)}}\,\Lambda\psi^{(0)}
-\partial_{t}\Big(\eta^{(0)}_{x_1}\Phi^{(0)}_{x_1}+\eta^{(0)}_{x_2}\Phi^{(0)}_{x_2}\Big). \notag
\end{align}
Rearranging the terms and collecting all contributions involving $\eta^{(1)}$ on the left-hand side, we obtain
\begin{align}
\big(I+\Upsilon\Lambda\big)\partial_{tt}\eta^{(1)}
&+\frac{\beta}{4}\,\Lambda\Delta_x^2\eta^{(1)}
-\delta\,\Lambda\,\partial_t\Delta_x\eta^{(1)}
+\Lambda\eta^{(1)}
\label{eq:eta1_forced_rearranged}\\[0.3em]
&=
\frac12\,\Lambda\Big((\Lambda\psi^{(0)})^2-|\nabla_x\psi^{(0)}|^2\Big)
-\partial_t\!\left(\comm{\Lambda}{\eta^{(0)}}\,\Lambda\psi^{(0)}\right)
-\partial_t\Big(\eta^{(0)}_{x_1}\Phi^{(0)}_{x_1}+\eta^{(0)}_{x_2}\Phi^{(0)}_{x_2}\Big). \notag
\end{align}

Next, we eliminate $\psi^{(0)}$ in favour of $\eta^{(0)}$ using the relation \eqref{eq:n0-kin-Lambda}
On nonzero Fourier modes, this yields
\begin{equation}
\psi^{(0)}=\Lambda^{-1}\partial_t\eta^{(0)},
\qquad
\Lambda\psi^{(0)}=\partial_t\eta^{(0)},
\qquad
\nabla_x\psi^{(0)}=\nabla_x\Lambda^{-1}\partial_t\eta^{(0)}.
\label{eq:psi0-elimination}
\end{equation}
Therefore, \eqref{eq:eta1_forced_rearranged} can be rewritten as
\begin{align}
\big(I+\Upsilon\Lambda\big)\partial_{tt}\eta^{(1)}
&+\frac{\beta}{4}\,\Lambda\Delta_x^2\eta^{(1)}
-\delta\,\Lambda\,\partial_t\Delta_x\eta^{(1)}
+\Lambda\eta^{(1)}
\label{eq:eta1_forced_eta0}\\[0.3em]
&=
\frac12\,\Lambda\Big((\partial_t\eta^{(0)})^2-\big|\nabla_x\Lambda^{-1}\partial_t\eta^{(0)}\big|^2\Big)
-\partial_t\!\left(\comm{\Lambda}{\eta^{(0)}}\,\partial_t\eta^{(0)}\right)
\notag\\[0.3em]
&\quad
-\partial_t\!\Big(\nabla_x\eta^{(0)}\cdot \nabla_x\Lambda^{-1}\partial_t\eta^{(0)}\Big).
\notag
\end{align}

Combining the leading-order equation for $\eta^{(0)}$ with $\varepsilon$ times
\eqref{eq:eta1_forced_eta0}, and recalling that
\[
f=\eta^{(0)}+\varepsilon\eta^{(1)},
\]
we obtain, up to terms of order $\mathcal{O}(\varepsilon^2)$,
\begin{align}
\big(I+\Upsilon\Lambda\big)\partial_{tt}f
&+\frac{\beta}{4}\,\Lambda\Delta_x^2 f
-\delta\,\Lambda\,\partial_t\Delta_x f
+\Lambda f
\label{eq:f_equation_compact_rhs_eta0}\\[0.3em]
&=
\varepsilon\Bigg[
\frac12\,\Lambda\Big((\partial_t\eta^{(0)})^2-\big|\nabla_x\Lambda^{-1}\partial_t\eta^{(0)}\big|^2\Big)
\notag\\[0.3em]
&\qquad\qquad
-\partial_t\!\left(\comm{\Lambda}{\eta^{(0)}}\,\partial_t\eta^{(0)}\right)
-\partial_t\!\Big(\nabla_x\eta^{(0)}\cdot \nabla_x\Lambda^{-1}\partial_t\eta^{(0)}\Big)
\Bigg].
\nonumber
\end{align}

Since $\varepsilon f=\varepsilon \eta^{(0)}+\mathcal{O}(\varepsilon^{2})$, we may replace $\eta^{(0)}$ by $f$ in the
right-hand side at the cost of an $\mathcal{O}(\varepsilon^2)$ error. Hence, neglecting
$\mathcal{O}(\varepsilon^2)$ terms and recalling that $\Delta_{x}^{2}=\Lambda^{4}$ and $\mathcal R=\nabla_x\Lambda^{-1}$
the equation closes as
\begin{align}
\big(I+\Upsilon\Lambda\big) f_{tt}
&+\delta\,\Lambda^{3} f_t
+\Big(\Lambda+\frac{\beta}{4}\Lambda^{5}\Big)f
\nonumber \\
&=
\varepsilon\Bigg[
\frac12\,\Lambda\Big((\partial_t f)^2-\big|\mathcal R\partial_t f\big|^2\Big)
-\partial_t\!\left(\comm{\Lambda}{f}\,\partial_t f\right)
-\partial_t\!\Big(\nabla_x f\cdot \mathcal R\partial_t f\Big)
\Bigg].\label{eq:f_equation_compact_closed}
\end{align} \medskip

In the sequel we derive from \eqref{eq:f_equation_compact_closed} a different equation of the same order of precision, i.e., neglecting terms of order $\mathcal{O}(\varepsilon^{2})$. To that purpose, we start from \eqref{eq:eta1_forced_eta0} and expand the time derivatives in the
forcing. Using the product rule,
\[
\partial_t\!\big(\comm{\Lambda}{\eta^{(0)}}\,\partial_t\eta^{(0)}\big)
=
\comm{\Lambda}{\partial_t\eta^{(0)}}\,\partial_t\eta^{(0)}
+\comm{\Lambda}{\eta^{(0)}}\,\partial_{tt}\eta^{(0)},
\]
and
\[
\partial_t\!\Big(\nabla_x\eta^{(0)}\cdot \mathcal{R} \partial_t\eta^{(0)}\Big)
=
\nabla_x\partial_t\eta^{(0)}\cdot \mathcal{R}\partial_t\eta^{(0)}
+\nabla_x\eta^{(0)}\cdot \mathcal{R}\partial_{tt}\eta^{(0)}.
\]
Hence \eqref{eq:eta1_forced_eta0} may be rewritten as
\begin{align}
	\big(I+\Upsilon\Lambda\big)\partial_{tt}\eta^{(1)}
	&+\frac{\beta}{4}\,\Lambda\Delta_x^2\eta^{(1)}
	-\delta\,\Lambda\,\partial_t\Delta_x\eta^{(1)}
	+\Lambda\eta^{(1)}
	\label{eq:eta1_forced_eta0_expanded_Tversion}\\[0.3em]
	&=
	\frac12\,\Lambda\Big((\partial_t\eta^{(0)})^2-\big|\mathcal R\,\partial_t\eta^{(0)}\big|^2\Big)
	-\comm{\Lambda}{\partial_t\eta^{(0)}}\,\partial_t\eta^{(0)}
	\notag\\[0.3em]
	&\quad
	-\comm{\Lambda}{\eta^{(0)}}\,\partial_{tt}\eta^{(0)}
	-(\nabla_x\partial_t\eta^{(0)})\cdot \mathcal R\,\partial_t\eta^{(0)}
	-(\nabla_x\eta^{(0)})\cdot \mathcal R\,\partial_{tt}\eta^{(0)}.
	\notag
\end{align}

\medskip

We now eliminate $\partial_{tt}\eta^{(0)}$ by means of the leading-order closure
\eqref{eq:n0-eta-closed2}. Solving \eqref{eq:n0-eta-closed2} for $\partial_{tt}\eta^{(0)}$
gives
\begin{equation}
	\label{eq:eta0tt_elim_T}
	\partial_{tt}\eta^{(0)}
	=
	-(I+\Upsilon\Lambda)^{-1}\!\left(
	\frac{\beta}{4}\,\Lambda\Delta_x^2\eta^{(0)}
	-\delta\,\Lambda\,\partial_t\Delta_x\eta^{(0)}
	+\Lambda\eta^{(0)}
	\right).
\end{equation}
Moreover, introducing the Fourier multiplier
\begin{equation}
	\label{eq:def_T_multiplier_new}
	\mathcal T:=(I+\Upsilon\Lambda)^{-1}\Lambda.
\end{equation}
equation \eqref{eq:eta0tt_elim_T} becomes
\begin{equation}\label{eq:eta0tt_elim_T_compact}
	\partial_{tt}\eta^{(0)}
	=
	-\frac{\beta}{4}\,\mathcal T\Delta_x^2\eta^{(0)}
	+\delta\,\mathcal T\partial_t\Delta_x\eta^{(0)}
	-\mathcal T\eta^{(0)}.
\end{equation}

Substituting \eqref{eq:eta0tt_elim_T_compact} into \eqref{eq:eta1_forced_eta0_expanded_Tversion}
yields a forcing depending only on $\eta^{(0)}$ and $\partial_t\eta^{(0)}$:
\begin{align}
	\big(I+\Upsilon\Lambda\big)\partial_{tt}\eta^{(1)}
	&+\frac{\beta}{4}\,\Lambda\Delta_x^2\eta^{(1)}
	-\delta\,\Lambda\,\partial_t\Delta_x\eta^{(1)}
	+\Lambda\eta^{(1)}
	\label{eq:eta1_forced_eta0_no_eta0tt_T}\\[0.3em]
	&=
	\frac12\,\Lambda\Big((\partial_t\eta^{(0)})^2-\big|\mathcal R\,\partial_t\eta^{(0)}\big|^2\Big)
	-\comm{\Lambda}{\partial_t\eta^{(0)}}\,\partial_t\eta^{(0)}
	\notag\\[0.3em]
	&\quad
	-(\nabla_x\partial_t\eta^{(0)})\cdot \mathcal R\,\partial_t\eta^{(0)}
	+\comm{\Lambda}{\eta^{(0)}}\,\mathcal T\eta^{(0)}
	+(\nabla_x\eta^{(0)})\cdot \mathcal R\big(\mathcal T\eta^{(0)}\big)
	\notag\\[0.3em]
	&\quad
	+\frac{\beta}{4}\Big(
	\comm{\Lambda}{\eta^{(0)}}\,\mathcal T\Delta_x^2\eta^{(0)}
	+(\nabla_x\eta^{(0)})\cdot \mathcal R\big(\mathcal T\Delta_x^2\eta^{(0)}\big)
	\Big)
	\notag\\[0.3em]
	&\quad
	-\delta\Big(
	\comm{\Lambda}{\eta^{(0)}}\,\mathcal T\partial_t\Delta_x\eta^{(0)}
	+(\nabla_x\eta^{(0)})\cdot \mathcal R\big(\mathcal T\partial_t\Delta_x\eta^{(0)}\big)
	\Big).
	\notag
\end{align}

\medskip

Finally, defining the renormalized variable
\[
f=\eta^{(0)}+\varepsilon\,\eta^{(1)},
\]
adding the $n=0$ equation \eqref{eq:n0-eta-closed2} to $\varepsilon$ times
\eqref{eq:eta1_forced_eta0_no_eta0tt_T}, and neglecting $O(\varepsilon^2)$ terms,
we obtain a closed $\mathcal{O}(\varepsilon)$ model for $f$ of the form
\begin{equation}\label{eq:f_model_compact_final_again}
	\big(I+\Upsilon\Lambda\big) f_{tt}
	+\delta\,\Lambda^{3} f_t
	+\Big(\Lambda+\frac{\beta}{4}\Lambda^{5}\Big)f
	=
	\varepsilon\,\mathfrak N[f],
\end{equation}
where the nonlinear forcing is
\begin{align}
	\mathfrak N[f]
	&=
	\frac12\,\Lambda\Big((f_t)^2-\big|\mathcal R f_t\big|^2\Big)
	-\comm{\Lambda}{f_t}\,f_t
	-(\nabla_x f_t)\cdot \mathcal R f_t
	\label{eq:def_N_compact_final_T}\\[0.3em]
	&\quad
	+\comm{\Lambda}{f}\,\mathcal T f
	+(\nabla_x f)\cdot \mathcal R\big(\mathcal T f\big)
	\notag\\[0.3em]
	&\quad
	+\frac{\beta}{4}\Big(
	\comm{\Lambda}{f}\,\mathcal T\Delta_x^2 f
	+(\nabla_x f)\cdot \mathcal R\big(\mathcal T\Delta_x^2 f\big)
	\Big)
	\notag\\[0.3em]
	&\quad
	-\delta\Big(
	\comm{\Lambda}{f}\,\mathcal T\partial_t\Delta_x f
	+(\nabla_x f)\cdot \mathcal R\big(\mathcal T\partial_t\Delta_x f\big)
	\Big).
	\notag
\end{align}

\subsection{The unidirectional models}\label{subsec:uni:models}
In this section we derive unidirectional asymptotic models associated with the
bidirectional systems obtained above. To that purpose, as in \cite{AlonsoOran2021MHD,AlonsoOranDuranGraneroBelinchon2024, ChengGraneroBelinchonShkollerWilkening2019, Granero-Scrobogna, GraneroBelinchonShkoller2017},  we restrict to one horizontal dimension $x\in\TT$ and introduce the
unidirectional variables
\[
\xi=x-t,\qquad \tau=\varepsilon t,
\qquad f(x,t)=F(\xi,\tau).
\]
\subsubsection*{The unidirectional model for system \eqref{eq:f_equation_compact_closed}}
By the chain rule,
\begin{equation}
\label{eq:chain_rule_uni}
\partial_t=-\partial_\xi+\varepsilon\partial_\tau,
\qquad
\partial_x=\partial_\xi,
\end{equation}
and consequently
\begin{equation}
\label{eq:derivatives_uni}
f_t=-F_\xi+\varepsilon F_\tau,
\qquad
f_{tt}=F_{\xi\xi}-2\varepsilon F_{\xi\tau}+O(\varepsilon^2),
\qquad
f_{xx}=F_{\xi\xi},
\qquad
f_{xxxx}=F_{\xi\xi\xi\xi}.
\end{equation}
Substituting \eqref{eq:derivatives_uni} into \eqref{eq:f_equation_compact_closed}
and discarding $\mathcal{O}(\varepsilon^2)$ terms yields the $(\xi,\tau)$-equation
\begin{align}
&(I+\Upsilon\Lambda)\big(F_{\xi\xi}-2\varepsilon F_{\xi\tau}\big)
+\frac{\beta}{4}\,\Lambda F_{\xi\xi\xi\xi}
+\delta\,\Lambda F_{\xi\xi\xi}
-\varepsilon\,\delta\,\Lambda F_{\tau\xi\xi}
+\Lambda F
\label{eq:unidirectional_Oeps}\\[0.3em]
&\qquad=
\varepsilon\Bigg[
\frac12\,\Lambda\Big(F_\xi^{\,2}-\big(\Lambda F \big)^2\Big)
-\partial_\xi\Big(\comm{\Lambda}{F}\,F_\xi\Big)
+\partial_\xi\Big(F_\xi\,\Lambda F \Big)
\Bigg],
\nonumber
\end{align}
where all Fourier multipliers are understood with respect to $\xi$, and we used
$\partial_t\partial_{xx}f=-F_{\xi\xi\xi}+\varepsilon F_{\tau\xi\xi}$. Integrating \eqref{eq:unidirectional_Oeps} once in $\xi$ and using that
$\Lambda$ commutes with $\partial_\xi$, we obtain (up to an additive function
of $\tau$ which vanishes under the mean-zero convention)
\begin{align}
&(I+\Upsilon\Lambda)\big(F_{\xi}-2\varepsilon F_{\tau}\big)
+\frac{\beta}{4}\,\Lambda F_{\xi\xi\xi}
+\delta\,\Lambda F_{\xi\xi}
-\varepsilon\,\delta\,\Lambda F_{\tau\xi}
+\mathcal H F
\label{eq:F_integrated}\\[0.3em]
&\qquad=
\varepsilon\Bigg[
\frac12\,\mathcal H\Big(F_\xi^{\,2}-\big(\Lambda F \ \big)^2\Big)
-\comm{\Lambda}{F}\,F_\xi
+F_\xi\,\Lambda F
\Bigg].
\nonumber
\end{align}
Moreover, by means of the Tricomi identity for the Hilbert transform \eqref{eq:Tricomi_general}, we can write
\begin{equation}
\frac12\,\mathcal H\Big(F_\xi^{\,2}-\big(\Lambda F\big)^2\Big)
=
\frac12\,\mathcal H\Big(F_\xi^{\,2}-\big(\mathcal H F_\xi\big)^2\Big)
=
F_\xi\,\mathcal H F_\xi
=
F_\xi\,\Lambda F.
\end{equation}
Thus, \eqref{eq:F_integrated} simplifies to
\begin{equation}\label{eq:F_integrated2}
\begin{aligned}
(I+\Upsilon\Lambda)\big(F_{\xi}-2\varepsilon F_{\tau}\big)
+\frac{\beta}{4}\,\Lambda F_{\xi\xi\xi}
+\delta\,\Lambda F_{\xi\xi}
-\varepsilon\,\delta\,\Lambda F_{\tau\xi}
+\mathcal H F
&=\varepsilon\Big(-\comm{\Lambda}{F}\,F_\xi+2F_\xi\,\Lambda F\Big).
\end{aligned}
\end{equation}

Rewriting \eqref{eq:F_integrated}, moving the $\tau$-terms to the left-hand side,
we obtain the evolution equation
\begin{multline}\label{eq:Ft_isolated_integrated_full}
\varepsilon\Big(2(I+\Upsilon\Lambda)+\delta\,\Lambda\partial_\xi\Big)F_\tau
=(I+\Upsilon\Lambda)F_\xi+\frac{\beta}{4}\,\Lambda F_{\xi\xi\xi}
+\delta\,\Lambda F_{\xi\xi}+\mathcal H F \\
-\varepsilon\Big(2F_\xi\,\Lambda F-\comm{\Lambda}{F}F_\xi \Big).
\end{multline}

Starting from \eqref{eq:Ft_isolated_integrated_full}, we write
\begin{equation}
\label{eq:epsMFt_full}
\varepsilon\,\mathcal M F_\tau
=
(I+\Upsilon\Lambda)F_\xi
+\frac{\beta}{4}\,\Lambda F_{\xi\xi\xi}
+\delta\,\Lambda F_{\xi\xi}
+\mathcal H F
-\varepsilon\Bigg[2F_\xi\,\Lambda F 
-\comm{\Lambda}{F}\,F_\xi
\Bigg],
\end{equation}
where
\[
\mathcal M:=2(I+\Upsilon\Lambda)+\delta\,\Lambda\partial_\xi,
\qquad
\mathcal M^\ast:=2(I+\Upsilon\Lambda)-\delta\,\Lambda\partial_\xi.
\]
Multiplying \eqref{eq:epsMFt_full} on the left by $\mathcal M^\ast$ and using
\[
\mathcal M^\ast\mathcal M
=
4(I+\Upsilon\Lambda)^2-\delta^2\Lambda^2\partial_\xi^2,
\]
we obtain
\begin{equation}\label{eq:Mstar_step_expanded}
\varepsilon(\mathcal M^\ast\mathcal M)F_\tau
=\mathcal M^\ast\!\left((I+\Upsilon\Lambda)F_\xi+\frac{\beta}{4}\Lambda F_{\xi\xi\xi}
+\delta\Lambda F_{\xi\xi}+\mathcal H F\right)
-\varepsilon\,\mathcal M^\ast\!\left(2 F_\xi\,\Lambda F-\comm{\Lambda}{F}F_\xi\right).
\end{equation}
Applying $(\mathcal M^\ast\mathcal M)^{-1}$ (understood mode-by-mode on $k\neq0$)
yields the first inversion step
\begin{align}
F_\tau
&=
\frac{1}{\varepsilon}\,(\mathcal M^\ast\mathcal M)^{-1}\mathcal M^\ast
\Bigg[
(I+\Upsilon\Lambda)F_\xi
+\frac{\beta}{4}\,\Lambda F_{\xi\xi\xi}
+\delta\,\Lambda F_{\xi\xi}
+\mathcal H F
\Bigg]
\label{eq:Ft_first_step_noG}\\[0.3em]
&\quad\quad\quad
-(\mathcal M^\ast\mathcal M)^{-1}\mathcal M^\ast
\Bigg[2F_\xi\,\Lambda F 
-\comm{\Lambda}{F}\,F_\xi
\Bigg].
\nonumber
\end{align}

On the Fourier side ($k\neq0$), one checks that
\[
(\mathcal M^\ast\mathcal M)^{-1}\mathcal M^\ast
=
a(\Lambda,\partial_\xi)+b(\Lambda,\partial_\xi)\,\mathcal H,
\]
where $a(\Lambda,\partial_\xi)$ and $b(\Lambda,\partial_\xi)$ are real Fourier multipliers with symbols
\[
a_k:=\frac{2(1+\Upsilon|k|)}{4(1+\Upsilon|k|)^2+\delta^2|k|^2k^2},
\qquad
b_k:=\frac{\delta|k|^2}{4(1+\Upsilon|k|)^2+\delta^2|k|^2k^2},
\qquad k\neq0,
\]
or equivalently,
\begin{equation}
\label{eq:ab_operators_repeat}
a(\Lambda,\partial_\xi)
=
\frac{2(I+\Upsilon\Lambda)}{4(I+\Upsilon\Lambda)^2-\delta^2\Lambda^2\partial_\xi^2},
\qquad
b(\Lambda,\partial_\xi)
=
\frac{\delta\,\Lambda^2}{4(I+\Upsilon\Lambda)^2-\delta^2\Lambda^2\partial_\xi^2}.
\end{equation}
Substituting this identity into \eqref{eq:Ft_first_step_noG} yields
\begin{equation}\label{eq:Ft_final_ab_noG}
F_\tau=\frac{1}{\varepsilon}\big(a+b\,\mathcal H\big)\!\left((I+\Upsilon\Lambda)F_\xi+\frac{\beta}{4}\Lambda F_{\xi\xi\xi}+\delta\Lambda F_{\xi\xi}+\mathcal H F\right)
-\big(a+b\,\mathcal H\big)\!\left(2F_\xi\,\Lambda F-\comm{\Lambda}{F}F_\xi\right),
\end{equation}
with $a$ and $b$ given by \eqref{eq:ab_operators_repeat}.

\subsubsection{The unidirectional model for \eqref{eq:f_model_compact_final_again}}
We now derive the unidirectional reduction of \eqref{eq:f_model_compact_final_again}
in the same manner as for the first model. More precisely, we restrict to one horizontal space
dimension and introduce the slow variables
\[
\xi=x-t,\qquad \tau=\varepsilon t,\qquad f(x,t)=F(\xi,\tau).
\]
\medskip
With these conventions, up to $\mathcal{O}(\varepsilon^2)$ we have
\begin{equation*}
\begin{aligned}
&f_t=-F_\xi+\varepsilon F_\tau,\qquad
f_{tt}=F_{\xi\xi}-2\varepsilon F_{\xi\tau}+\mathcal O(\varepsilon^2),\qquad
f_{xx}=F_{\xi\xi},\qquad f_{xxxx}=F_{\xi\xi\xi\xi},\\
&\partial_t f_{xx}=(-\partial_\xi+\varepsilon\partial_\tau)F_{\xi\xi}
=-F_{\xi\xi\xi}+\varepsilon F_{\tau\xi\xi}.
\end{aligned}
\end{equation*}
Substituting into equation \eqref{eq:f_model_compact_final_again}, doing some straightforward computations and neglecting $\mathcal{O}(\varepsilon^2)$ we find the equation
\begin{align}
&(I+\Upsilon\Lambda)\big(F_{\xi\xi}-2\varepsilon F_{\xi\tau}\big)
+\frac{\beta}{4}\Lambda F_{\xi\xi\xi\xi}
+\delta\Lambda F_{\xi\xi\xi}
-\varepsilon\delta\Lambda F_{\tau\xi\xi}
+\Lambda F
\label{eq:uni_expanded_Oeps}\\[0.2em]
&\qquad=
\varepsilon\Bigg[
\frac12\,\Lambda\Big(F_\xi^{\,2}-(\Lambda F)^2\Big)
-\comm{\Lambda}{F_\xi}F_\xi
+F_{\xi\xi}\Lambda F
+\comm{\Lambda}{F}\,\mathcal T F
-F_\xi\,\mathcal H\mathcal T F \notag\\
&\qquad\qquad\quad
+\frac{\beta}{4}\Big(\comm{\Lambda}{F}\,\mathcal T F_{\xi\xi\xi\xi}
-F_\xi\,\Lambda\mathcal T F_{\xi\xi\xi}\Big)
+\delta\Big(\comm{\Lambda}{F}\,\mathcal T F_{\xi\xi\xi}
-F_\xi\,\Lambda\mathcal T F_{\xi\xi}\Big)
\Bigg].
\nonumber
\end{align}
We first move the $\tau$--dependent
linear terms to the left-hand side and factor out the slow time derivative. Namely,
\eqref{eq:uni_expanded_Oeps} can be rewritten as
\begin{align}
\varepsilon\Big(2(I+\Upsilon\Lambda)\partial_\xi+\delta\,\Lambda\partial_\xi^2\Big)F_\tau
&=
(I+\Upsilon\Lambda)F_{\xi\xi}
+\frac{\beta}{4}\Lambda F_{\xi\xi\xi\xi}
+\delta\Lambda F_{\xi\xi\xi}
+\Lambda F
\label{eq:Ft_isolated_no_integration}\\[0.2em]
&\quad
-\varepsilon\Bigg[
\frac12\,\Lambda\Big(F_\xi^{\,2}-(\Lambda F)^2\Big)
-\comm{\Lambda}{F_\xi}F_\xi
+F_{\xi\xi}\Lambda F
+\comm{\Lambda}{F}\,\mathcal T F
-F_\xi\,\mathcal H\mathcal T F \notag\\
&\qquad\qquad\quad
+\frac{\beta}{4}\Big(\comm{\Lambda}{F}\,\mathcal T F_{\xi\xi\xi\xi}
-F_\xi\,\Lambda \mathcal T F_{\xi\xi\xi}\Big)
+\delta\Big(\comm{\Lambda}{F}\,\mathcal T F_{\xi\xi\xi}
-F_\xi\,\Lambda \mathcal T F_{\xi\xi}\Big)
\Bigg].
\nonumber
\end{align}

For convenience, denote the linear operator acting on $F_\tau$ by
\[
\mathcal M_1:=2(I+\Upsilon\Lambda)\partial_\xi+\delta\,\Lambda\partial_\xi^2,
\qquad
\mathcal M_1^\ast:=-2(I+\Upsilon\Lambda)\partial_\xi+\delta\,\Lambda\partial_\xi^2,
\]
so that, 
\[
\mathcal M_1^\ast\mathcal M_1=
\big(\delta\,\Lambda\partial_\xi^2\big)^2-4\big((I+\Upsilon\Lambda)\partial_\xi\big)^2.
\]
Multiplying \eqref{eq:Ft_isolated_no_integration} by $\mathcal M_1^\ast$ and applying
$(\mathcal M_1^\ast\mathcal M_1)^{-1}$ (mode-by-mode on $k\neq0$) yields the
first inversion step\begin{align}
F_\tau
&=
\frac{1}{\varepsilon}\,(\mathcal M_1^\ast\mathcal M_1)^{-1}\mathcal M_1^\ast
\Big[
(I+\Upsilon\Lambda)F_{\xi\xi}
+\frac{\beta}{4}\Lambda F_{\xi\xi\xi\xi}
+\delta\Lambda F_{\xi\xi\xi}
+\Lambda F
\Big]
\label{eq:Ft_first_step_no_integration}\\[0.2em]
&\quad
-(\mathcal M_1^\ast\mathcal M_1)^{-1}\mathcal M_1^\ast
\Bigg[
\frac12\,\Lambda\Big(F_\xi^{\,2}-(\Lambda F)^2\Big)
-\comm{\Lambda}{F_\xi}F_\xi
+F_{\xi\xi}\Lambda F
+\comm{\Lambda}{F}\,\mathcal T F
-F_\xi\,\mathcal H\mathcal T F \notag\\
&\qquad\qquad\quad
+\frac{\beta}{4}\Big(\comm{\Lambda}{F}\,\mathcal T F_{\xi\xi\xi\xi}
-F_\xi\,\Lambda \mathcal T F_{\xi\xi\xi}\Big)
+\delta\Big(\comm{\Lambda}{F}\,\mathcal T F_{\xi\xi\xi}
-F_\xi\,\Lambda \mathcal T F_{\xi\xi}\Big)
\Bigg].
\nonumber
\end{align}
On the Fourier side ($k\neq0$), the operator
\[
\mathcal M_1:=2(I+\Upsilon\Lambda)\partial_\xi+\delta\,\Lambda\partial_\xi^2
\]
has symbol
\[
m_1(k)
=
-\,\delta|k|k^2+i\,2(1+\Upsilon|k|)k.
\]
Therefore, $(\mathcal M_1^\ast\mathcal M_1)^{-1}\mathcal M_1^\ast$ has associated symbol
\[
\frac{\overline{m_1(k)}}{|m_1(k)|^2}
=
\frac{-\delta|k|k^2-i\,2(1+\Upsilon|k|)k}{\delta^2|k|^2k^4+4(1+\Upsilon|k|)^2k^2}
\]
Similarly as before, we obtain the
decomposition
\begin{equation}
	\label{eq:M1inv_ab_form}
	(\mathcal M_1^\ast\mathcal M_1)^{-1}\mathcal M_1^\ast
	=
	\alpha(\Lambda,\partial_\xi)
	+\gamma(\Lambda,\partial_\xi)\,\mathcal H,
\end{equation}
where $\alpha(\Lambda,\partial_\xi)$ and $\gamma(\Lambda,\partial_\xi)$ are real Fourier multipliers with symbols
\[
\alpha_k
:=
\frac{-\delta|k|}{\delta^2|k|^4+4(1+\Upsilon|k|)^2},
\qquad
\gamma_k
:=
\frac{2(1+\Upsilon|k|)}{|k|\big(\delta^2|k|^4+4(1+\Upsilon|k|)^2\big)},
\qquad k\neq0.
\]
Equivalently, at the operator level,
\[
\alpha(\Lambda,\partial_\xi)
=
-\frac{\delta\,\Lambda}{\delta^2\Lambda^2\partial_\xi^2-4(I+\Upsilon\Lambda)^2},
\qquad
\gamma(\Lambda,\partial_\xi)
=
\frac{2(I+\Upsilon\Lambda)\,\Lambda^{-1}}{\delta^2\Lambda^2\partial_\xi^2-4(I+\Upsilon\Lambda)^2}.
\] 
	
With this notation, we may write 
\begin{align}
F_\tau
&=
\frac{1}{\varepsilon}\big(\alpha+\gamma\,\mathcal H\big)
\Big[
(I+\Upsilon\Lambda)F_{\xi\xi}
+\frac{\beta}{4}\Lambda F_{\xi\xi\xi\xi}
+\delta\Lambda F_{\xi\xi\xi}
+\Lambda F
\Big]
\label{eq:Ft_final_abH_no_integration}\\[0.2em]
&\quad
-\big(\alpha+\gamma\,\mathcal H\big)
\Bigg[
\frac12\,\Lambda\Big(F_\xi^{\,2}-(\Lambda F)^2\Big)
-\comm{\Lambda}{F_\xi}F_\xi
+F_{\xi\xi}\Lambda F
+\comm{\Lambda}{F}\,\mathcal T F
-F_\xi\,\mathcal H\mathcal T F \notag\\
&\qquad\qquad\quad
+\frac{\beta}{4}\Big(\comm{\Lambda}{F}\,\mathcal T F_{\xi\xi\xi\xi}
-F_\xi\,\Lambda \mathcal T F_{\xi\xi\xi}\Big)
+\delta\Big(\comm{\Lambda}{F}\,\mathcal T F_{\xi\xi\xi}
-F_\xi\,\Lambda \mathcal T F_{\xi\xi}\Big)
\Bigg].
\nonumber
\end{align}

\section{Well-posedness result for the bidirectional model}\label{Wp:bi}
In this section we establish the well-posedness theory for \eqref{eq:f_equation_compact_closed}. In particular, we prove the following result:
\begin{theorem}[Local well-posedness for small $H^3$ data]\label{thm:WPbi}
Let $\Upsilon>0$ and $\delta,\beta>0$, and let $\varepsilon\in\R$ be fixed.
Assume that the initial data satisfy
\[
f(x,0)=f_0\in H^3_0(\TT^2),\qquad \partial_{t}f(x,0)=f_1\in H^1_0(\TT^2),
\qquad \|f_0\|_{H^3}\le \varepsilon_\ast,
\]
for some $\varepsilon_\ast=\varepsilon_\ast(\Upsilon,\delta,\beta,|\varepsilon|)>0$.
Then there exists a time
\[
T=T\big(\Upsilon,\delta,\beta,|\varepsilon|,\|f_1\|_{H^1},\|f_0\|_{H^3}\big)>0
\]
and a unique solution $f$ of \eqref{eq:f_equation_compact_closed} on $[0,T]$ such that
\[
f\in L^\infty(0,T;H^3_{0}(\TT^2)),\qquad f_t\in L^\infty(0,T;H^1_{0}(\TT^2)).
\]
\end{theorem}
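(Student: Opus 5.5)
We expand the time derivatives on the right-hand side of \eqref{eq:f_equation_compact_closed} and collect the terms containing $f_{tt}$ on the left. Writing $v=f_t$, the equation becomes
\[
(I+\Upsilon\Lambda)v_t+\varepsilon\,\mathbb N(f,v_t)=-\delta\Lambda^3 v-\Big(\Lambda+\tfrac{\beta}{4}\Lambda^5\Big)f+\varepsilon\,\mathbb Q(f,v),\qquad f_t=v,
\]
where
\[
\mathbb N(f,w):=\comm{\Lambda}{f}w+\nabla_x f\cdot\mathcal R w
\]
is linear in $w$, with coefficients depending on $f$, and
\[
\mathbb Q(f,v)=\tfrac12\Lambda\big(v^2-|\mathcal Rv|^2\big)-\comm{\Lambda}{v}v-\nabla_x v\cdot\mathcal R v
\]
collects the remaining quadratic terms. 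This is the first-order system with the doubly nonlinear structure $v_t+N^1(v_t)=N^2(v)$.

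\textbf{Step 1: inverting the operator on $v_t$.} We would show that $w\mapsto(I+\Upsilon\Lambda)w+\varepsilon\mathbb N(f,w)$ is invertible from $H^{s+1}_0$ onto $H^s_0$ for $s\in\{-1,0\}$, provided $|\varepsilon|\,\|f\|_{H^3}$ is small. By the commutator estimate $\|\comm{\Lambda}{f}w\|_{H^s}\lesssim\|\nabla f\|_{W^{1,\infty}}\|w\|_{H^s}$ (a Calderón/Kato–Ponce type bound) together with \eqref{eq:Riesz_Hs} and \eqref{eq:Sobolev_2d_embed}, we have
\[
\|\mathbb N(f,w)\|_{H^s}\lesssim\|f\|_{H^3}\|w\|_{H^{s}}.
\]
Since $(I+\Upsilon\Lambda)$ gains one derivative, a Neumann series converges once $\|f\|_{H^3}\le 2\varepsilon_\ast$. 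This is where the smallness hypothesis enters. We would first carry this out at the regularized level.

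\textbf{Step 2: two-parameter regularization.} We mollify the system with $\mathcal J_\nu$ from \eqref{eq:mollifier_bounds}, applying it to $\mathbb Q$ and to the coefficients of $\mathbb N$. We also add artificial viscosity $\mu\Lambda^2$, or equivalently a second mollification parameter, in the operator acting on $v_t$, so that the problem is solvable for each fixed $(\nu,\mu)$ by nested fixed points. The \emph{inner} fixed point solves for $v_t$ given $(f,v)$ by Step 1, as a contraction in $w$. The \emph{outer} fixed point, in $C([0,T];H^3_0\times H^1_0)$, is a Picard iteration for the resulting Lipschitz ODE. The time of existence of these approximations may depend on $(\nu,\mu)$.

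\textbf{Step 3: uniform energy estimates.} This is the main step and the expected obstacle. We test the $v$-equation with $v$ and with $\Lambda^2 v$, and use
\[
\mathcal E(t)=\|(I+\Upsilon\Lambda)^{1/2}v\|_{H^1}^2+\|\Lambda^{1/2}f\|_{H^1}^2+\tfrac\beta4\|\Lambda^{5/2}f\|_{H^1}^2,
\]
which controls $\|f\|_{H^3}^2+\|v\|_{H^1}^2$. The damping supplies $\delta\|\Lambda^{3/2}v\|_{H^1}^2$, that is, control of $v$ in $L^2_tH^{5/2}$.

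The quadratic terms of $\mathbb Q$ lose up to one derivative on $v$. We would absorb them via
\[
|\langle\Lambda^2\mathbb Q,v\rangle|\lesssim\|v\|_{H^{2}}^2\|v\|_{H^{1}}\lesssim\|v\|_{H^{5/2}}^{2}\|v\|_{H^1}^{1/3}\cdots,
\]
interpolating so that the right-hand side is bounded by $\tfrac{\delta}{2}\|v\|_{H^{5/2}}^2$ plus a polynomial in $\mathcal E$.

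The term $\varepsilon\langle\Lambda^2\mathbb N(f,v_t),v\rangle$ is treated by substituting $v_t$ from the equation through Step 1. This gives a bound $\lesssim|\varepsilon|\|f\|_{H^3}\big(\|v\|_{H^{5/2}}^2+\mathcal E\big)$ plus higher-order terms. Here the smallness of $f$ absorbs the top-order piece into the dissipation.

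Together these yield a differential inequality $\mathcal E'\le C(1+\mathcal E)^p$, with $\|f(t)\|_{H^3}$ staying below $2\varepsilon_\ast$ on a short time $T$ by continuity. Both $T$ and the bounds are uniform in $(\nu,\mu)$, so the approximate solutions extend to the common time $T$.

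\textbf{Step 4: passage to the limit.} We let $\mu,\nu\to0$ using weak-$*$ compactness together with Aubin–Lions, which is available since $v_t$ is bounded in $L^\infty H^{-1}$ by Step 1.

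\textbf{Step 5: uniqueness.} We estimate the difference of two solutions in the weaker norm $H^2\times L^2$, again inverting the $v_t$ operator as in Step 1, and conclude by Grönwall's inequality.
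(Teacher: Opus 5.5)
Your architecture is the paper's: smallness of $f$ to invert $(I+\Upsilon\Lambda)+\varepsilon\,\mathbb N(f,\cdot)$, two regularization parameters with nested fixed points, uniform energy bounds, and compactness. Controlling $\langle\mathbb N(f,v_t),\cdot\rangle$ by substituting $v_t$ from the inverted equation is a legitimate alternative to the paper's second multiplier $\Lambda^{-1}v_t$.

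The genuine gap is Step~3, which is set half a derivative too high. Since $\Upsilon>0$, you have $\|(I+\Upsilon\Lambda)^{1/2}v\|_{H^1}\sim\|v\|_{H^{3/2}}$ and $\|\Lambda^{5/2}f\|_{H^1}\sim\|f\|_{\dot H^{7/2}}$. So your $\mathcal E$ is equivalent to $\|f\|_{H^{7/2}}^2+\|v\|_{H^{3/2}}^2$, not $\|f\|_{H^3}^2+\|v\|_{H^1}^2$. The dissipation $L^2_tH^{5/2}$ you absorb into is likewise unavailable: already for the linearized flow it requires $f_0\in H^{7/2}$ and $f_1\in H^{3/2}$. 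For $f_0\in H^3_0$, $f_1\in H^1_0$ one has $\mathcal E(0)=\infty$ in general, and for mollified data $\mathcal E(0)\to\infty$ as the mollification is removed. Hence $\mathcal E'\le C(1+\mathcal E)^p$ gives no time uniform in $(\nu,\mu)$, and Step~4 collapses.

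The multiplier matched to the data is $\Lambda v$ alone, which is the paper's choice. It gives
\[
\frac12\frac{d}{dt}\Big(\|v\|_{\dot H^{1/2}}^2+\Upsilon\|v\|_{\dot H^{1}}^2+\|f\|_{\dot H^{1}}^2+\frac{\beta}{4}\|f\|_{\dot H^{3}}^2\Big)+\delta\|v\|_{\dot H^{2}}^2
=\varepsilon\big\langle\mathbb Q(f,v),\Lambda v\big\rangle-\varepsilon\big\langle\mathbb N(f,v_t),\Lambda v\big\rangle ,
\]
with only $L^2_tH^2$ dissipation, so every nonlinear bound must be redone at this level.
\begin{itemize}
\item The quadratic term is harmless: $|\langle\mathbb Q(f,v),\Lambda v\rangle|\lesssim\|v\|_{H^1}^2\|v\|_{H^2}$.
\item Your substitution still works, but $v_t$ now lies only in $H^{-1}$. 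Since $\Lambda^5 f\in H^{-2}$ for $f\in H^3$, Step~1 must be done at $s=-2$ (from $H^{-1}_0$ onto $H^{-2}_0$). The range $s\in\{-1,0\}$ would need $f\in H^4$ or better, and $s=-2$ is also what your Step~4 bound $v_t\in L^\infty H^{-1}$ actually uses.
\item With $\mathbb N^\ast(f,\cdot)$ the $L^2$-adjoint, $|\langle\mathbb N(f,v_t),\Lambda v\rangle|\le\|v_t\|_{H^{-1}}\|\mathbb N^\ast(f,\Lambda v)\|_{H^1}\lesssim\|f\|_{H^3}\|v_t\|_{H^{-1}}\|v\|_{H^2}$, where $\|v_t\|_{H^{-1}}\lesssim\|v\|_{H^1}+\|f\|_{H^3}+|\varepsilon|\|v\|_{H^1}^2$. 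Young's inequality against $\delta\|v\|_{\dot H^2}^2$ then closes.
\end{itemize}

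At the corrected level a second step also breaks: keeping $\|f(t)\|_{H^3}\le 2\varepsilon_\ast$ ``by continuity'' on a time independent of the regularization. The corrected energy only bounds $\|f(t)\|_{H^3}^2$ by its initial value, which contains $\Upsilon\|f_1\|_{H^1}^2$. Moreover, $f_t=v\in L^2_tH^2$ gives no modulus of continuity in $H^3$. This is sharp: the linearized modes decay like $e^{-c|k|^2t}$, and an $f_1$ concentrated at frequency $N$ produces $\|f(t)\|_{H^3}\sim\|f_1\|_{H^1}$ at $t\sim N^{-2}$. So no time depending only on norms keeps $\|f\|_{H^3}$ small unless $f_1$ is small too. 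The constraint $\|\bar f\|_{L^\infty H^3}\le 2\|f_0\|_{H^3}$ in the paper's class $\mathbb X_T$ meets the same obstruction, so following the paper will not fix this.

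The remedy is that the Neumann series at $s=-2$ only needs $|\varepsilon|\,\|f\|_{H^2}$ small, because $\|\mathbb N(f,w)\|_{H^{-2}}\lesssim\|f\|_{H^2}\|w\|_{H^{-1}}$. Meanwhile $\|f(t)-f_0\|_{H^2}\le\sqrt t\,\|v\|_{L^2_tH^2}$ is quantitatively small. Then $\|f\|_{H^3}$ enters only in the bound above, where no smallness is required.

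Two smaller points:
\begin{itemize}
\item $\|\nabla f\|_{W^{1,\infty}}$ is not controlled by $\|f\|_{H^3(\TT^2)}$, since $H^1(\TT^2)$ does not embed in $L^\infty$. The bilinear bounds must therefore be proved with $L^p$-based product and commutator estimates, e.g.\ using $\nabla f\in L^\infty$ and $\nabla^2 f\in L^4$.
\item For Step~2 to be a Lipschitz ODE on $H^3_0\times H^1_0$, you must also regularize $f_t=v$ and the terms $\Lambda^3 v$, $\Lambda^5 f$. This must be done symmetrically so that $\langle\Lambda^5 f,\Lambda v\rangle=\frac12\frac{d}{dt}\|f\|_{\dot H^3}^2$ survives; the paper pairs $f_t=\mathcal J_\mu\mathcal J_\mu v$ with $\mathcal J_\mu\Lambda^5\mathcal J_\mu f$. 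Adding $\mu\Lambda^2$ to the operator on $v_t$ gains only two derivatives and does not achieve this.
\end{itemize}
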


\begin{proof}[Proof of Theorem \ref{thm:WPbi}]
We begin by rewriting \eqref{eq:f_equation_compact_closed} as a first--order
system in time. Set
\[
v:=\partial_t f,
\qquad
\mathbb N(f,w):=\comm{\Lambda}{f}\,w+\nabla_x f\cdot \mathcal R w
\]
Then \eqref{eq:f_equation_compact_closed} is equivalent to
\begin{equation}\label{eq:f_system_compact}
\begin{cases}
\partial_t f=v,\\[0.4em]
\big(I+\Upsilon\Lambda\big)\partial_t v
+\delta\,\Lambda^{3} v
+\Big(\Lambda+\dfrac{\beta}{4}\Lambda^{5}\Big)f
+\mathbb N\!\big(f,\partial_t v\big)
=
\varepsilon\,\mathcal Q(v),
\end{cases}
\end{equation}
where the quadratic forcing is
\begin{equation}\label{eq:def_Qv}
\mathcal Q(v)
:=
\frac12\,\Lambda\Big(v^{2}-\big|\mathcal R v\big|^{2}\Big)
-\comm{\Lambda}{v}\,v
-\nabla_x v\cdot \mathcal R v.
\end{equation}

Fix $T>0$. We work in the class of functions
\[
\mathbb X_T:=\Big\{\bar f\in L^\infty(0,T;H^3_{0}(\TT^2)):\ \bar f(0)=f_0,\ 
\|\bar f\|_{L^\infty_{t} H^3_{x}}\le 2\|f_0\|_{H^3_{x}}\Big\},
\]
\[
\mathbb Y_T:=\Big\{\bar v\in L^\infty(0,T;H^1_{0}(\TT^2)):\ \bar v(0)=f_1\},
\]
and construct a solution by a two-stage approximation procedure. \medskip
\subsubsection*{\underline{Step 0: Strategy of the proof}}
The argument is somewhat delicate: we introduce two regularizations with parameters $\mu$ and $\lambda$, solve an elliptic problem
to identify the time derivative, construct the evolution by a contraction mapping, and then pass
to the limits using $\mu$-- and $\lambda$--uniform estimates. We summarize these steps next and
provide the detailed implementation afterwards. \medskip

Given $(\bar f,\bar v)\in\mathbb X_T\times\mathbb Y_T$, we solve an auxiliary
$\lambda$--regularized elliptic equation for an unknown
$U^{\lambda,\mu}[\bar f,\bar v]$ (Step~1). Since $\mathbb N(f,\cdot)$ is linear
in its second argument, this amounts to inverting a perturbation of
$(I+\Upsilon\Lambda)$; the smallness of $\|f_0\|_{H^3}$ (and hence of
$\|\bar f\|_{L^\infty_{t} H^3_{x}}$) ensures that the inversion is well-defined and
yields a unique $U^{\lambda,\mu}$ in the chosen Sobolev class. \medskip

With this $U^{\lambda,\mu}$ fixed, we solve the corresponding $\lambda$--regularized evolution for
$v^{\lambda,\mu}$ with initial datum $v_0$ and choose $T$ so that the map
$\bar v\mapsto v^{\lambda,\mu}$ is a contraction in $\mathbb Y_T$ (Step~2). In
particular, the fixed point $v^{\lambda,\mu}$ satisfies $v_t^{\lambda,\mu}=U^{\lambda,\mu}$
by construction. We then pass to the limit $\lambda\to\infty$ (with $\mu$ fixed)
using uniform-in-$\lambda$ bounds, obtaining a function $v^\mu$ solving the
$\mu$--regularized equation (Step~3). \medskip

Second, with $v^\mu$ available, we recover $f^\mu$ by solving $f_t^\mu=v^\mu$ with
initial datum $f_0$, and close the coupling through a fixed point argument in
$\mathbb X_T$ (Step~4), now only at the $\mu$--regularized level. Finally, we let
$\mu\to\infty$ and use the corresponding uniform-in-$\mu$ a priori bounds to pass
to the limit, yielding a solution $f$ of the original system on $[0,T]$ with
$f\in L^\infty(0,T;H^3_{0})$ and $f_t\in L^\infty(0,T;H^1_{0})$ (Step~5). \medskip

\subsubsection*{\underline{Step 1: Solving the elliptic problem for $U$}}

Fix $T>0$ and regularization parameters $0<\lambda,\mu\leq 1$. Let
$(\bar f,\bar v)\in \mathbb X_T\times\mathbb Y_T$. In this step we will
construct an auxiliary unknown
\[
U=U^{\lambda,\mu}[\bar f,\bar v]\in L^\infty(0,T;H^1_{0}(\TT^2)),
\]
as the unique solution to the regularized elliptic problem
\begin{equation}\label{eq:U_elliptic_step1}
(I+\Upsilon\Lambda)U
=
\mathcal F^{\lambda,\mu}[\bar f,\bar v]
-\mathcal J_\lambda\,\mathbb N(\mathcal J_\mu\bar f,\mathcal J_\lambda U),
\qquad\text{in }(0,T)\times\TT^2,
\end{equation}
where the forcing is
\begin{align}\label{eq:forcing_F_lammu}
\mathcal F^{\lambda,\mu}[\bar f,\bar v]
:={}&
-\delta\,\mathcal J_\lambda\Lambda^3\mathcal J_\lambda \bar v
-\mathcal J_\mu\Big(\Lambda+\frac{\beta}{4}\Lambda^5\Big)\mathcal J_\mu \bar f
\notag\\
&\quad
+\varepsilon\,\mathcal J_\lambda\Bigg[
\frac12\,\Lambda\Big((\mathcal J_\lambda \bar v)^2-\big|\mathcal R\,\mathcal J_\lambda \bar v\big|^2\Big)
-\comm{\Lambda}{\mathcal J_\lambda \bar v}\,\mathcal J_\lambda \bar v
-\nabla_x(\mathcal J_\lambda \bar v)\cdot \mathcal R(\mathcal J_\lambda \bar v)
\Bigg],
\end{align}
and the bilinear coupling is
\begin{equation}\label{eq:def_Nbb}
\mathbb N(f,g):=\comm{\Lambda}{f}\,g+\nabla_x f\cdot \mathcal R g.
\end{equation}
Since $\Lambda$ and each Riesz transform $\mathcal R_j$ annihilate constants,
and $\mathcal J_\lambda,\mathcal J_\mu$ preserve the zero Fourier mode, it follows
that if $\bar f,\bar v$ have zero spatial mean, then each term in
\eqref{eq:forcing_F_lammu} has zero mean. Similary, after a straightforward integration by parts the bilinear term $\comm{\Lambda}{\mathcal J_\mu\bar f}\,(\mathcal J_\lambda U)$ and
$\nabla_x(\mathcal J_\mu\bar f)\cdot \mathcal R(\mathcal J_\lambda U)$ have zero mean. Therefore, we seek $U(t)\in H^1_0(\TT^2)$ solving \eqref{eq:U_elliptic_step1}. \medskip

Define the Banach space
\[
\mathbb U_T:=L^\infty\big(0,T;H^1_0(\TT^2)\big),
\qquad
\|U\|_{\mathbb U_T}:=\esssup_{t\in(0,T)}\|U(t)\|_{H^1}.
\]
For fixed $(\bar f,\bar v)$ we define the operator
\[
\Phi_{\bar f,\bar v}:\mathbb U_T\to \mathbb U_T,
\]
by
\begin{equation}\label{eq:Phi_def_step1}
(\Phi_{\bar f,\bar v}[\bar U])(t)
:=
(I+\Upsilon\Lambda)^{-1}\Big(
\mathcal F^{\lambda,\mu}[\bar f,\bar v](t)
-\mathcal J_\lambda\,\mathbb N(\mathcal J_\mu\bar f(t),\mathcal J_\lambda \bar U(t))
\Big),
\qquad t\in(0,T).
\end{equation}
A fixed point $U=\Phi_{\bar f,\bar v}[U]$ is precisely a solution to
\eqref{eq:U_elliptic_step1} in $\mathbb U_T$.  Let $\bar U\in\mathbb U_T$. Using the fact that 
\begin{equation*}
\|(I+\Upsilon\Lambda)^{-1}G\|_{H^1}\le \|G\|_{L^2},
\qquad G\in H^1_0(\TT^2),
\end{equation*}
the bilinear estimate 
\begin{equation*}
\| \mathbb N(\mathcal J_\mu\bar f(t),\mathcal J_\lambda \bar U(t)) \|_{H^1}
\leq C_{\mu}
\|\bar f(t)\|_{H^3}\,\|\bar U(t)\|_{H^1}
\end{equation*}
and the boundedness of $\mathcal J_\lambda$ on $H^1$
and $\mathcal J_\mu$ on $H^3$, we obtain for
a.e.\ $t\in(0,T)$
\[
\|(\Phi_{\bar f,\bar v}[\bar U])(t)\|_{H^1}
\lesssim
\|\mathcal F^{\lambda,\mu}[\bar f,\bar v](t)\|_{H^1}
+C_{\mu}\|\bar f(t)\|_{H^3}\,\|\bar U(t)\|_{H^1}.
\]
Taking the essential supremum over $t$ gives
\begin{equation}\label{eq:U_bound_with_Ubar_full}
\|\Phi_{\bar f,\bar v}[\bar U]\|_{L^\infty(0,T;H^1)}
\le
C(\lambda,\mu,\bar f,\bar v)
+C_{\mu}\,\|\bar f\|_{L^\infty(0,T;H^3)}\,\|\bar U\|_{L^\infty(0,T;H^1)},
\end{equation}
where we set
\begin{equation}\label{eq:C_lammu_def}
C(\lambda,\mu,\bar f,\bar v)
:=
C\,\|\mathcal F^{\lambda,\mu}[\bar f,\bar v]\|_{L^\infty(0,T;H^{1})}.
\end{equation}
This quantity is finite for $(\bar f,\bar v)\in \mathbb X_T\times\mathbb Y_T$
because $\mathcal J_\lambda$ and $\mathcal J_\mu$ are smoothing and hence map
$L^\infty_tH^s_x$ into $L^\infty_tH^{s+m}_x$ for any $m\ge0$. \medskip

Next, let us show the contraction estimate. Let $\bar U^{(1)},\bar U^{(2)}\in\mathbb U_T$. Subtracting \eqref{eq:Phi_def_step1},
\[
\Phi_{\bar f,\bar v}[\bar U^{(1)}]-\Phi_{\bar f,\bar v}[\bar U^{(2)}]
=
-(I+\Upsilon\Lambda)^{-1}\mathcal J_\lambda\,
\mathbb N\!\left(\mathcal J_\mu\bar f,\ \mathcal J_\lambda(\bar U^{(1)}-\bar U^{(2)})\right).
\]
Hence, by a similar argument as before
\[
\|\Phi_{\bar f,\bar v}[\bar U^{(1)}]-\Phi_{\bar f,\bar v}[\bar U^{(2)}]\|_{\mathbb U_T}
\;\lesssim\;
\|\bar f\|_{L^\infty(0,T;H^3)}\,
\|\bar U^{(1)}-\bar U^{(2)}\|_{\mathbb U_T}.
\]
Assume now that 
\begin{equation}\label{eq:smallness_f0_step1}
\|f_0\|_{H^3(\TT^2)}\le \varepsilon_\ast,
\end{equation}
where $\varepsilon_\ast>0$ will be fixed momentarily. Since $\bar f\in\mathbb X_T$
we have $\|\bar f\|_{L^\infty H^3}\le 2\|f_0\|_{H^3}$, and thus
\[
\|\Phi_{\bar f,\bar v}[\bar U^{(1)}]-\Phi_{\bar f,\bar v}[\bar U^{(2)}]\|_{L^\infty H^1}
\le
(2C\|f_0\|_{H^3})\,
\|\bar U^{(1)}-\bar U^{(2)}\|_{L^\infty H^1}.
\]
Choose $\varepsilon_\ast>0$ so small that
\[
2C\,\varepsilon_\ast \le \frac12
\qquad\text{and}\qquad
C\,\varepsilon_\ast \le \frac12.
\]
(Equivalently, $\,\varepsilon_\ast\le (4C)^{-1}$.)
Then $\Phi_{\bar f,\bar v}$ is a strict contraction on $\mathbb U_T$, hence by
Banach's fixed-point theorem there exists a unique
\[
U^{\lambda,\mu}[\bar f,\bar v]\in L^\infty(0,T;H^1_0(\TT^2))
\]
such that $U^{\lambda,\mu}[\bar f,\bar v]=\Phi_{\bar f,\bar v}[U^{\lambda,\mu}[\bar f,\bar v]]$,
i.e.\ $U^{\lambda,\mu}[\bar f,\bar v]$ solves \eqref{eq:U_elliptic_step1}.
Moreover, taking $\bar U=U$ in \eqref{eq:U_bound_with_Ubar_full} and using
$\|\bar f\|_{L^\infty_t H^3_x}\le 2\|f_0\|_{H^3}\le \varepsilon_\ast$, we obtain
\[
\|U\|_{L^\infty(0,T;H^1)}
\le
C(\lambda,\mu,\bar f,\bar v)
+
C\,\varepsilon_\ast\,\|U\|_{L^\infty(0,T;H^1)}.
\]
Hence $C\varepsilon_\ast\le \frac12$ implies
\begin{equation}\label{eq:U_bound_absorbed_full}
\|U\|_{L^\infty(0,T;H^1)}
\le
2\,C(\lambda,\mu,\bar f,\bar v).
\end{equation}
So, we conclude that for each fixed $T>0$ and $0<\lambda,\mu\leq 1$, and for each
$(\bar f,\bar v)\in\mathbb X_T\times\mathbb Y_T$ with mean zero and satisfying
\eqref{eq:smallness_f0_step1}, the elliptic fixed-point problem
\eqref{eq:U_elliptic_step1} admits a unique solution
\[
U^{\lambda,\mu}[\bar f,\bar v]\in L^\infty(0,T;H^1_0(\TT^2)),
\]
and this solution satisfies the bound \eqref{eq:U_bound_absorbed_full}. 

\subsubsection*{\underline{Step 2: Solving the equation for $v$}}
 In Step~1 we constructed, under the smallness assumption
$\|f_0\|_{H^3}\le \varepsilon_\ast$, a unique auxiliary field
\[
U^{\lambda,\mu}[\bar f,\bar v]\in \mathbb U_T=L^\infty(0,T;H^1_0(\TT^2)),
\]
solving the elliptic problem \eqref{eq:U_elliptic_step1}. In this step
we solve for a velocity $v=v^{\lambda,\mu}$ such that $v_t=U^{\lambda,\mu}[\bar f,v]$. \medskip

Given $\bar v\in\mathbb Y_T$, define
\[
\mathcal U_{\bar f}(\bar v):=U^{\lambda,\mu}[\bar f,\bar v]\in \mathbb U_T,
\]
the elliptic solution from Step~1 associated with $(\bar f,\bar v)$. Then we define
$\Psi_{\bar f}:\mathbb Y_T\to\mathbb Y_T$ by
\begin{equation}\label{eq:Psi_def_step2_H1}
(\Psi_{\bar f}[\bar v])(t)
:=
v_0+\int_0^t \mathcal U_{\bar f}(\bar v)(s)\,ds,
\qquad t\in[0,T].
\end{equation}
Let $\bar v\in\mathbb Y_T$. Since $\mathcal U_{\bar f}(\bar v)\in\mathbb U_T$,
the integral in \eqref{eq:Psi_def_step2_H1} is well-defined in $H^1$, and
\[
\|\Psi_{\bar f}[\bar v](t)\|_{H^1}
\le
\|v_0\|_{H^1}+\int_0^t \|\mathcal U_{\bar f}(\bar v)(s)\|_{H^1}\,ds
\le
\|v_0\|_{H^1}+T\,\|\mathcal U_{\bar f}(\bar v)\|_{L^\infty(0,T;H^1)}.
\]
Hence $\Psi_{\bar f}[\bar v]\in \mathbb Y_T$ provided we control
$\|\mathcal U_{\bar f}(\bar v)\|_{L^\infty H^1}$. Such a bound follows from Step~1,
since for each $t$ the fixed point $U(t)$ solves
\[
(I+\Upsilon\Lambda)U
=
\mathcal F^{\lambda,\mu}[\bar f,\bar v]
-\mathcal J_\lambda\,\mathbb N(\mathcal J_\mu\bar f,\mathcal J_\lambda U),
\]
and Step~1 provides an estimate of the form
\begin{equation}\label{eq:U_H1_bound_step2}
\|\mathcal U_{\bar f}(\bar v)\|_{L^\infty(0,T;H^1)}
\le
C(\lambda,\mu,\bar f,\bar v),
\end{equation}
where $C(\lambda,\mu,\bar f,\bar v)$ is finite for
$(\bar f,\bar v)\in\mathbb X_T\times\mathbb Y_T$ (and depends on $\lambda,\mu$
through smoothing). \medskip

Let $\bar v^{(1)},\bar v^{(2)}\in \mathbb Y_T$, and set
\[
U^{(i)}:=\mathcal U_{\bar f}(\bar v^{(i)})=U^{\lambda,\mu}[\bar f,\bar v^{(i)}]
\in \mathbb U_T.
\]
Subtracting the elliptic equations \eqref{eq:U_elliptic_step1} satisfied by $U^{(1)}$
and $U^{(2)}$ yields, for a.e.\ $t\in(0,T)$,
\begin{align}\label{eq:Udiff_step2_H1}
(I+\Upsilon\Lambda)\big(U^{(1)}-U^{(2)}\big)
&=
\Big(\mathcal F^{\lambda,\mu}[\bar f,\bar v^{(1)}]-\mathcal F^{\lambda,\mu}[\bar f,\bar v^{(2)}]\Big)
\nonumber\\
&\quad
-\mathcal J_\lambda\Big(\mathbb N(\mathcal J_\mu\bar f,\mathcal J_\lambda U^{(1)})
-\mathbb N(\mathcal J_\mu\bar f,\mathcal J_\lambda U^{(2)})\Big).
\end{align}
The difference in the coupling term is linear in $U^{(1)}-U^{(2)}$, and by the bilinear
estimate from the preliminaries (applied in $H^1$ using that $\mathcal J_\lambda$ is bounded on $H^1$)
we have
\begin{equation}\label{eq:N_Lip_in_U_step2_H1}
\big\|\mathcal J_\lambda\big(\mathbb N(\mathcal J_\mu\bar f,\mathcal J_\lambda U^{(1)})
-\mathbb N(\mathcal J_\mu\bar f,\mathcal J_\lambda U^{(2)})\big)\big\|_{H^1}
\;\lesssim\;
\|\bar f\|_{H^3}\,\|U^{(1)}-U^{(2)}\|_{H^1}.
\end{equation}

It remains to estimate the forcing difference in $H^1$. Inspecting
\eqref{eq:forcing_F_lammu}, the $\bar v$--dependence is contained only in the terms
\[
-\delta\,\mathcal J_\lambda\Lambda^3\mathcal J_\lambda \bar v
\qquad\text{and}\qquad
\varepsilon\,\mathcal J_\lambda\,\mathcal Q(\mathcal J_\lambda\bar v),
\]
where
\[
\mathcal Q(w):=
\frac12\,\Lambda\Big(w^2-|\mathcal R w|^2\Big)
-\comm{\Lambda}{w}\,w
-\nabla_x w\cdot \mathcal R w.
\]
Since $\mathcal J_\lambda$ is smoothing, the Fourier multiplier
$\mathcal J_\lambda\Lambda^3\mathcal J_\lambda$ is bounded from $H^1$ to $H^1$, with
operator norm depending on $\lambda$; hence
\begin{equation}\label{eq:lin_forcing_Lip_H1}
\big\|\mathcal J_\lambda\Lambda^3\mathcal J_\lambda(\bar v^{(1)}-\bar v^{(2)})\big\|_{H^1}
\le C_\lambda\,\|\bar v^{(1)}-\bar v^{(2)}\|_{H^1}.
\end{equation}
For the quadratic term, we use that $\mathcal J_\lambda:H^1\to H^m$ for every $m$,
and in particular $\|\mathcal J_\lambda h\|_{H^3}\le C_\lambda\|h\|_{H^1}$. Since
$H^3(\TT^2)\hookrightarrow W^{1,\infty}(\TT^2)$, standard product/commutator bounds
imply that $\mathcal Q$ is locally Lipschitz from $H^3$ to $H^1$:
for every $R>0$,
\begin{equation}\label{eq:Q_Lip_step2_H1}
\|\mathcal Q(w^{(1)})-\mathcal Q(w^{(2)})\|_{H^1}
\le
C\big(\|w^{(1)}\|_{H^3}+\|w^{(2)}\|_{H^3}\big)\,\|w^{(1)}-w^{(2)}\|_{H^3}
\end{equation}
and therefore, for $\bar v^{(1)},\bar v^{(2)}$ with $\|\bar v^{(i)}\|_{H^1}\le R$,
\begin{equation}\label{eq:Q_Lip_after_Jlambda_H1}
\|\mathcal Q(\mathcal J_\lambda\bar v^{(1)})-\mathcal Q(\mathcal J_\lambda\bar v^{(2)})\|_{H^1}
\le
C_{\lambda,R}\,\|\bar v^{(1)}-\bar v^{(2)}\|_{H^1},
\end{equation}
for some constant $C_{\lambda,R}$ depending on $\lambda$ and $R$. Since $\mathcal J_\lambda$
is bounded on $H^1$, we then also have
\begin{equation}\label{eq:forcing_quad_Lip_H1}
\big\|\mathcal J_\lambda(\mathcal Q(\mathcal J_\lambda\bar v^{(1)})-\mathcal Q(\mathcal J_\lambda\bar v^{(2)}))\big\|_{H^1}
\le
C_{\lambda,R}\,\|\bar v^{(1)}-\bar v^{(2)}\|_{H^1}.
\end{equation}

Combining \eqref{eq:lin_forcing_Lip_H1} and \eqref{eq:forcing_quad_Lip_H1} gives
\begin{equation}\label{eq:forcing_Lip_total_H1}
\big\|\mathcal F^{\lambda,\mu}[\bar f,\bar v^{(1)}]-\mathcal F^{\lambda,\mu}[\bar f,\bar v^{(2)}]\big\|_{H^1}
\le
C_{\lambda,R}\,\|\bar v^{(1)}-\bar v^{(2)}\|_{H^1}.
\end{equation}

Now apply $(I+\Upsilon\Lambda)^{-1}$ to \eqref{eq:Udiff_step2_H1}. Using that
$(I+\Upsilon\Lambda)^{-1}$ is bounded on $H^1_0(\TT^2)$ and combining
\eqref{eq:N_Lip_in_U_step2_H1} and \eqref{eq:forcing_Lip_total_H1}, we obtain
\begin{align}\label{eq:U_Lip_in_v_step2_H1}
\|U^{(1)}-U^{(2)}\|_{H^1}
&\le
C_{\lambda,R}\,\|\bar v^{(1)}-\bar v^{(2)}\|_{H^1}
+
C\,\|\bar f\|_{H^3}\,\|U^{(1)}-U^{(2)}\|_{H^1}.
\end{align}
Under the same smallness assumption as in Step~1,
$\|\bar f\|_{L^\infty(0,T;H^3)}\le 2\|f_0\|_{H^3}\le \varepsilon_\ast$ with
$\varepsilon_\ast$ chosen so that $C\varepsilon_\ast\le \tfrac12$, we can absorb the
last term in \eqref{eq:U_Lip_in_v_step2_H1} and obtain the Lipschitz bound
\begin{equation}\label{eq:U_Lip_final_step2_H1}
\|\mathcal U_{\bar f}(\bar v^{(1)})-\mathcal U_{\bar f}(\bar v^{(2)})\|_{L^\infty(0,T;H^1)}
\le
C_{\lambda,R}\,
\|\bar v^{(1)}-\bar v^{(2)}\|_{L^\infty(0,T;H^1)},
\end{equation}
for $\bar v^{(1)},\bar v^{(2)}$ in the closed ball
$\{\|v\|_{\mathbb Y_T}\le R\}$. Let $\bar v^{(1)},\bar v^{(2)}\in\mathbb Y_T$ with $\|\bar v^{(i)}\|_{\mathbb Y_T}\le R$,
and set $v^{(i)}=\Psi_{\bar f}[\bar v^{(i)}]$. Using \eqref{eq:Psi_def_step2_H1} and
\eqref{eq:U_Lip_final_step2_H1},
\[
\|v^{(1)}(t)-v^{(2)}(t)\|_{H^1}
\le
\int_0^t \|\mathcal U_{\bar f}(\bar v^{(1)})(s)-\mathcal U_{\bar f}(\bar v^{(2)})(s)\|_{H^1}\,ds
\le
t\,C_{\lambda,R}\,\|\bar v^{(1)}-\bar v^{(2)}\|_{\mathbb Y_T}.
\]
Taking the essential supremum over $t\in(0,T)$ gives
\begin{equation}\label{eq:Psi_contraction_step2_H1}
\|\Psi_{\bar f}[\bar v^{(1)}]-\Psi_{\bar f}[\bar v^{(2)}]\|_{\mathbb Y_T}
\le
T\,C_{\lambda,R}\,\|\bar v^{(1)}-\bar v^{(2)}\|_{\mathbb Y_T}.
\end{equation}
Choosing $T>0$ so that $T\,C_{\lambda,R}\le \tfrac12$, we conclude that
$\Psi_{\bar f}$ is a strict contraction on the closed ball
$\{\|v\|_{\mathbb Y_T}\le R\}$, provided $R$ is chosen so that
$\Psi_{\bar f}$ maps the ball into itself. This follows from
\eqref{eq:U_H1_bound_step2} and
\[
\|v\|_{\mathbb Y_T}
\le
\|v_0\|_{H^1}+T\,\|\mathcal U_{\bar f}(v)\|_{L^\infty(0,T;H^1)}.
\]
Hence Banach's fixed point theorem yields a unique
\[
v^{\lambda,\mu}\in \mathbb Y_T
\quad\text{such that}\quad
v^{\lambda,\mu}=\Psi_{\bar f}[v^{\lambda,\mu}].
\]
Moreover, we have
\begin{equation}\label{eq:v_solves_step2_H1}
\partial_t v^{\lambda,\mu}
=
U^{\lambda,\mu}[\bar f,v^{\lambda,\mu}]
\qquad\text{in }L^\infty(0,T;H^1(\TT^2)),
\end{equation}
thereby identifying the elliptic unknown with the time derivative of $v$. In particular, substituting \eqref{eq:v_solves_step2_H1} into the elliptic identity
\eqref{eq:U_elliptic_step1} (with $\bar v=v^{\lambda,\mu}$) shows that
$v^{\lambda,\mu}$ satisfies the $\lambda$--regularized evolution equation
\begin{align}
(I+\Upsilon\Lambda)\,\partial_t v^{\lambda,\mu}
&+\delta\,\mathcal{J}_\lambda\Lambda^{3}\mathcal{J}_\lambda v^{\lambda,\mu}
+\mathcal{J}_\mu\Big(\Lambda+\frac{\beta}{4}\Lambda^{5}\Big)\mathcal{J}_\mu\bar f
+\mathcal{J}_\lambda\mathbb{N}\big(\mathcal{J}_\mu\bar f,\mathcal{J}_\lambda\partial_t v^{\lambda,\mu}\big)
\notag\\
&=\varepsilon\,\mathcal{J}_\lambda\Bigg[
\frac12\,\Lambda\Big((\mathcal{J}_\lambda v^{\lambda,\mu})^2-\big|\mathcal R\,\mathcal{J}_\lambda v^{\lambda,\mu}\big|^2\Big)
-\comm{\Lambda}{\mathcal{J}_\lambda v^{\lambda,\mu}}\,\mathcal{J}_\lambda v^{\lambda,\mu}
-\nabla_x(\mathcal{J}_\lambda v^{\lambda,\mu})\cdot \mathcal R(\mathcal{J}_\lambda v^{\lambda,\mu})
\Bigg],
\label{eq:v_equation_reg_step2}
\end{align}
in $L^\infty(0,T;H^1(\TT^2))$. 

\subsubsection*{\underline{Step 3: Passing to the limit as $\lambda\to 0$}}

Fix $0<\mu\leq 1$ and let $v^\lambda$ be the solutions
constructed in Step~2 on $[0,T]$, where 
$v^\lambda\in L^\infty(0,T;H^1_0(\TT^2))$.\footnote{Here and below we suppress the dependence
on $\mu$ and on $\bar f$ to lighten notation.} We claim that, for $T>0$ chosen as in Step~2, the family $\{v^\lambda\}$ satisfies
the uniform estimates
\begin{equation}\label{eq:lambda_uniform_bounds}
\sup_{0\le t\le T}\|v^\lambda(t)\|_{H^1}\ \le\ C,
\qquad
\int_0^T\|\mathcal J_\lambda v^\lambda(t)\|_{H^2}^2\,dt\ \le\ C,
\qquad
\int_0^T\|\pa_{t}v^{\lambda}(t)\|_{H^{-1/2}}^2\,dt\ \le\ C,
\end{equation}
with a constant $C$ independent of $\lambda$. Testing the regularized equation \eqref{eq:v_equation_reg_step2} against
$\Lambda v^\lambda$ yields an inequality of the form
\begin{equation}\label{eq:energy1_lambda}
\frac12\frac{d}{dt}\Big(\|v^\lambda\|_{\dot H^{1/2}}^2+\Upsilon\|v^\lambda\|_{\dot H^1}^2\Big)
+\delta\|\mathcal J_\lambda v^\lambda\|_{\dot H^2}^2
\ \le\ \mathcal R_1^\lambda(t),
\end{equation}
where $\mathcal R_1^\lambda$ collects the quadratic terms and the coupling term
$\mathbb N(\mathcal J_\mu\bar f,\mathcal J_\lambda U^\lambda)$. More precisely,
\begin{align*}
\mathcal R_1^\lambda(t)
={}&
-\int_{\TT^2}\Big(\mathcal J_\mu\Big(\Lambda+\frac{\beta}{4}\Lambda^{5}\Big)\mathcal J_\mu\bar f(t)\Big)\,\Lambda v^\lambda(t)\,dx
-\int_{\TT^2}\Big(\mathcal J_\lambda\,\mathbb N\!\big(\mathcal J_\mu\bar f(t),\mathcal J_\lambda v_{t}^\lambda(t)\big)\Big)\,\Lambda v^\lambda(t)\,dx\\
&\hspace{-1cm}
+\varepsilon\int_{\TT^2}\mathcal J_\lambda\Bigg[
\frac12\,\Lambda\Big((\mathcal J_\lambda v^\lambda(t))^2-\big|\mathcal R\,\mathcal J_\lambda v^\lambda(t)\big|^2\Big)
-\comm{\Lambda}{\mathcal J_\lambda v^\lambda(t)}\,\mathcal J_\lambda v^\lambda(t)
-\nabla_x(\mathcal J_\lambda v^\lambda(t))\cdot\mathcal R(\mathcal J_\lambda v^\lambda(t))
\Bigg]\;\Lambda v^\lambda(t)\,dx\\[0.2em]
=:{}&\,M_1(t)+M_2(t)+M_3(t).
\end{align*}
Using $\|\bar f\|_{L^\infty_tH^3_x}\le2\|f_0\|_{H^3}$ and Cauchy--Schwarz together with the boundedness/smoothing of $\mathcal J_\mu$ on Sobolev spaces, we have
\[
|M_1(t)|
\lesssim\|\bar f(t)\|_{H^3}\,\|v^\lambda(t)\|_{H^1}
\le C_{\mu,\beta}\|f_0\|_{H^3}\,\|v^\lambda(t)\|_{H^1}.
\]

To estimate $M_3(t)$ we use that $\mathcal J_\lambda$ is self-adjoint and bounded on
all $H^s$, $H^2(\TT^2)\hookrightarrow W^{1,\infty}(\TT^2)$, and the boundedness of
Riesz transforms on Sobolev spaces. We obtain
\begin{align}
|M_3(t)|
&\lesssim \|\mathcal J_\lambda v^\lambda(t)\|_{H^2}\,\|v^\lambda(t)\|_{L^2}\,\|v^\lambda(t)\|_{H^1}
+\|\mathcal J_\lambda v^\lambda(t)\|_{H^2}\,\|v^\lambda(t)\|_{H^1}^2.
\label{eq:M3_bound_compact}
\end{align}
To estimate $M_2(t)$ we use that $\mathcal J_\lambda$ is self-adjoint on $L^2$ and write
\[
M_2(t)
:=
-\int_{\TT^2}\Big(\mathcal J_\lambda\,\mathbb N(\mathcal J_\mu\bar f(t),\mathcal J_\lambda v_t^\lambda(t))\Big)\,\Lambda v^\lambda(t)\,dx
=
-\int_{\TT^2}\mathbb N(\mathcal J_\mu\bar f(t),\mathcal J_\lambda v_t^\lambda(t))\,\Lambda\mathcal J_\lambda v^\lambda(t)\,dx.
\]

Expanding $\mathbb N(f,g)=[\Lambda,f]g+\nabla f\cdot\mathcal R g$, integrating by parts and Sobolev embedding we find that
\begin{align*}
|M_{2}(t)|&\leq C \norm{\mathcal J_\lambda v^{\lambda}}_{H^2} \norm{\mathcal J_\lambda v_{t}^{\lambda}}_{L^2}\norm{\bar{f}}_{L^{\infty}}-2\int_{\TT^{2}} \bar{f}\Lambda J_{\lambda}v^{\lambda}\Lambda J_{\lambda}v^{\lambda}_{t}\,dx, \\
&\leq C \norm{\mathcal J_\lambda v^{\lambda}}_{H^2}  \norm{ v_{t}^{\lambda}}_{L^2}\norm{\bar{f}}_{L^{\infty}} + C \norm{\bar{f}\Lambda J_{\lambda}v^{\lambda}}_{H^1}\norm{\Lambda J_{\lambda}v^{\lambda}_{t}}_{H^{-1}},  \\
&\leq C \norm{\mathcal J_\lambda v^{\lambda}}_{H^2}  \norm{ v_{t}^{\lambda}}_{L^2}\norm{\bar{f}}_{H^{2}}
\end{align*}
where in the second inequality we make use of the $H^{-1}-H^{1}$ duality. In particular, since $\|\bar f\|_{L^\infty_tH^2_x}\le \|\bar f\|_{L^\infty_tH^3_x}\le 2\|f_0\|_{H^3}$,
\[
|M_2(t)|\ \lesssim_{\mu}\ \|f_0\|_{H^3}\,\|v_t^\lambda(t)\|_{L^2}\,\|\mathcal J_\lambda v^\lambda(t)\|_{H^2}.
\]
Therefore, using Young's inequality we find that
\begin{equation}
|\mathcal R_1^\lambda(t)|
\le \frac{\delta}{4}\|\mathcal J_\lambda v^\lambda(t)\|_{H^2}^2
+ C_{\delta,\mu,\beta}\Big(1+\|f_0\|_{H^3}^2\Big)\Big(\|v^\lambda(t)\|_{H^1}^4+\|v_t^\lambda(t)\|_{L^2}^2+1\Big).
\end{equation}
Thus, 
\begin{equation}\label{eq:energy1_lambda2}
\frac12\frac{d}{dt}\Big(\|v^\lambda\|_{\dot H^{1/2}}^2+\Upsilon\|v^\lambda\|_{\dot H^1}^2\Big)
+\frac{\delta}{2}\|\mathcal J_\lambda v^\lambda\|_{\dot H^2}^2
\ \le C_{\delta,\mu,\beta}\Big(1+\|f_0\|_{H^3}^2\Big)\Big(\|v^\lambda(t)\|_{H^1}^4+\|v_t^\lambda(t)\|_{L^2}^2+1\Big).
\end{equation}

Next, we test \eqref{eq:v_equation_reg_step2} against $\Lambda^{-1}v_t^\lambda$.  Using that $I+\Upsilon\Lambda$ is
self-adjoint and commutes with $\Lambda^{-1}$, and that $\mathcal J_\lambda$ is
self-adjoint and commutes with $\Lambda$, we obtain
\begin{align}\label{eq:energy2_lambda}
\frac12\|v_t^\lambda(t)\|_{\dot H^{-1/2}}^2
+\frac{\Upsilon}{2}\|v_t^\lambda(t)\|_{L^2}^2
+\frac{\delta}{2}\frac{d}{dt}\|\mathcal J_\lambda v^\lambda(t)\|_{\dot H^1}^2
\le \mathcal R_2^\lambda(t),
\end{align}
where $\mathcal R_2^\lambda$ collects the quadratic terms and the coupling term
$\mathbb N(\mathcal J_\mu\bar f,\mathcal J_\lambda v_t^\lambda)$ tested against
$\Lambda^{-1}v_t^\lambda$. Arguing exactly as in the estimate of $\mathcal R_1^\lambda$, we obtain the  bound
\begin{equation}\label{eq:R2_bound}
\mathcal R_2^\lambda(t)
\le
C\,\|v^\lambda(t)\|_{H^1}^4+\frac{\Upsilon}{4}\|v_t^\lambda(t)\|_{L^2}^2,
\end{equation}
with $C$ independent of $\lambda$. Absorbing the last term into the left-hand side
of \eqref{eq:energy2_lambda} yields
\begin{equation}\label{eq:energy2_absorbed}
\|v_t^\lambda(t)\|_{\dot H^{-1/2}}^2
+\frac{\Upsilon}{2}\|v_t^\lambda(t)\|_{L^2}^2+\delta\frac{d}{dt}\|\mathcal J_\lambda v^\lambda(t)\|_{\dot H^1}^2
\le
C\,\|v^\lambda(t)\|_{H^1}^4.
\end{equation}
Set
\[
E_1^\lambda(t):=\|v^\lambda(t)\|_{\dot H^{1/2}}^2+\Upsilon\|v^\lambda(t)\|_{\dot H^{1}}^2,
\qquad
Y(t):=\sup_{0\le s\le t}\|v^\lambda(s)\|_{H^1}^2.
\]
Since $v^\lambda$ has zero mean, Poincar\'e implies $E_1^\lambda(t)\sim \|v^\lambda(t)\|_{H^1}^2$. From \eqref{eq:energy1_lambda2} we have, for all $t\in[0,T]$,
\begin{equation}\label{eq:A_energy1}
\frac{d}{dt}E_1^\lambda(t)+\frac{\delta}{2}\|\mathcal J_\lambda v^\lambda(t)\|_{\dot H^2}^2
\ \le\ 
C_0\Big(\|v^\lambda(t)\|_{H^1}^4+\|v_t^\lambda(t)\|_{L^2}^2+1\Big),
\end{equation}
where $C_0=C_0(\Upsilon,\delta,\mu,\beta)\big(1+\|f_0\|_{H^3}^2\big)$ and is independent of $\lambda$.
Integrating \eqref{eq:A_energy1} on $[0,t]$ gives
\begin{equation}\label{eq:A_energy1_int}
\sup_{0\le s\le t}E_1^\lambda(s)
+\frac{\delta}{2}\int_0^t\|\mathcal J_\lambda v^\lambda(s)\|_{\dot H^2}^2\,ds
\ \le\
E_1^\lambda(0)
+C_0\int_0^t\Big(\|v^\lambda(s)\|_{H^1}^4+\|v_t^\lambda(s)\|_{L^2}^2+1\Big)\,ds.
\end{equation}
From \eqref{eq:energy2_absorbed} we have, for all $t\in[0,T]$,
\begin{equation}\label{eq:B_energy2}
\|v_t^\lambda(t)\|_{\dot H^{-1/2}}^2+\frac{\Upsilon}{2}\|v_t^\lambda(t)\|_{L^2}^2
+\delta\frac{d}{dt}\|\mathcal J_\lambda v^\lambda(t)\|_{\dot H^1}^2
\ \le\ C\,\|v^\lambda(t)\|_{H^1}^4,
\end{equation}
with $C$ independent of $\lambda$. Integrating \eqref{eq:B_energy2} on $[0,t]$ yields
\begin{equation}\label{eq:B_energy2_int}
\int_0^t\|v_t^\lambda(s)\|_{\dot H^{-1/2}}^2\,ds
+\frac{\Upsilon}{2}\int_0^t\|v_t^\lambda(s)\|_{L^2}^2\,ds
+\delta\|\mathcal J_\lambda v^\lambda(t)\|_{\dot H^1}^2
\ \le\
\delta\|\mathcal J_\lambda v_0\|_{\dot H^1}^2
+C\int_0^t\|v^\lambda(s)\|_{H^1}^4\,ds.
\end{equation}
In particular, using $\|\mathcal J_\lambda v_0\|_{\dot H^1}\le \|v_0\|_{\dot H^1}$ and
$\|v^\lambda(s)\|_{H^1}^4\le Y(t)^2$ for $s\le t$,
\begin{equation}\label{eq:B_L2_control}
\int_0^t\|v_t^\lambda(s)\|_{L^2}^2\,ds
\ \lesssim\
\|v_0\|_{H^1}^2+t\,Y(t)^2,
\qquad
\int_0^t\|v_t^\lambda(s)\|_{H^{-1/2}}^2\,ds
\ \lesssim\
\|v_0\|_{H^1}^2+t\,Y(t)^2,
\end{equation}
uniformly in $\lambda$. Insert \eqref{eq:B_L2_control} and $\int_0^t\|v^\lambda(s)\|_{H^1}^4\,ds\le t\,Y(t)^2$
into \eqref{eq:A_energy1_int}. Using $E_1^\lambda(0)\sim \|v_0\|_{H^1}^2$ and
$E_1^\lambda(s)\sim \|v^\lambda(s)\|_{H^1}^2$, we obtain
\begin{equation}\label{eq:C_poly_pre}
Y(t)
+\int_0^t\|\mathcal J_\lambda v^\lambda(s)\|_{H^2}^2\,ds
\ \le\
C_1\big(1+\|v_0\|_{H^1}^2\big)+C_2\,t\,Y(t)^2,
\qquad t\in[0,T],
\end{equation}
where $C_1,C_2$ are independent of $\lambda$ (and depend on $\Upsilon,\delta,\mu,\beta$ and $\|f_0\|_{H^3}$).
In particular we have the polynomial inequality
\begin{equation}\label{eq:C_poly}
Y(t)\ \le\ C_1\big(1+\|v_0\|_{H^1}^2\big)+C_2\,t\,Y(t)^2,
\qquad t\in[0,T].
\end{equation}
Choose $T>0$ such that
\[
C_2\,T\,\Big(2C_1(1+\|v_0\|_{H^1}^2)\Big)\ \le\ \frac12.
\]
A standard continuity/bootstrapping argument applied to \eqref{eq:C_poly} yields
\begin{equation}\label{eq:C_Y_uniform}
\sup_{0\le t\le T}\|v^\lambda(t)\|_{H^1}^2
=\sup_{0\le t\le T}Y(t)
\ \le\ 2C_1\big(1+\|v_0\|_{H^1}^2\big),
\end{equation}
with a bound independent of $\lambda$. Plugging \eqref{eq:C_Y_uniform} back into
\eqref{eq:C_poly_pre} and \eqref{eq:B_energy2_int} gives the remaining uniform bounds:
\begin{equation}\label{eq:lambda_uniform_bounds_final}
\sup_{0\le t\le T}\|v^\lambda(t)\|_{H^1}\le C,
\qquad
\int_0^T\|\mathcal J_\lambda v^\lambda(t)\|_{H^2}^2\,dt\le C,
\qquad
\int_0^T\|v_t^\lambda(t)\|_{H^{-1/2}}^2\,dt\le C,
\end{equation}
for a constant $C$ independent of $\lambda$. \medskip

By \eqref{eq:lambda_uniform_bounds_final} we have the $\lambda$--uniform bounds
\[
v^\lambda \ \text{bounded in } L^\infty(0,T;H^1(\TT^2)),
\qquad
\partial_t v^\lambda \ \text{bounded in } L^2(0,T;H^{-1/2}(\TT^2)).
\]
Fix $\theta\in(0,\tfrac12)$. Since the embedding
$H^1(\TT^2)\hookrightarrow H^{1-\theta}(\TT^2)$ is compact and
$H^{1-\theta}(\TT^2)\hookrightarrow H^{-1/2}(\TT^2)$ is continuous, the
Aubin--Lions lemma yields, after extracting a subsequence (not relabeled), the
existence of $v$ such that
\begin{equation}\label{eq:compact_vlambda_rewrite}
v^\lambda\to v \quad\text{strongly in } L^2(0,T;H^{1-\theta}(\TT^2)),
\qquad
v^\lambda\rightharpoonup^\ast v \quad\text{in } L^\infty(0,T;H^1(\TT^2)).
\end{equation}
Moreover, up to a further subsequence, there exists $U$ such that
\begin{equation}\label{eq:compact_Ulambda_rewrite}
\partial_t v^\lambda \rightharpoonup U \quad\text{weakly in } L^2(0,T;H^{-1/2}(\TT^2)).
\end{equation}
In particular, $v\in L^\infty(0,T;H^1(\TT^2))\cap W^{1,2}(0,T;H^{-1/2}(\TT^2))$,
and hence
\begin{equation}\label{eq:time_continuity_rewrite}
v\in C\big([0,T];H^{s}(\TT^2)\big)\qquad\text{for every }s\in[-\frac{1}{2},1).
\end{equation}

Since $v^\lambda\in L^\infty(0,T;H^1)$ and $\partial_t v^\lambda$ is bounded in
$L^2(0,T;H^{-1/2})$, after extraction we have $\partial_t v^\lambda\rightharpoonup U$
weakly in $L^2(0,T;H^{-1/2})$ for some $U$. For any
$\varphi\in C_c^\infty((0,T)\times\TT^2)$,
\[
\int_0^T\langle U^\lambda,\varphi\rangle\,dt
=
-\int_0^T\langle v^\lambda,\partial_t\varphi\rangle\,dt.
\]

We now pass to the limit $\lambda\to0$ in the regularized equation
\eqref{eq:v_equation_reg_step2}. The linear terms converge by
\eqref{eq:compact_vlambda_rewrite} and the boundedness of Fourier multipliers on
Sobolev spaces. For the nonlinear terms we use the regularization and the uniform bounds
\eqref{eq:lambda_uniform_bounds}. Consequently, the limit $v^\mu[\bar f]\in L^\infty(0,T;H^1_0(\TT^2))$ satisfies 
\begin{equation}\label{eq:limit_equation_lambda_vonly}
(I+\Upsilon\Lambda)\partial_t v+\delta\,\Lambda^3 v
+\mathcal J_\mu\Big(\Lambda+\frac{\beta}{4}\Lambda^5\Big)\mathcal J_\mu \bar f
+\mathbb N(\mathcal J_\mu\bar f,\,\partial_t v)
=
\varepsilon\,\mathcal Q(v),
\end{equation}

\subsubsection*{\underline{Step 4: Solving the equation for $f$}}

Fix $0<\mu\le1$ and let $T>0$ be the time obtained in Step~3. In particular, for
every $\bar f\in\mathbb X_T$ we dispose of a function
\[
v^\mu[\bar f]\in L^\infty\big(0,T;H^1_0(\TT^2)\big)\cap W^{1,2}\big(0,T;H^{-1/2}(\TT^2)\big),
\qquad
v^\mu=v_0,
\]
which satisfies
\begin{equation}\label{eq:v_equation_mu_bar_f_step4}
(I+\Upsilon\Lambda)\,\partial_t v^\mu[\bar f]+\delta\,\Lambda^3 v^\mu[\bar f]
+\mathcal J_\mu\Big(\Lambda+\frac{\beta}{4}\Lambda^5\Big)\mathcal J_\mu \bar f
+\mathbb N\big(\mathcal J_\mu\bar f,\partial_t v^\mu[\bar f]\big)
=
\varepsilon\,\mathcal Q\big(v^\mu[\bar f]\big),
\end{equation}
where
\[
\mathcal Q(w):=
\frac12\,\Lambda\Big(w^2-|\mathcal R w|^2\Big)-[\Lambda,w]\,w-\nabla_x w\cdot \mathcal R w.
\]

We recall that
\[
\mathbb X_T
:=
\Big\{\bar f\in L^\infty\big(0,T;H^3_0(\TT^2)\big):\ \bar f(0)=f_0,\ 
\|\bar f\|_{L^\infty(0,T;H^3)}\le 2\|f_0\|_{H^3}\Big\},
\]
which records the desired $H^3$ control. 

For $\bar f\in\mathbb X_T$, define
\begin{equation}\label{eq:Tmu_def_step4_final}
(\mathcal T_\mu[\bar f])(t):=f_0+\int_0^t \mathcal{J}_\mu\mathcal{J}_\mu v^\mu[\bar f](s)\,ds,\qquad t\in[0,T].
\end{equation}
 Since
$v^\mu[\bar f]\in L^\infty(0,T;H^1)$, the map $\mathcal T_\mu[\bar f]\in\mathbb X_T$ and
\begin{equation}\label{eq:ft_equals_v_step4_final}
\partial_t(\mathcal T_\mu[\bar f])=\mathcal{J}_\mu\mathcal{J}_\mu v^\mu[\bar f]
\qquad\text{in }L^\infty\big(0,T;H^1(\TT^2)\big).
\end{equation}
Thus, if $f=\mathcal T_\mu[f]$, then $f_t=\mathcal{J}_\mu\mathcal{J}_\mu v^\mu[f]$ and inserting $\bar f=f$ in
\eqref{eq:v_equation_mu_bar_f_step4} yields the coupled $\mu$--regularized system
\begin{align}
(I+\Upsilon\Lambda)\,v_t+\delta\,\Lambda^3 v
+\mathcal J_\mu\Big(\Lambda+\frac{\beta}{4}\Lambda^5\Big)\mathcal J_\mu f
+\mathbb N(\mathcal J_\mu f,v_t)
&=
\varepsilon\,\mathcal Q(v),
\label{eq:v_equation_mu_coupled_step4_final}\\
f_t&=\mathcal{J}_\mu\mathcal{J}_\mu v.
\label{eq:f_equation_mu_coupled_step4_final}
\end{align}
Let $\bar f^{(1)},\bar f^{(2)}\in\mathbb X_T$, and set
$v^{(i)}:=v^\mu[\bar f^{(i)}]$ and $\widetilde v:=v^{(1)}-v^{(2)}$.
Subtracting \eqref{eq:v_equation_mu_bar_f_step4} for $\bar f^{(1)}$ and $\bar f^{(2)}$ gives
\begin{align}\label{eq:vdiff_step4_final}
(I+\Upsilon\Lambda)\partial_t \widetilde v+\delta\Lambda^3\widetilde v
&=
-\mathcal J_\mu\Big(\Lambda+\frac{\beta}{4}\Lambda^5\Big)\mathcal J_\mu(\bar f^{(1)}-\bar f^{(2)})
-\Big(\mathbb N(\mathcal J_\mu\bar f^{(1)},\partial_t v^{(1)})
-\mathbb N(\mathcal J_\mu\bar f^{(2)},\partial_t v^{(2)})\Big)\nonumber\\
&\quad
+\varepsilon\Big(\mathcal Q(v^{(1)})-\mathcal Q(v^{(2)})\Big).
\end{align}
Using bilinearity of $\mathbb N$ in its second argument we split
\[
\mathbb N(\mathcal J_\mu\bar f^{(1)},\partial_t v^{(1)})
-\mathbb N(\mathcal J_\mu\bar f^{(2)},\partial_t v^{(2)})
=
\mathbb N(\mathcal J_\mu(\bar f^{(1)}-\bar f^{(2)}),\partial_t v^{(1)})
+\mathbb N(\mathcal J_\mu\bar f^{(2)},\partial_t\widetilde v).
\]
Testing \eqref{eq:vdiff_step4_final} against $\Lambda \widetilde v$ and arguing as in
Step~3 (using that $\mathcal J_\mu\bar f^{(2)}$ is smooth and bounded in $W^{1,\infty}$
with constants depending on $\mu$ and $\|\bar f^{(2)}\|_{H^3}$) yields, for a.e.\ $t\in(0,T)$,
\begin{equation}\label{eq:vdiff_energy_step4_final}
\frac{d}{dt}\Big(\|\widetilde v\|_{\dot H^{1/2}}^2+\Upsilon\|\widetilde v\|_{\dot H^1}^2\Big)
+\delta\|\widetilde v\|_{\dot H^2}^2
\ \le\ C_{\mu,R}\,\|\mathcal J_\mu(\bar f^{(1)}-\bar f^{(2)})\|_{H^3}^2
+ C_{\mu,R}\,\|\widetilde v\|_{H^1}^2,
\end{equation}
where $R:=\sup_{\bar f\in\mathbb X_T}\|\bar f\|_{L^\infty H^3}\le 2\|f_0\|_{H^3}$ and
$C_{\mu,R}$ depends on $\mu$, the model parameters and $R$, but not on $T$.
Since $\widetilde v(0)=0$, Gr\"onwall implies
\begin{equation}\label{eq:v_Lip_in_f_step4_final}
\|v^\mu[\bar f^{(1)}]-v^\mu[\bar f^{(2)}]\|_{L^\infty(0,T;H^1)}
\ \le\ C_{\mu,R}T\,\|\bar f^{(1)}-\bar f^{(2)}\|_{L^\infty(0,T;H^1)}.
\end{equation}\medskip
To show the contraction of $\mathcal T_\mu$ in $\mathbb X_T$,  let $\bar f^{(1)},\bar f^{(2)}\in\mathbb X_T$. Using \eqref{eq:Tmu_def_step4_final} and
\eqref{eq:v_Lip_in_f_step4_final},
\[
\|\mathcal T_\mu[\bar f^{(1)}]-\mathcal T_\mu[\bar f^{(2)}]\|_{\mathbb X_T}
\le
C_\mu T\,\|v^\mu[\bar f^{(1)}]-v^\mu[\bar f^{(2)}]\|_{L^\infty H^1}
\le
T^2\,C_{\mu,R}\,\|\bar f^{(1)}-\bar f^{(2)}\|_{L^\infty H^1}.
\]
Since $\|h\|_{H^1}\le \|h\|_{H^3}$ and we can take $0<T_\mu<1$, we also have
\[
\|\mathcal T_\mu[\bar f^{(1)}]-\mathcal T_\mu[\bar f^{(2)}]\|_{\mathbb X_T}
\le
T\,C_{\mu,R}\,\|\bar f^{(1)}-\bar f^{(2)}\|_{\mathbb X_T}.
\]
Choose $T_\mu\in(0,T]$ such that $T_\mu\,C_{\mu,R}\le \tfrac12$.
Then $\mathcal T_\mu$ is a strict contraction on $\mathbb X_{T_\mu}$, and Banach's fixed point theorem yields a unique
\[
f^\mu\in \mathbb X_{T_\mu}
\quad\text{such that}\quad
f^\mu=\mathcal T_\mu[f^\mu].
\]
Setting $v^\mu:=v^\mu[f^\mu]$, we obtain a pair $(f^\mu,v^\mu)$ satisfying
\eqref{eq:v_equation_mu_coupled_step4_final}--\eqref{eq:f_equation_mu_coupled_step4_final}
on $(0,T_\mu)$, with $f^\mu(0)=f_0$ and $v^\mu(0)=v_0$. \medskip

At this stage we know $f^\mu\in L^\infty(0,T_\mu;H^3)$ and $f_t^\mu=v^\mu\in L^\infty(0,T_\mu;H^1)$.
In Step~5 we prove $\mu$--uniform a priori estimates at the $H^3$ level for the coupled
system \eqref{eq:v_equation_mu_coupled_step4_final}--\eqref{eq:f_equation_mu_coupled_step4_final}.
In particular, there exists a time $T_*>0$, depending only on
$\|f_0\|_{H^3}+\|v_0\|_{H^1}$ and the parameters $(\Upsilon,\delta,\beta)$ but
\emph{independent of $\mu$}, such that, for $T_* \le T_\mu$,
\[
\sup_{0\le t\le T_*}\|f^\mu(t)\|_{H^3}\le 2\|f_0\|_{H^3}.
\]
Thus $f^\mu\in\mathbb X_{T_*}$ and $f_t^\mu\in L^\infty(0,T_*;H^1)$, which is the
regularity required to pass to the limit $\mu\to0$ in Step~5.

\subsubsection*{\underline{Step 5: Passing to the limit as $\mu\to 0$}}

Let $(f^\mu,v^\mu)$ be the $\mu$--regularized solution from Step~4, with $v^\mu=f_t^\mu$.
Repeating the energy multipliers from Step~3 (now with $\bar f=f^\mu$ and $v=v^\mu$)
yields, for a.e.\ $t\in(0,T)$,
\begin{align}
\frac{1}{2}\frac{d}{dt}\Big(
\|f_t^\mu\|_{H^{1/2}}^2+\Upsilon\|f_t^\mu\|_{H^{1}}^2
+\|f^\mu\|_{H^1}^2+\frac{\beta}{4}\|f^\mu\|_{H^3}^2
\Big)
&\le \|f_t^\mu\|_{H^1}^4+C\|\mathcal{J}_\mu f^\mu\|_{H^2}^2\|f_{tt}^\mu\|_{L^2}^2,
\label{eq:mu_energy_1_final}\\[0.3em]
\frac{1}{2}\|f_{tt}^\mu\|_{H^{-1/2}}^2+\frac{\Upsilon}{4}\|f_{tt}^\mu\|_{L^2}^2
+\frac{\delta}{2}\frac{d}{dt}\|f_t^\mu\|_{H^1}^2
&\le \|f_t^\mu\|_{H^1}^4.
\label{eq:mu_energy_2_final}
\end{align}
Define
\[
Y^\mu(t):=
\|f_t^\mu(t)\|_{H^{1/2}}^2+\Upsilon\|f_t^\mu(t)\|_{H^{1}}^2
+\|f^\mu(t)\|_{H^1}^2+\frac{\beta}{4}\| f^\mu(t)\|_{H^3}^2.
\]
Since $\mathcal J_\mu$ is bounded on $H^s$ and $\|g\|_{H^2}\lesssim \|g\|_{H^3}$, we have
\[
\|\mathcal J_\mu f^\mu\|_{H^2}^2\ \lesssim\ \|f^\mu\|_{H^3}^2\ \lesssim\ Y^\mu,
\qquad
\|f_t^\mu\|_{H^1}^2\ \lesssim\ Y^\mu.
\]

Integrating \eqref{eq:mu_energy_2_final} on $[0,t]$ and using $\Upsilon>0$ gives
\begin{equation}\label{eq:ftt_L2_uniform_mu}
\int_0^t \|f_{tt}^\mu(s)\|_{L^2}^2\,ds
\ \lesssim_{\Upsilon}\ 
\|v_0\|_{H^1}^2+\int_0^t \|f_t^\mu(s)\|_{H^1}^4\,ds
\ \lesssim\ 
1+\int_0^t \big(Y^\mu(s)\big)^2\,ds.
\end{equation}
Now integrate \eqref{eq:mu_energy_1_final} on $[0,t]$ and estimate
\[
\int_0^t \|\mathcal J_\mu f^\mu\|_{H^2}^2\|f_{tt}^\mu\|_{L^2}^2
\ \le\
\Big(\sup_{0\le s\le t}Y^\mu(s)\Big)\int_0^t \|f_{tt}^\mu(s)\|_{L^2}^2\,ds,
\]
then use \eqref{eq:ftt_L2_uniform_mu}. Writing $Z^\mu(t):=\sup_{0\le s\le t}Y^\mu(s)$ we obtain
\begin{equation}\label{eq:Zmu_poly_final}
Z^\mu(t)\ \le\ C_0 + C_1\,t\,(Z^\mu(t))^2 + C_2\,t\,(Z^\mu(t))^3,
\qquad t\in[0,T],
\end{equation}
where $C_0\sim 1+Y^\mu(0)$ and $C_1,C_2$ depend only on the model parameters and on
$\|f_0\|_{H^3}+\|v_0\|_{H^1}$, and are independent of $\mu$.
A standard continuity argument applied to \eqref{eq:Zmu_poly_final} yields a time $T_*>0$
(depending only on $(f_0,v_0)$, but not on $\mu$) such that
\begin{equation}\label{eq:Ymu_uniform_final}
\sup_{0\le t\le T_*}Y^\mu(t)\ \le\ 2Y^\mu(0)
\qquad\text{for all }0<\mu\le1.
\end{equation}
In particular, uniformly in $0<\mu\le1$,
\begin{equation}\label{eq:mu_uniform_bounds_final}
\{f^\mu\}\ \text{is bounded in }L^\infty(0,T_*;H^3(\TT^2)),
\qquad
\{f_t^\mu\}\ \text{is bounded in }L^\infty(0,T_*;H^1(\TT^2)).
\end{equation}
\begin{equation}\label{eq:mu_uniform_bounds_final_2}
\{f_{tt}^\mu\}\ \text{is bounded in }L^2(0,T_*;H^{-1/2}(\TT^2)).
\end{equation} 
By \eqref{eq:mu_uniform_bounds_final} the family $\{f^\mu\}$ is equi-Lipschitz in time
with values in $H^1$ and bounded in $L^\infty_tH^3_x$. As in Step~3, Simon's compactness
theorem (using $H^3\Subset H^{2+\theta}\hookrightarrow H^1$) yields, after extracting a
subsequence, $f^\mu\to f$ strongly in $C([0,T_*];H^{2+\theta}(\TT^2))$ for any $\theta\in(0,1)$,
and $f^\mu\rightharpoonup^\ast f$ in $L^\infty(0,T_*;H^3)$. Moreover,
$f_t^\mu\rightharpoonup^\ast f_t$ in $L^\infty(0,T_*;H^1)$ and
$f_{tt}^\mu\rightharpoonup f_{tt}$ weakly in $L^2(0,T_*;H^{-1/2})$.

Using these convergences, the fact that $\mathcal J_\mu\to I$ strongly on $H^s$ as $\mu\to0$,
and the continuity of $\mathbb N(\cdot,\cdot)$ and $\mathcal Q(\cdot)$ at this regularity,
we may pass to the limit $\mu\to0$ in the $\mu$--regularized second-order equation and conclude that $f$ solves
\eqref{eq:f_equation_compact_closed} on $(0,T_*)\times\TT^2$, with $f(\cdot,0)=f_0$ and
$f_t(\cdot,0)=v_0$. In particular,
\[
f\in L^\infty(0,T_*;H^3_{0}(\TT^2)),\qquad f_t\in L^\infty(0,T_*;H^1_{0}(\TT^2)).
\]

\subsubsection*{\underline{Step 6: Uniqueness}}
We now show that the solution constructed above is unique (in the natural class
given by Theorem~\ref{thm:WPbi}). Recall that we have obtained a pair $(f,v)$
satisfying
\begin{align}
(I+\Upsilon\Lambda)\,v_t+\delta\,\Lambda^3 v
+\Big(\Lambda+\frac{\beta}{4}\Lambda^5\Big)f
+\mathbb N\!\big(f,v_t\big)
&=
\varepsilon\,\mathcal Q(v),
\label{eq:v_equation_mu_coupled_step4_final2}\\
f_t&= v.
\label{eq:f_equation_mu_coupled_step4_final2}
\end{align}

Let $(f^{(1)},v^{(1)})$ and $(f^{(2)},v^{(2)})$ be two solutions of
\eqref{eq:v_equation_mu_coupled_step4_final2}--\eqref{eq:f_equation_mu_coupled_step4_final2}
on $[0,T]$ with the same initial data. Setting
\[
\widetilde f:=f^{(1)}-f^{(2)},\qquad \widetilde v:=v^{(1)}-v^{(2)},
\]
we obtain a coupled system for $(\widetilde f,\widetilde v)$ with zero initial
data. Testing the $\widetilde v$--equation against $\Lambda \widetilde v$ (and,
when needed, using the relation $\widetilde f_t=\widetilde v$), and arguing as
in the a priori estimates of Steps~3--5, we derive an energy inequality for a
suitable quantity $Z(t)$ controlling $\|\widetilde f\|_{H^3}$ and
$\|\widetilde v\|_{H^1}$. In particular, for $t\in[0,T]$ one obtains an estimate
of the form
\begin{equation}\label{eq:Zmu_poly_final2}
Z(t)\ \le\ C_1\,t\,(Z(t))^2 + C_2\,t\,(Z(t))^3,
\qquad Z(0)=0,
\end{equation}
where the constants $C_1,C_2$ depend only on the parameters of the model and on
the \emph{a priori} bounds for the two solutions on $[0,T]$. For $T>0$ chosen as
in the existence argument (so that the right-hand side can be absorbed), the
polynomial inequality \eqref{eq:Zmu_poly_final2} forces $Z(t)\equiv0$ on $[0,T]$.
Consequently $\widetilde f\equiv0$ and $\widetilde v\equiv0$, and hence
$f^{(1)}\equiv f^{(2)}$ and $v^{(1)}\equiv v^{(2)}$ on $[0,T]$. \medskip

Therefore the solution is unique in the stated class, which completes the proof.
\end{proof}

\begin{remark}
Let us emphasize that we do not currently obtain an analogue of Theorem~\ref{thm:WPbi} for the alternative bidirectional model \eqref{eq:f_model_compact_final_again}. While one can construct regularized/approximate solutions for that equation by standard procedures, we are unable to close the corresponding a priori estimates uniformly at the level required to pass to the limit. Establishing a well-posedness theory for this second bidirectional closure therefore remains an open problem.
\end{remark}

\section{Well-posedness results for the unidirectional models}\label{Wp:uni}
In this section we study the Cauchy problem for the unidirectional models
derived in Subsection~\ref{subsec:uni:models} as an $\mathcal{O}(\varepsilon)$ approximation
of the full hydroelastic system. The analysis relies on energy estimates adapted
to the nonlocal operator structure of the model, together with standard
multilinear and commutator bounds for the Fourier multipliers appearing in the
equation. \medskip

We first establish a local well-posedness result for the unidirectional model \eqref{eq:Ft_final_ab_noG}, stated as follows.
\begin{theorem}[Local well-posedness]\label{th:LWP:uni}
Let $\Upsilon>0$ and $\delta, \beta>0$.
Assume that $F_0\in H^2(\TT)$ has zero spatial mean and consider the
unidirectional model \eqref{eq:Ft_final_ab_noG}. Then there exist $T>0$, depending only on
$\|F_0\|_{H^2(\TT)}$ and the parameters of the model, and a unique solution
\[
F\in C([0,T];H^2(\TT)),\qquad F(\cdot,0)=F_0.
\]
\end{theorem}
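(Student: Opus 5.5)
The approach is to treat \eqref{eq:Ft_final_ab_noG} as a semilinear parabolic equation: a linear part with a strongly dissipative symbol, plus a quadratic forcing that is smoothing because $a+b\mathcal H$ gains two derivatives. Write it as $F_\tau=\frac1\varepsilon\mathcal L F-\mathcal B(F,F)$, with
\[
\mathcal L:=(a+b\mathcal H)\Big((I+\Upsilon\Lambda)\partial_\xi+\tfrac{\beta}{4}\Lambda\partial_\xi^3+\delta\Lambda\partial_\xi^2+\mathcal H\Big),
\qquad
\mathcal B(F,G):=(a+b\mathcal H)\big(2F_\xi\Lambda G-\comm{\Lambda}{F}G_\xi\big).
\]

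\textbf{Step 1: the symbol of $\mathcal L$.} On $k\neq0$, with $s=\sgn k$, the inner operator has symbol $R_k+iI_k$, where $R_k=-\delta|k|^3$ and $I_k=k(1+\Upsilon|k|)-\tfrac\beta4|k|k^3-s$. The factor $a+b\mathcal H$ has symbol $a_k-ib_ks$. Hence
\[
\Re\,\ell_k=-\delta a_k|k|^3+b_k\Big(|k|+\Upsilon k^2-\tfrac\beta4k^4-1\Big).
\]
Since $a_k\sim 2\Upsilon/(\delta^2|k|^3)$ and $b_k\sim 1/(\delta k^2)$ for large $|k|$, this gives $\Re\ell_k\le -c_0k^2+C$ with $c_0\approx\beta/(4\delta)>0$, and $\Re\ell_k$ is bounded on the finitely many low modes. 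The imaginary part is real-odd and contributes only skew-adjoint (dispersive) terms in $L^2$ energy identities. Consequently $e^{\tau\mathcal L/\varepsilon}$ is an analytic-type contraction semigroup up to $e^{C\tau/\varepsilon}$, and for every $s$
\[
\langle \Lambda^sF,\Lambda^s\mathcal L F\rangle\le -c_0\|F\|_{\dot H^{s+1}}^2+C\|F\|_{\dot H^s}^2 .
\]
Mean zero is preserved, since every operator kills the zero mode.

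\textbf{Step 2: the nonlinear estimate.} The multipliers satisfy $|a_k|+|b_k|\lesssim\langle k\rangle^{-2}$, so
\[
\|\mathcal B(F,G)\|_{H^2}\lesssim\|F_\xi\Lambda G\|_{L^2}+\|\comm{\Lambda}{F}G_\xi\|_{L^2}.
\]
By \eqref{eq:Sobolev_1d_embed_hom} and \eqref{eq:alg_inhom_1d}, each product is bounded by $\|F\|_{H^2}\|G\|_{H^2}$: put $F_\xi$ in $L^\infty$ (since $H^1\subset L^\infty$) and $\Lambda G$ in $L^2$, and expand the commutator into $\Lambda(FG_\xi)$ and $F\Lambda G_\xi$, both controlled in $H^{-1}\cap L^2$-type norms by $\|F\|_{H^2}\|G\|_{H^2}$. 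Crude bounds suffice here because of the two-derivative gain. Bilinearity then gives the Lipschitz bound
\[
\|\mathcal B(F,F)-\mathcal B(G,G)\|_{H^2}\lesssim(\|F\|_{H^2}+\|G\|_{H^2})\|F-G\|_{H^2}.
\]

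\textbf{Step 3: existence, uniqueness and continuity.} By Steps 1 and 2, the Duhamel map
\[
F\mapsto e^{\tau\mathcal L/\varepsilon}F_0-\int_0^\tau e^{(\tau-\sigma)\mathcal L/\varepsilon}\mathcal B(F,F)\,d\sigma
\]
is a contraction on a ball of $C([0,T];H^2_0)$ of radius $2e^{CT/\varepsilon}\|F_0\|_{H^2}$, provided $T\lesssim(1+\|F_0\|_{H^2})^{-1}$ with constants depending on $\Upsilon,\delta,\beta,\varepsilon$. Strong continuity of the semigroup gives $F\in C([0,T];H^2)$, and the contraction gives uniqueness in this class. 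To match the paper's framework, one can equivalently run a Friedrichs scheme, $F^\nu_\tau=\frac1\varepsilon\mathcal J_\nu\mathcal L\mathcal J_\nu F^\nu-\mathcal J_\nu\mathcal B(\mathcal J_\nu F^\nu,\mathcal J_\nu F^\nu)$, solved as an ODE in $H^2$. This yields the uniform bound $\frac{d}{d\tau}\|F^\nu\|_{H^2}^2\le C\|F^\nu\|_{H^2}^2+C\|F^\nu\|_{H^2}^3$ via Steps 1 and 2, after which one passes to the limit by compactness. An $L^2$ difference estimate gives uniqueness, and time continuity follows from the dissipation via the standard Lions--Magenes argument.

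The main obstacle is Step 1: establishing the sign and size of $\Re\ell_k$ uniformly in $k$, in particular that the $-\frac\beta4 b_kk^4$ term dominates. The low modes, where $b_k(|k|+\Upsilon k^2-1)$ can be positive, only produce a harmless bounded growth term. If the precise constants are awkward, the fallback is to note that even without dissipation, the bound $|\Re\ell_k|\lesssim k^2$ still requires the sign; this is exactly the content of the symbol computation, so I would verify the asymptotics of $a_k$ and $b_k$ first.
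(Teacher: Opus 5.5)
Your argument is correct, and it takes a genuinely different route from the paper.

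\textbf{How the two routes differ.} The paper never passes to a semigroup. It tests the symmetrized form \eqref{eq:Mstar_step_expanded3} against $F$, so that $\frac14\langle\mathcal M^\ast\mathcal M F,F\rangle$ serves as an $H^2$-equivalent energy. It checks the cancellations and signs of the linear terms by hand, and bounds the nonlinear terms by $\|F\|_{L^\infty}\|F\|_{\dot H^2}^2$ to obtain $E'\le CE^{3/2}$. It then needs a Friedrichs scheme, a compactness limit, and a separate Gr\"onwall estimate for the difference of two solutions.

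Your Step~1 is the Fourier-side counterpart of those cancellations, and it is in fact exact. A direct computation gives $\operatorname{Re}\ell_k=-\delta\,\big(k^2+|k|^3+\Upsilon k^4+\tfrac{\beta}{4}k^6\big)\big/\big(4(1+\Upsilon|k|)^2+\delta^2k^4\big)<0$ for every $k\neq0$. So the low-mode worry in your last paragraph does not arise, and for $\varepsilon>0$ the semigroup $e^{\tau\mathcal L/\varepsilon}$ is a contraction on every $H^s_0$.

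Your Step~2 makes explicit what the paper's weighted energy hides: $a+b\mathcal H$ gains two derivatives, so the quadratic term is a bounded bilinear map on $H^2$. The equation is then a Lipschitz perturbation of a contraction semigroup.

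\textbf{What each route buys.} Your route gives existence, continuity with values in $H^2$, Lipschitz dependence on the data, and uniqueness from a single fixed point. It needs no compactness and does not use the dissipation at all. The paper's compactness step, by contrast, only yields strong convergence in $C([0,T];H^{1+\theta})$, and upgrading to $C([0,T];H^2)$ requires an extra argument that the paper does not supply. What the paper's route buys is an explicit functional and dissipation inequality, $E'+\frac{\beta\delta}{8}\|F\|_{\dot H^3}^2\le C\|F\|_{L^\infty}\|F\|_{\dot H^2}^2$, which it reuses directly for small-data global existence and decay in Theorem~\ref{th:GWP_decay_H2}. Your symbol bound $\operatorname{Re}\ell_k\le -c\,k^2$ for all $k\neq0$ would give that result too, via a weighted-in-time Duhamel argument.

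\textbf{Two small fixes.}
\begin{enumerate}
\item The contraction only gives uniqueness inside the ball. Add the one-line Gr\"onwall bound on the Duhamel formula, using the Lipschitz estimate from Step~2, to get uniqueness in all of $C([0,T];H^2)$.
\item With the paper's convention $a(0)=\tfrac12$, the operator $a+b\mathcal H$ does not annihilate constants. Conservation of the mean therefore rests on $\langle 2F_\xi\Lambda F-\comm{\Lambda}{F}F_\xi\rangle=0$, as in the paper's Step~1. Setting both symbols to zero at $k=0$ gives the same dynamics precisely because this mean vanishes.
\end{enumerate}
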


\begin{proof}[Proof of Theorem \ref{th:LWP:uni}]
 The argument combines classical a priori energy estimates with a standard
regularization scheme based on mollifiers, cf. \cite{MajdaBertozzi2002}.
We first derive the required a priori bounds, then construct solutions at the
regularized level and pass to the limit, and finally establish uniqueness.
For clarity, we decompose the proof into several steps.
\subsubsection*{\underline{Step 1: conservation of the mean}}
Let $\langle F(\tau)\rangle:=\frac{1}{2\pi}\int_{\TT}F(\xi,\tau)\,d\xi$.
We claim that any sufficiently smooth real-valued solution of \eqref{eq:Ft_final_ab_noG}
satisfies
\[
\langle F(\tau)\rangle=\langle F_0\rangle\qquad\text{for all }\tau\in[0,T].
\]

Indeed, taking the spatial mean of \eqref{eq:Ft_final_ab_noG}, the linear part has
zero average (it is a combination of $\xi$--derivatives and Fourier multipliers
annihilating constants). With the convention $a(0)=\tfrac12$ and $b(0)=0$, we obtain
\[
\partial_\tau\langle F\rangle
=
-\frac12\,\Big\langle\,2F_\xi\,\Lambda F-\comm{\Lambda}{F}F_\xi\,\Big\rangle.
\]
Expanding the commutator,
\[
2F_\xi\,\Lambda F-\comm{\Lambda}{F}F_\xi
=
2F_\xi\,\Lambda F-\Lambda(FF_\xi)+F\,\Lambda F_\xi,
\]
and using that $\langle \Lambda(\cdot)\rangle=0$, it remains to show that
$\langle 2F_\xi\Lambda F+F\Lambda F_\xi\rangle=0$. In one dimension
$\Lambda=\mathcal H\partial_\xi$, hence $\Lambda F=\mathcal H F_\xi$ and
$\Lambda F_\xi=\mathcal H F_{\xi\xi}$. Therefore, 
\[
\langle F_\xi\Lambda F\rangle=\frac{1}{2\pi}\int_{\TT}F_\xi\,\mathcal H F_\xi\,d\xi=0,
\qquad
\langle F\Lambda F_\xi\rangle=\frac{1}{2\pi}\int_{\TT}F\,\mathcal H F_{\xi\xi}\,d\xi
=-\frac{1}{2\pi}\int_{\TT}F_\xi\,\mathcal H F_\xi\,d\xi=0,
\]
Hence $\partial_\tau\langle F\rangle=0$ and the mean is conserved.
\smallskip
In particular, if $\langle F_0\rangle=0$, then $F(\cdot,\tau)$ remains mean-zero
for all times of existence.

\subsubsection*{\underline{Step 2: a priori estimates}}
In order to derive the a priori estimates it is convenient to work with a symmetrized formulation of the evolution, namely 
\begin{equation}\label{eq:Mstar_step_expanded3}
	\varepsilon(\mathcal M^\ast\mathcal M)F_\tau
	=\mathcal M^\ast\!\left((I+\Upsilon\Lambda)F_\xi+\frac{\beta}{4}\Lambda F_{\xi\xi\xi}
	+\delta\Lambda F_{\xi\xi}+\mathcal H F\right)
	-\varepsilon\,\mathcal M^\ast\!\left(2 F_\xi\,\Lambda F-\comm{\Lambda}{F}F_\xi\right).
\end{equation}
where
 \[\mathcal M^\ast\mathcal M =4(I+\Upsilon\Lambda)^2-\delta^2\Lambda^2\partial_\xi^2, \quad
\mathcal M^\ast=2(I+\Upsilon\Lambda)-\delta\,\Lambda\partial_\xi. \]

Taking the $L^2(\TT)$ inner product of \eqref{eq:Mstar_step_expanded3} with $F$ and integrating
by parts in $\xi$, we obtain the evolution identity
\begin{align}
\frac{d}{d\tau}
\Big(\|F\|_{L^2}^2+2\Upsilon\|F\|_{\dot H^{1/2}}^2+\Upsilon^2\|F\|_{\dot H^{1}}^2+\frac{\delta^2}{4}\|F\|_{\dot H^{2}}^2\Big)
&=
\!\int_{\TT}\!(I+\Upsilon\Lambda)\Big((I+\Upsilon\Lambda)F_\xi\Big)\,F\,d\xi
+\frac{\beta}{4}\!\int_{\TT}\!(I+\Upsilon\Lambda)\Lambda F_{\xi\xi\xi}\,F\,d\xi \nonumber
\\[0.3em]
&\quad
+\delta\!\int_{\TT}\!(I+\Upsilon\Lambda)\Lambda F_{\xi\xi}\,F\,d\xi
+\!\int_{\TT}\!(I+\Upsilon\Lambda)\mathcal HF\,F\,d\xi
\nonumber\\[0.3em]
&\quad
+\frac{\delta}{2}\!\int_{\TT}\!(I+\Upsilon\Lambda)F_\xi\,\Lambda F_\xi\,d\xi
+\frac{\beta\delta}{8}\!\int_{\TT}\!\Lambda F_{\xi\xi\xi}\,\Lambda F_\xi\,d\xi\nonumber\\[0.3em]
&\quad+\frac{\delta^2}{2}\!\int_{\TT}\!\Lambda F_{\xi\xi}\,\Lambda F_\xi\,d\xi
+\frac{\delta}{2}\!\int_{\TT}\!\mathcal HF\,\Lambda F_\xi\,d\xi
\nonumber\\[0.3em]
&\quad
-2\!\int_{\TT}\!(I+\Upsilon\Lambda)\big(F_\xi\Lambda F\big)\,F\,d\xi
+\!\int_{\TT}\!(I+\Upsilon\Lambda)\big(\comm{\Lambda}{F}F_\xi\big)\,F\,d\xi
\nonumber\\[0.3em]
&\quad
-\delta\!\int_{\TT}\!F_\xi(\Lambda F)(\Lambda F_\xi)\,d\xi
+\frac{\delta}{2}\!\int_{\TT}\!\big(\comm{\Lambda}{F}F_\xi\big)\,(\Lambda F_\xi)\,d\xi=\displaystyle\sum_{j=1}^{12} I_{j}.
\nonumber
\end{align}
Using that $A:=I+\Upsilon\Lambda$ and $\Lambda$ are self-adjoint on $L^2(\TT)$,
that $\partial_\xi$ commutes with $A$ and $\Lambda$, and that $\mathcal H^\ast=-\mathcal H$,
we obtain the cancellations $I_1=I_{2}=I_{4}=I_{7}=0$. Moreover, we also find that
\[
I_6
=\frac{\beta\delta}{8}\int_{\TT}(\Lambda F_\xi)_{\xi\xi}\,(\Lambda F_\xi)\,d\xi
=-\frac{\beta\delta}{8}\,\|F\|_{\dot H^3(\TT)}^2, \quad I_3
=-\delta\Big(\|F\|_{\dot H^{3/2}}^2+\Upsilon\|F\|_{\dot H^{2}}^2\Big), \quad I_{8}=-\frac{\delta}{2}\,\|F\|_{\dot H^1(\TT)}^2.
\]
We now bound the nonlinear contributions $I_9,I_{10},I_{11},I_{12}$.
Let $A:=I+\Upsilon\Lambda$. By self-adjointness of $A$,
\[
I_9=-2\int_{\TT}A(F_\xi\Lambda F)\,F\,d\xi=-2\langle F_\xi\Lambda F,AF\rangle,
\qquad
I_{10}=\int_{\TT}A([\Lambda,F]F_\xi)\,F\,d\xi=\langle [\Lambda,F]F_\xi,AF\rangle.
\]
Hence, integrating by parts and using Cauchy--Schwarz,
\begin{align}
|I_9|
&\le \norm{F}_{L^{\infty}}\norm{F}_{\dot H^2(\TT)}^2
\label{eq:I9_bound}
\end{align}
Similarly, to bound $I_{10}$ we can expand the commutator and use integration by parts to find that 
\begin{align}
|I_{10}|
&\le  \norm{F}_{L^{\infty}}\norm{F}_{\dot H^2(\TT)}^2
\label{eq:I10_bound}
\end{align}
For $I_{11}$ and $I_{12}$ we similarly have
\[
I_{11}=\frac{\delta}{2}\int_{\TT}F_{\xi\xi}(\Lambda F)^{2}\,d\xi,
\qquad
I_{12}=\frac{\delta}{2}\int_{\TT} \left(\Lambda(F F_{\xi})-F\Lambda F_{\xi}\right)\Lambda F_{\xi}\,d\xi.
\]
By means of Gagliardo-Nirenberg interpolation inequality \eqref{eq:GN_homogeneous_W14}
\begin{align}
|I_{11}|
&\le \delta\,\|F\|_{\dot H^2(\TT)}\,\| \Lambda F\|_{L^4}^{2}\le C\delta\, \norm{F}_{L^{\infty}}\|F\|_{\dot H^2(\TT)}^{2}
\label{eq:I11_bound}
\end{align}
On the other hand, using Cauchy-Schwarz inequality and Moser estimate \eqref{eq:alg_inhom_1d} we find that
\begin{align}
I_{12}\leq \frac{\delta}{4} \norm{\partial_{\xi}\Lambda\left(F^{2}\right)}_{L^{2}} \norm{\Lambda F_{\xi}}_{L^{2}} +  \frac{\delta}{2}\norm{F}_{L^{\infty}}\norm{\Lambda F_{\xi}}_{L^{2}}^{2} 
&\le C \norm{F}_{L^{\infty}}\norm{F}_{\dot{H}^{2}(\TT)}^{2}
\end{align}
Therefore, combining all the previous estimates, we have shown that
\begin{align}
	\frac{d}{d\tau}
	\Big(\|F\|_{L^2}^2+2\Upsilon\|F\|_{\dot H^{1/2}}^2+\Upsilon^2\|F\|_{\dot H^{1}}^2+\frac{\delta^2}{4}\|F\|_{\dot H^{2}}^2\Big) +\frac{\beta\delta}{8}\,\|F\|_{\dot H^3(\TT)}^2\le C  \norm{F}_{L^{\infty}}\norm{F}_{\dot{H}^{2}(\TT)}^{2}
\end{align}
Define
\[
E(\tau):=\|F\|_{L^2}^2+2\Upsilon\|F\|_{\dot H^{1/2}}^2+\Upsilon^2\|F\|_{\dot H^{1}}^2+\frac{\delta^2}{4}\|F\|_{\dot H^{2}}^2.
\]
Moreover by means of the Sobolev embedding \eqref{eq:Sobolev_1d_embed_hom}, we have that
$\|F\|_{L^\infty}\lesssim \|F\|_{H^2}\lesssim E(\tau)^{1/2}$.
Therefore the energy inequality implies
\[
E'(\tau)+\frac{\beta\delta}{8}\|F\|_{\dot H^3}^2
\le C\,\|F\|_{L^\infty}\,\|F\|_{\dot H^2}^2
\lesssim C\,E(\tau)^{3/2},
\]
and in particular $E'(\tau)\le C E(\tau)^{3/2}$.
Solving this differential inequality gives
\[
E(\tau)^{-1/2}\ge E(0)^{-1/2}-\frac{C}{2}\tau,
\]
so for $0\le \tau\le T_*:=\frac{1}{C}E(0)^{-1/2}$ we have the uniform bound
$E(\tau)\le 4E(0)$.

\subsubsection*{\underline{Step 3: existence via mollification}}
We define the mollified unknown $F^\nu$ as the unique solution to the regularized
Cauchy problem
\begin{equation}\label{eq:mollified_system_Jnu}
	\begin{cases}
		\partial_\tau F^\nu
		=\mathcal J_\nu\Bigg\{
		\dfrac{1}{\varepsilon}\big(a+b\mathcal H\big)\!\Big(
		(I+\Upsilon\Lambda)\partial_\xi(\mathcal J_\nu F^\nu)
		+\dfrac{\beta}{4}\Lambda\partial_\xi^3(\mathcal J_\nu F^\nu)
		+\delta\Lambda\partial_\xi^2(\mathcal J_\nu F^\nu)
		+\mathcal H(\mathcal J_\nu F^\nu)\Big)\\[0.4em]
		\hspace{2.8cm}
		-\big(a+b\mathcal H\big)\!\Big(
		2\,\partial_\xi(\mathcal J_\nu F^\nu)\,\Lambda(\mathcal J_\nu F^\nu)
		-\comm{\Lambda}{\mathcal J_\nu F^\nu}\,\partial_\xi(\mathcal J_\nu F^\nu)
		\Big)
		\Bigg\},\\[0.4em]
		F^\nu(\cdot,0)=\mathcal J_\nu F_0 .
	\end{cases}
\end{equation}

Since $\mathcal J_\nu$ is smoothing, the right-hand side of
\eqref{eq:mollified_system_Jnu} defines a locally Lipschitz vector field on
$H^2(\TT)$.  Therefore, by the Picard--Lindel\"of
theorem in Banach spaces, there exists $T_\nu>0$ and a unique solution
\[
F^\nu\in C^1([0,T_\nu];H^2(\TT)).
\]
Because $\mathcal J_\nu$ is self-adjoint on $L^2(\TT)$ and commutes with
$\partial_\xi$, $\Lambda$, $\mathcal H$, $(I+\Upsilon\Lambda)$ and the multipliers
$a,b$, the a priori energy computation in Step~2 can be repeated for the mollified
system \eqref{eq:mollified_system_Jnu}. The only modification is that $\mathcal J_\nu$
may be shifted between factors using $\langle \mathcal J_\nu G,F^\nu\rangle
=\langle G,\mathcal J_\nu F^\nu\rangle$. Consequently, the same cancellations and
sign identities hold, and we obtain the same differential inequality for the
mollified energy $E^\nu(\tau)$ with constants independent of $\nu$.  Denoting
\[
E^\nu(\tau)
:=\|F^\nu\|_{L^2}^2
+2\Upsilon\|F^\nu\|_{\dot H^{1/2}}^2
+\Upsilon^2\|F^\nu\|_{\dot H^{1}}^2
+\frac{\delta^2}{4}\|F^\nu\|_{\dot H^{2}}^2,
\]
we obtain, for $\tau\in[0,T_\nu)$,
\begin{equation}\label{eq:Enu_ode}
	\frac{d}{d\tau}E^\nu(\tau)\le C\,(E^\nu(\tau))^{3/2},
\end{equation}
with a constant $C$ independent of $\nu$. Solving \eqref{eq:Enu_ode} yields a time
$T_*=T_*(E(0))>0$ such that
\begin{equation}\label{eq:Enu_uniform}
	\sup_{0\le \tau\le T_*}E^\nu(\tau)\le 4E(0)
	\qquad\text{for all }\nu\ge1.
\end{equation}
In particular, $T_\nu\ge T_*$ and $\{F^\nu\}$ is bounded in
$L^\infty([0,T_*];H^2(\TT))$ uniformly in $\nu$. \medskip

By the uniform bound \eqref{eq:Enu_uniform} and the mollified equation
\eqref{eq:mollified_system_Jnu}, the sequence $\{F^\nu\}$ is bounded in
$L^\infty([0,T_*];H^2(\TT))$ and $\{\partial_\tau F^\nu\}$ is bounded in
$L^\infty([0,T_*];H^1(\TT))$, uniformly in $\nu$. Hence, after extracting a
subsequence, $F^\nu\to F$ strongly in $C([0,T_*];H^{1+\theta}(\TT))$ for every
$\theta\in(0,1)$ and weakly-$\ast$ in $L^\infty([0,T_*];H^2(\TT))$. Using the
continuity of Fourier multipliers on Sobolev spaces and standard
product/commutator stability at the $H^2$ level, we may pass to the limit in the
mollified equation and conclude that $F$ solves \eqref{eq:Ft_final_ab_noG} on
$[0,T_*]$ with $F(\cdot,0)=F_0$, and moreover $F\in C([0,T_*];H^2(\TT))$.

\subsubsection*{\underline{Step 4: uniqueness}}

Let $F$ and $G$ be two solutions of \eqref{eq:Mstar_step_expanded3} on $[0,T]$
such that
\[
F,G\in L^\infty([0,T];H^2(\TT)),\qquad F(\cdot,0)=G(\cdot,0)=F_0.
\]
Set $W:=F-G$. Subtracting \eqref{eq:Mstar_step_expanded3} (written for $F$ and $G$)
yields
\begin{equation}\label{eq:W_symmetrized_eps1}
	(\mathcal M^\ast\mathcal M)W_\tau
	=\mathcal M^\ast\!\left((I+\Upsilon\Lambda)W_\xi+\frac{\beta}{4}\Lambda W_{\xi\xi\xi}
	+\delta\Lambda W_{\xi\xi}+\mathcal H W\right)
	-\mathcal M^\ast\!\big(\mathcal N(F)-\mathcal N(G)\big),
\end{equation}
where $\mathcal N(U):=2U_\xi\,\Lambda U-\comm{\Lambda}{U}U_\xi$.

Testing \eqref{eq:W_symmetrized_eps1} against $W$ in $L^2(\TT)$ and repeating the
energy computation of Step~2 gives
\begin{equation}\label{eq:EW_diff_ineq_start_eps1}
	\frac{d}{d\tau}E_W(\tau)+\frac{\beta\delta}{8}\,\|W\|_{\dot H^3(\TT)}^2
	\;\lesssim\;
	\big|\langle \mathcal M^\ast(\mathcal N(F)-\mathcal N(G)),W\rangle\big|,
\end{equation}
with
\[
E_W(\tau):=\|W\|_{L^2}^2+2\Upsilon\|W\|_{\dot H^{1/2}}^2+\Upsilon^2\|W\|_{\dot H^{1}}^2
+\frac{\delta^2}{4}\|W\|_{\dot H^{2}}^2.
\]
Moreover, expanding with $F=G+W$ we have
\begin{equation}\label{eq:N_diff_expanded_eps1}
	\mathcal N(F)-\mathcal N(G)
	=
	2W_\xi\,\Lambda F+2G_\xi\,\Lambda W
	-\comm{\Lambda}{W}F_\xi-\comm{\Lambda}{G}W_\xi.
\end{equation}
Using $H^2(\TT)\hookrightarrow W^{1,\infty}(\TT)$ and the standard commutator bound
$\|\comm{\Lambda}{f}g\|_{L^2}\lesssim \|f_\xi\|_{L^\infty}\|g\|_{L^2}$, the nonlinear
difference is Lipschitz in the energy space, namely
\[
\big|\langle \mathcal M^\ast(\mathcal N(F)-\mathcal N(G)),W\rangle\big|
\lesssim \big(\|F\|_{H^2}+\|G\|_{H^2}\big)\,E_W(\tau).
\]
Therefore,
\[
\frac{d}{d\tau}E_W(\tau)
\le C\big(1+\|F\|_{H^2}+\|G\|_{H^2}\big)\,E_W(\tau).
\]
By Step~2, $F$ and $G$ are bounded in $L^\infty([0,T];H^2(\TT))$ and $E_W(0)=0$.
Hence Gr\"onwall's inequality implies $E_W(\tau)\equiv0$ on $[0,T]$, so $W\equiv0$
and $F=G$.
\end{proof}
In the sequel, we establish global-in-time existence for the unidirectional model
\eqref{eq:Ft_final_ab_noG} under a smallness assumption on the initial datum.
More precisely, we have the following result.
\begin{theorem}[Global existence and decay for small data]
	\label{th:GWP_decay_H2}
	Let $\Upsilon>0$ and $\delta,\beta>0$. Assume that $F_0\in H^2(\TT)$ has zero
	spatial mean. There exists $\varepsilon_\ast>0$, depending only on
	$\Upsilon,\delta,\beta$, such that if
	\[
	\|F_0\|_{H^2(\TT)}\le \varepsilon_\ast,
	\]
	then the solution $F$ given by Theorem~\ref{th:LWP:uni} exists globally in time,
	\[
	F\in C\big([0,\infty);H^2(\TT)\big),
	\]
	Moreover,  there exist constants $c,C>0$,
	depending only on $\Upsilon,\delta,\beta$, such that for all $\tau\ge0$,
	\[
	E(\tau)\le C\,E(0)\,e^{-c\tau},
	\qquad
	E(\tau):=\|F\|_{L^2}^2+2\Upsilon\|F\|_{\dot H^{1/2}}^2+\Upsilon^2\|F\|_{\dot H^{1}}^2
	+\frac{\delta^2}{4}\|F\|_{\dot H^{2}}^2.
	\]
\end{theorem}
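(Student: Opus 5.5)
We will reuse the energy inequality from Step~2 of the proof of Theorem~\ref{th:LWP:uni} and close it with a smallness-based absorption. Recall that for the symmetrized formulation \eqref{eq:Mstar_step_expanded3}, testing with $F$ gave the linear dissipative contributions $I_3$, $I_6$, $I_8$. Keeping all of them rather than only $I_6$, we obtain
\[
E'(\tau)+\delta\Big(\|F\|_{\dot H^{3/2}}^2+\Upsilon\|F\|_{\dot H^{2}}^2\Big)+\frac{\delta}{2}\|F\|_{\dot H^{1}}^2+\frac{\beta\delta}{8}\|F\|_{\dot H^3}^2
\le C_0\,\|F\|_{L^\infty}\,\|F\|_{\dot H^{2}}^2 .
\]
The left side should carry the factor $\varepsilon$ from \eqref{eq:Mstar_step_expanded3}; since $\varepsilon$ is a fixed parameter, we absorb it into the constants, or rescale $\tau$. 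By Step~1 of that proof, $F$ stays mean-zero. Poincar\'e \eqref{eq:Poincare_hom_equiv} on $\TT$ then gives $\|F\|_{\dot H^s}\le\|F\|_{\dot H^3}$ for every $s\le 3$. Consequently the dissipation $D(\tau):=\frac{\beta\delta}{8}\|F\|_{\dot H^3}^2$ alone dominates $E(\tau)$: there is $c_1=c_1(\Upsilon,\delta,\beta)>0$ with $D\ge c_1E$. We also have $\|F\|_{\dot H^2}^2\le \|F\|_{\dot H^3}^2\lesssim D$.

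Next we control the cubic term. By \eqref{eq:Sobolev_1d_embed_hom}, $\|F\|_{L^\infty}\lesssim \|F\|_{\dot H^2}\lesssim E^{1/2}$, so
\[
C_0\|F\|_{L^\infty}\|F\|_{\dot H^2}^2\le C_2\,E(\tau)^{1/2}\,D(\tau).
\]
The plan is a continuity (bootstrap) argument. Choose $\varepsilon_\ast$ so that $E(0)\le C\|F_0\|_{H^2}^2\le C\varepsilon_\ast^2$ satisfies $C_2(4E(0))^{1/2}\le\frac12$. Let $\tau^\ast$ be the supremum of times $\le$ the maximal existence time on which $E\le 4E(0)$; by continuity, $\tau^\ast>0$. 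On $[0,\tau^\ast)$ the nonlinear term is absorbed, leaving
\[
E'+\tfrac12 D\le 0 \quad\Longrightarrow\quad E'+\tfrac{c_1}{2}E\le0 \quad\Longrightarrow\quad E(\tau)\le E(0)e^{-c_1\tau/2}.
\]
In particular $E(\tau)\le E(0)<4E(0)$, which contradicts maximality unless $\tau^\ast$ equals the maximal existence time. Since $E\sim\|F\|_{H^2}^2$ on mean-zero functions, the $H^2$ norm stays bounded. Because the local time $T$ in Theorem~\ref{th:LWP:uni} depends only on $\|F_0\|_{H^2}$, the local solution can be restarted repeatedly with a uniform step, which yields $F\in C([0,\infty);H^2)$ together with the exponential decay, with $c=c_1/2$ and $C=1$.

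The main obstacle is justifying the energy inequality with full dissipation on the actual solution, which only lies in $C([0,T];H^2)$, while the dissipation involves $\dot H^3$. We handle this as in Step~3 of the previous proof. We run the bootstrap on the mollified solutions $F^\nu$ of \eqref{eq:mollified_system_Jnu}, for which the identical inequality holds with $\nu$-independent constants; this gives the uniform bound $E^\nu(\tau)\le E^\nu(0)e^{-c\tau}$ on any time interval. We then pass to the limit by weak-$\ast$ lower semicontinuity, using $E^\nu(0)\le E(0)$. A secondary point is to double-check the signs of $I_3$, $I_8$ and of the $\varepsilon$ prefactor; only $I_6$ is really needed, since Poincar\'e lets $\|F\|_{\dot H^3}$ control everything.
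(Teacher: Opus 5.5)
Your core argument is the paper's: the Step~2 energy inequality, the Poincar\'e coercivity $E\lesssim\|F\|_{\dot H^3}^2$ for mean-zero $F$, absorption of the cubic term $C\,E^{1/2}\|F\|_{\dot H^3}^2$ into $\frac{\beta\delta}{8}\|F\|_{\dot H^3}^2$ under smallness, then $E'+cE\le 0$ and continuation. Your bootstrap at level $4E(0)$ is a cleaner version of the paper's ``the bound persists by continuity''. The paper applies this inequality to the $H^2$ solution formally and never justifies it, so your final paragraph goes beyond the paper. However, the specific step you propose there does not work as stated.

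The problem is that the scheme \eqref{eq:mollified_system_Jnu} does not give an ``identical'' inequality. Apply $\varepsilon\mathcal M^\ast\mathcal M$ and test with $F^\nu$. After moving one $\mathcal J_\nu$ onto $F^\nu$ by self-adjointness, every term on the right is evaluated at $G:=\mathcal J_\nu F^\nu$. So the dissipation is $\frac{\beta\delta}{8}\|\mathcal J_\nu F^\nu\|_{\dot H^3}^2$, while the energy is $E^\nu=E(F^\nu)$. Now $\|\mathcal J_\nu g\|_{\dot H^3}^2=\sum_k |k|^6e^{-2\nu k^2}|\widehat g(k)|^2$ does not dominate $\|g\|_{H^2}^2$ uniformly in $\nu$, because of the modes with $\nu k^2\gg 1$. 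Hence the coercivity $D\ge c_1E$ fails at the $\nu$-level. Equivalently, the linear decay rate of mode $k$ in the mollified system carries a factor $e^{-2\nu k^2}$. So $E^\nu(\tau)\le E^\nu(0)e^{-c\tau}$ with $c$ independent of $\nu$ does not follow.

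What does survive uniformly is the absorption, since $\|\mathcal J_\nu F^\nu\|_{L^\infty}\lesssim\|F^\nu\|_{H^2}$. This makes $E^\nu$ nonincreasing and gives $\int_0^\infty\|\mathcal J_\nu F^\nu\|_{\dot H^3}^2\,d\tau\lesssim E(0)$. That is enough for global existence, but not for decay.

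Either of two repairs closes the gap:
\begin{enumerate}
\item Regularize with a Galerkin projection $P_N$ onto $|k|\le N$ instead. It is self-adjoint, commutes with all the multipliers, and satisfies $P_NF^N=F^N$. The dissipation is then exactly $\frac{\beta\delta}{8}\|F^N\|_{\dot H^3}^2$, and your bootstrap gives $E(F^N(\tau))\le E(P_NF_0)e^{-c\tau}\le E(0)e^{-c\tau}$. This passes to the limit by weak lower semicontinuity, with uniqueness identifying the limit with $F$.
\item Use the uniform $L^2_\tau\dot H^3$ bound to get $F\in L^2_{\mathrm{loc}}(0,\infty;H^3)$. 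Then $F_\tau\in L^2_{\mathrm{loc}}(0,\infty;H^1)$ from the equation, since $a+b\mathcal H$ has order $-2$. Justify the energy inequality directly for $F$ (Lions--Magenes with pivot $H^2$) before running the decay argument.
\end{enumerate}

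Finally, on the $\varepsilon$ factor: dividing by $\varepsilon$ leaves the cubic term free of $\varepsilon$ and multiplies the dissipation by $1/\varepsilon$. So for $0<\varepsilon\le 1$ your constants are genuinely independent of $\varepsilon$, as the statement requires. Simply absorbing $\varepsilon$ into the constants would not give that.
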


\begin{proof}[Proof of Theorem~\ref{th:GWP_decay_H2}]
	Let $F$ be the local $H^2(\TT)$ solution given by Theorem~\ref{th:LWP:uni}.
	By Step~1 in the proof of Theorem~\ref{th:LWP:uni}, the spatial mean is conserved,
	so $\langle F(\tau)\rangle=0$ for all times of existence.
	
	Recall the energy from Step~2,
	\[
	E(\tau):=\|F\|_{L^2}^2+2\Upsilon\|F\|_{\dot H^{1/2}}^2+\Upsilon^2\|F\|_{\dot H^{1}}^2
	+\frac{\delta^2}{4}\|F\|_{\dot H^{2}}^2,
	\]
	and the differential inequality obtained there:
	\begin{equation}\label{eq:GWP_energy_recall}
		E'(\tau)+\frac{\beta\delta}{8}\,\|F(\tau)\|_{\dot H^3(\TT)}^2
		\le C\,\|F(\tau)\|_{L^\infty(\TT)}\,\|F(\tau)\|_{\dot H^2(\TT)}^2,
	\end{equation}
	where $C$ depends only on $\Upsilon,\delta,\beta$. \medskip
	
	Since $\langle F\rangle=0$, all Fourier modes satisfy $|k|\ge1$, hence
	$\|F\|_{\dot H^2}\le \|F\|_{\dot H^3}$. Moreover, in one dimension
	$H^2(\TT)\hookrightarrow L^\infty(\TT)$ and $E(\tau)\sim \|F(\tau)\|_{H^2}^2$, so
	\begin{equation}\label{eq:GWP_Linf_control}
		\|F(\tau)\|_{L^\infty}\ \lesssim\ \|F(\tau)\|_{H^2}\ \lesssim\ E(\tau)^{1/2}.
	\end{equation}
	Substituting these bounds into \eqref{eq:GWP_energy_recall} yields
	\begin{equation}\label{eq:GWP_absorption_form}
		E'(\tau)+\frac{\beta\delta}{8}\,\|F\|_{\dot H^3}^2
		\le C_1\,E(\tau)^{1/2}\,\|F\|_{\dot H^3}^2,
	\end{equation}
	for some $C_1=C_1(\Upsilon,\delta,\beta)$. \medskip
	
	Choose $\varepsilon_\ast>0$ so that $\|F_0\|_{H^2}\le \varepsilon_\ast$ implies
	$C_1E(0)^{1/2}\le \frac{\beta\delta}{16}$. Then, by continuity of $E(\tau)$, the
	bound $C_1E(\tau)^{1/2}\le \frac{\beta\delta}{16}$ persists for as long as the
	solution exists, and \eqref{eq:GWP_absorption_form} improves to
	\begin{equation}\label{eq:GWP_absorbed}
		E'(\tau)+\frac{\beta\delta}{16}\,\|F(\tau)\|_{\dot H^3(\TT)}^2\le 0.
	\end{equation}
	In particular, $E(\tau)$ is nonincreasing and hence $E(\tau)\le E(0)$ for all
	times of existence. Therefore $\|F(\tau)\|_{H^2}$ stays bounded uniformly, and
	the continuation criterion from the local well-posedness theorem prevents
	finite-time breakdown; consequently the solution extends for all $\tau\ge0$. \medskip
	
	Finally, since $\langle F\rangle=0$, we have the coercivity
	$E(\tau)\le C_0\|F(\tau)\|_{\dot H^3}^2$ for some $C_0=C_0(\Upsilon,\delta)$, and
	combining with \eqref{eq:GWP_absorbed} gives
	\[
	E'(\tau)+c\,E(\tau)\le 0,
	\qquad c:=\frac{\beta\delta}{16C_0}>0,
	\]
	which implies $E(\tau)\le E(0)e^{-c\tau}$.
\end{proof}
We conclude this section by establishing an analogous well-posedness result for the second unidirectional model \eqref{eq:Ft_final_abH_no_integration}. More precisely, we prove that

\begin{theorem}[Global well-posedness for small $H^3$ data]\label{th:GWP_small_H3}
Let $\Upsilon>0$ and $\delta,\beta>0$. Assume that $F_0\in H^3(\TT)$ has zero
spatial mean and consider the second unidirectional model
\eqref{eq:Ft_final_abH_no_integration}. There exists
$\varepsilon_\ast=\varepsilon_\ast(\Upsilon,\delta,\beta)>0$ such that, if
\[
\|F_0\|_{H^3(\TT)}\le \varepsilon_\ast,
\]
then the corresponding unique solution exists globally in time and
\[
F\in C([0,\infty);H^3(\TT)),\qquad F(\cdot,0)=F_0.
\]
\end{theorem}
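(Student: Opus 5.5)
We follow the scheme of Theorems~\ref{th:LWP:uni} and \ref{th:GWP_decay_H2}, but at the $H^3$ level and with the second symmetrization. Multiplying \eqref{eq:Ft_isolated_no_integration} by $\mathcal M_1^\ast$ gives
\[
\varepsilon(\mathcal M_1^\ast\mathcal M_1)F_\tau=\mathcal M_1^\ast L F-\varepsilon\,\mathcal M_1^\ast \mathcal N_2(F),
\]
where $L$ is the linear operator in brackets and $\mathcal N_2$ is the quadratic bracket. The symbol $|m_1(k)|^2=\delta^2k^6+4(1+\Upsilon|k|)^2k^2$ makes $\langle \mathcal M_1^\ast\mathcal M_1 F,F\rangle\sim \|F\|_{\dot H^1}^2+\|F\|_{\dot H^3}^2$. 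We therefore test against $F$ and set
\[
E(\tau):=\langle \mathcal M_1^\ast\mathcal M_1F,F\rangle\sim\|F\|_{H^3}^2 .
\]
Conservation of the mean is checked as in Step~1 of Theorem~\ref{th:LWP:uni}, since all linear terms are derivatives or multipliers and the nonlinearity has zero mean after the integrations by parts used there. The dissipation is identified by computing $\langle \mathcal M_1^\ast LF,F\rangle$ mode by mode. Since $\overline{m_1(k)}\,\ell(k)$ with $\ell(k)=-(1+\Upsilon|k|)k^2+\tfrac\beta4|k|k^4-i\delta|k|k^3+|k|$, the real part is $-\delta|k|k^2\big(\tfrac\beta4|k|k^4+|k|-(1+\Upsilon|k|)k^2\big)-2\delta(1+\Upsilon|k|)|k|k^4$. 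We expect this to be $\le -c\,|k|^{8}$ up to lower-order terms that are controlled for $|k|\ge1$, possibly after checking the low modes separately; if the sign at low modes fails, we add a lower-order correction to $E$. This gives
\[
E'+c\|F\|_{\dot H^{4}}^2\le C|\langle\mathcal M_1^\ast\mathcal N_2(F),F\rangle|.
\]

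The main obstacle is bounding the nonlinear term by $C\|F\|_{H^3}\|F\|_{\dot H^4}^2$. Since $\mathcal M_1^\ast$ has order three, we move one derivative onto $F$ and spend about $\tfrac32$ derivatives on each side, so it suffices to estimate $\|\mathcal N_2(F)\|_{\dot H^{1}}\|F\|_{\dot H^{4}}$ plus similar pairings. The dangerous pieces are $\tfrac\beta4\big(\comm{\Lambda}{F}\mathcal T F_{\xi\xi\xi\xi}-F_\xi\Lambda\mathcal T F_{\xi\xi\xi}\big)$, which has order four in its top factor, since $\mathcal T$ has order zero. The plan is to use the commutator gain $\|\comm{\Lambda}{F}g\|_{H^s}\lesssim\|F\|_{H^{s+1+}}\|g\|_{L^2}$-type bounds, together with the cancellation between $\comm{\Lambda}{F}\mathcal T\partial_\xi^4F$ and $F_\xi\Lambda\mathcal T\partial^3_\xi F$. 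The leading symbol of $\comm{\Lambda}{F}$ is $F_\xi\,\partial_\xi\Lambda\,\partial_\xi^{-1}$-like, namely $-F_\xi\mathcal H$, so the two terms combine into a lower-order remainder. This leaves at most $\|F\|_{W^{2,\infty}}\|F\|_{\dot H^4}\cdot\|F\|_{\dot H^4}$, i.e.\ the bound $\lesssim\|F\|_{H^3}\|F\|_{\dot H^4}^2$. The $\delta$-terms and the remaining quadratic terms are lower order and are handled with \eqref{eq:alg_inhom_1d}, \eqref{eq:T_mapping_hom}, Tricomi \eqref{eq:Tricomi_general} and \eqref{eq:GN_homogeneous_W14}. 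If the cancellation is only partial, we instead absorb the residual using the smallness of $\|F\|_{H^3}$, which is why only small data are treated.

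With the estimate $E'+c\|F\|_{\dot H^4}^2\le C_1E^{1/2}\|F\|_{\dot H^4}^2$ in hand, we choose $\varepsilon_\ast$ so that $C_1E(0)^{1/2}\le c/2$. A continuity argument then gives $E$ nonincreasing and in fact exponentially decaying, since $\|F\|_{\dot H^4}\ge\|F\|_{\dot H^3}$ for mean-zero $F$. Existence follows by Friedrichs mollification $\mathcal J_\nu$ exactly as in Step~3 of Theorem~\ref{th:LWP:uni}. The mollifiers commute with all multipliers, so the same energy inequality holds uniformly in $\nu$ and, by smallness, globally in time. Compactness (Aubin--Lions) and passage to the limit yield $F\in L^\infty(0,\infty;H^3)$. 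Continuity in $H^3$ follows from the dissipation, which gives $F\in L^2_{loc}H^4$ and $F_\tau\in L^2_{loc}H^2$. Uniqueness follows from a Gr\"onwall estimate for the difference in the weaker energy $\langle\mathcal M_1^\ast\mathcal M_1W,W\rangle$, where the loss of derivatives is again absorbed by the $\dot H^4$ dissipation of $W$.
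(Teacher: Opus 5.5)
Your proposal is correct and follows essentially the paper's route: the $\mathcal M_1^\ast$-symmetrized form tested against $F$, the energy $\langle\mathcal M_1^\ast\mathcal M_1F,F\rangle\sim\|F\|_{H^3}^2$, a bound $\lesssim\|F\|_{H^3}\|F\|_{\dot H^4}^2$ for the nonlinear pairing, and absorption under smallness; your symbol computation in fact simplifies to $-\frac{\delta\beta}{4}|k|^8-\delta(1+\Upsilon|k|)|k|^5-\delta|k|^4$, so the linear part is dissipative on every mode and no low-mode correction is needed. Two remarks: the commutator cancellation is not needed, since pairing the $\beta$-terms in $L^2$ against $\|\mathcal M_1F\|_{L^2}\lesssim\|F\|_{\dot H^3}$ and using $\|\comm{\Lambda}{F}g\|_{L^2}\lesssim\|F_\xi\|_{L^\infty}\|g\|_{L^2}$ already gives the bound (the paper's integrations by parts accomplish the same), and the principal part of $\comm{\Lambda}{F}$ is $+F_\xi\mathcal H$, not $-F_\xi\mathcal H$ (with your sign the two terms would add rather than cancel); on the other hand, your choice to absorb \emph{all} nonlinear terms into the dissipation, including the lower-order cubic ones via $\|F\|_{H^3}\lesssim\|F\|_{\dot H^4}$, is the right way to close globally and is cleaner than the paper's final comparison $\mathcal E'\le C_1\mathcal E^{3/2}$, which by itself only controls $\mathcal E$ for $\tau\lesssim\mathcal E(0)^{-1/2}$.
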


\begin{remark}\label{rem:smallness_uni2}
The smallness hypothesis in Theorem~\ref{th:GWP_small_H3} is not only used to
extend solutions globally. In fact, for the second unidirectional model
\eqref{eq:Ft_final_abH_no_integration} we do not know whether large-data local
well-posedness holds in $H^3(\TT)$. The reason is that the nonlinear terms are
highly singular (in particular those multiplied by $\beta$ and $\delta$), and in
our energy method they can be controlled only by a bootstrap/absorption argument
that relies on $\|F\|_{H^3}$ being small compared to the linear dissipation.
\end{remark}

\begin{proof}[Proof of Theorem~\ref{th:GWP_small_H3}]
The argument follows the same general strategy as the proof of
Theorem~\ref{th:LWP:uni}. In particular, one may construct solutions by a standard
Friedrichs mollification scheme, obtain a time of existence independent of the
regularization parameter via uniform a priori bounds, pass to the limit by
compactness, and conclude uniqueness by an energy estimate on the difference of
two solutions. To avoid repetition, we do not reproduce these classical approximation,
compactness, and uniqueness steps here. Instead, we focus on the only new input
for the present model, namely the a priori energy estimates. Once this estimate is
established, the existence and uniqueness statements follow verbatim from the
same scheme used in Theorem~\ref{th:LWP:uni}. \medskip

Similarly as in Theorem~\ref{th:LWP:uni} we can show that the mean zero assumption is preserved by the evolution. Indeed, assume $\langle F_0\rangle=0$ and let $F$ be a smooth solution of \eqref{eq:Ft_final_abH_no_integration}.
Taking spatial means, it suffices to note that $\widehat{\mathcal H g}(0)=0$ and that
$\alpha(\Lambda,\partial_\xi)$ carries a factor $\Lambda$ (hence also annihilates the zero mode),
so $(\alpha+\gamma\mathcal H)g$ is always mean-zero. Therefore $\partial_\tau\langle F\rangle=0$ and
$\langle F(\tau)\rangle\equiv0$. \medskip

For the a priori bounds it is convenient to work with the symmetrized formulation, rather than with \eqref{eq:Ft_final_abH_no_integration} written through $\alpha+\gamma\mathcal H$. Throughout the proof we fix $\varepsilon=1$. 
Set
\begin{equation}\label{Am1}
A:=I+\Upsilon\Lambda,
\qquad
\mathcal M_1^\ast=-2A\partial_\xi+\delta\,\Lambda\partial_\xi^2,
\qquad
\mathcal M_1^\ast\mathcal M_1=\delta^2\Lambda^2\partial_\xi^4-4A^2\partial_\xi^2.
\end{equation}

We may rewrite the second unidirectional model \eqref{eq:Ft_final_abH_no_integration}
in the symmetrized form
\begin{equation}\label{eq:uni2_symmetrized_again}
(\mathcal M_1^\ast\mathcal M_1)\,F_\tau
=
\mathcal M_1^\ast\Big(AF_{\xi\xi}
+\frac{\beta}{4}\,\Lambda F_{\xi\xi\xi\xi}
+\delta\,\Lambda F_{\xi\xi\xi}
+\Lambda F\Big)
-\mathcal M_1^\ast\,\mathcal N_1(F),
\end{equation}
where the nonlinear term $\mathcal N_1(F)$ is given by
\begin{align}\label{eq:def_N1_uni2}
\mathcal N_1(F)
&:=
\frac12\,\Lambda\Big(F_\xi^{\,2}-(\Lambda F)^2\Big)
-\comm{\Lambda}{F_\xi}\,F_\xi
+F_{\xi\xi}\,\Lambda F
+\comm{\Lambda}{F}\,\mathcal T F
-\;F_\xi\,\mathcal H(\mathcal T F)
\nonumber\\[0.2em]
&\quad
+\frac{\beta}{4}\Big(
\comm{\Lambda}{F}\,\mathcal T F_{\xi\xi\xi\xi}
-\;F_\xi\,\Lambda(\mathcal T F_{\xi\xi\xi})
\Big)
+\delta\Big(
\comm{\Lambda}{F}\,\mathcal T F_{\xi\xi\xi}
-\;F_\xi\,\Lambda(\mathcal T F_{\xi\xi})
\Big).
\end{align}

Take the $L^2(\TT)$ inner product of \eqref{eq:uni2_symmetrized_again} with $F$. Using that $A$ and $\Lambda$ are self-adjoint on $L^2(\TT)$ and commute with
$\partial_\xi$, and integrating by parts we find that
\begin{align}\label{eq:ine1}
\frac{d}{d\tau}\big(2 \|AF_\xi\|_{L^2}^2
+\frac{\delta^2}{2}\|F\|_{\dot H^3(\TT)}^2\big)=\big\langle \mathcal M_1^\ast\mathcal L_1(F),\,F\big\rangle-\big\langle \mathcal M_1^\ast\mathcal N_1(F),\,F\big\rangle
\end{align}
where
\[
\mathcal L_1(F):=AF_{\xi\xi}
+\frac{\beta}{4}\,\Lambda F_{\xi\xi\xi\xi}
+\delta\,\Lambda F_{\xi\xi\xi}
+\Lambda F .
\]
Computing the linear terms of the right hand side of \eqref{eq:ine1} we readily check that
\begin{align}
-\big\langle \mathcal M_1^\ast\mathcal L_1(F),\,F\big\rangle=\sum_{i=1}^{8}J_i,
\end{align}
with
\begin{align*}
J_1&:=2\int_{\TT}A(AF_{\xi\xi})\,F_\xi\,d\xi,
&
J_2&:=\delta\int_{\TT}\Lambda(AF_{\xi\xi})\,F_{\xi\xi}\,d\xi,\\[0.3em]
J_3&:=\frac{\beta}{2}\int_{\TT}A\!\Big(\Lambda F_{\xi\xi\xi\xi}\Big)\,F_\xi\,d\xi,
&
J_4&:=\frac{\delta \beta}{4}\int_{\TT}\Lambda\!\Big(\Lambda F_{\xi\xi\xi\xi}\Big)\,F_{\xi\xi}\,d\xi,\\[0.3em]
J_5&:=2\delta\int_{\TT}A(\Lambda F_{\xi\xi\xi})\,F_\xi\,d\xi,
&
J_6&:=\delta^{2}\int_{\TT}\Lambda(\Lambda F_{\xi\xi\xi})\,F_{\xi\xi}\,d\xi,\\[0.3em]
J_7&:=2\int_{\TT}A(\Lambda F)\,F_\xi\,d\xi,
&
J_8&:=\delta\int_{\TT}\Lambda(\Lambda F)\,F_{\xi\xi}\,d\xi.
\end{align*}
Therefore, using once again that $A$ and $\Lambda$ are self-adjoint on $L^2(\TT)$ obtain that
\[ J_{1}=J_{3}=J_{6}=J_{7}=0.\]
Moreover, we also find that 
\[ J_{2}+J_{5}=-\delta \norm{A^{1/2}F}_{\dot{H}^{5/2}}^{2}, \quad J_{4}=-\frac{\delta \beta}{4}\norm{F}_{\dot{H}^{4}}^{2},\quad  J_{8}=-\delta \norm{F}_{\dot{H}^{2}}^{2}.\]
Thus, noticing that 
\begin{equation}\label{def:A}
\|A F_\xi\|_{L^2}^2
=
\|F\|_{\dot H^1}^2
+2\Upsilon\|F\|_{\dot H^{3/2}}^2
+\Upsilon^2\|F\|_{\dot H^{2}}^2
\end{equation}
and denoting by \[ \mathcal{E}(\tau)=\|F\|_{\dot H^1}^2
+2\Upsilon\|F\|_{\dot H^{3/2}}^2
+\Upsilon^2\|F\|_{\dot H^{2}}^2
+\frac{\delta^2}{4}\|F\|_{\dot H^3(\TT)}^2\] 
we find the energy estimate
\begin{align}\label{energy:estiamte1}
\frac{d}{d\tau}\mathcal{E}(\tau)+\frac{\delta}{2}\norm{A^{1/2}F}_{\dot{H}^{5/2}}^{2}+\frac{\delta \beta}{8}\norm{F}_{\dot{H}^{4}}^{2}+\frac{\delta}{2} \norm{F}_{\dot{H}^{2}}^{2}=-\frac{1}{2}\big\langle \mathcal M_1^\ast\mathcal N_1(F),\,F\big\rangle
\end{align}
Next, we need to estimate the non-linear terms. More precisely, we have to control 
\[
K_j
=
2\int_{\TT}A\,\partial_\xi\big(\mathcal N_{1,j}(F)\big)\,F\,d\xi
-\delta\int_{\TT}\Lambda\,\partial_\xi^2\big(\mathcal N_{1,j}(F)\big)\,F\,d\xi .
\]
where
\begin{align*}
K_1
&=
2\int_{\TT}A\,\partial_\xi\!\left[\frac12\,\Lambda\!\Big(F_\xi^{\,2}-(\Lambda F)^2\Big)\right]\,F\,d\xi
-\delta\int_{\TT}\Lambda\,\partial_\xi^2\!\left[\frac12\,\Lambda\!\Big(F_\xi^{\,2}-(\Lambda F)^2\Big)\right]\,F\,d\xi,\\[0.4em]
K_2
&=
2\int_{\TT}A\,\partial_\xi\!\left[-\,\comm{\Lambda}{F_\xi}\,F_\xi\right]\,F\,d\xi
-\delta\int_{\TT}\Lambda\,\partial_\xi^2\!\left[-\,\comm{\Lambda}{F_\xi}\,F_\xi\right]\,F\,d\xi,\\[0.4em]
K_3
&=
2\int_{\TT}A\,\partial_\xi\!\left[F_{\xi\xi}\,\Lambda F\right]\,F\,d\xi
-\delta\int_{\TT}\Lambda\,\partial_\xi^2\!\left[F_{\xi\xi}\,\Lambda F\right]\,F\,d\xi,\\[0.4em]
K_4
&=
2\int_{\TT}A\,\partial_\xi\!\left[\comm{\Lambda}{F}\,\mathcal T F\right]\,F\,d\xi
-\delta\int_{\TT}\Lambda\,\partial_\xi^2\!\left[\comm{\Lambda}{F}\,\mathcal T F\right]\,F\,d\xi,\\[0.4em]
K_5
&=
2\int_{\TT}A\,\partial_\xi\!\left[-\,F_\xi\,\mathcal H(\mathcal T F)\right]\,F\,d\xi
-\delta\int_{\TT}\Lambda\,\partial_\xi^2\!\left[-\,F_\xi\,\mathcal H(\mathcal T F)\right]\,F\,d\xi,\\[0.4em]
K_6
&=
2\int_{\TT}A\,\partial_\xi\!\left[\frac{\beta}{4}\Big(\comm{\Lambda}{F}\,\mathcal T F_{\xi\xi\xi\xi}
-\;F_\xi\,\Lambda(\mathcal T F_{\xi\xi\xi})\Big)\right]\,F\,d\xi\\
&\qquad
-\delta\int_{\TT}\Lambda\,\partial_\xi^2\!\left[\frac{\beta}{4}\Big(\comm{\Lambda}{F}\,\mathcal T F_{\xi\xi\xi\xi}
-\;F_\xi\,\Lambda(\mathcal T F_{\xi\xi\xi})\Big)\right]\,F\,d\xi,\\[0.4em]
K_7
&=
2\int_{\TT}A\,\partial_\xi\!\left[\delta\Big(\comm{\Lambda}{F}\,\mathcal T F_{\xi\xi\xi}
-\;F_\xi\,\Lambda(\mathcal T F_{\xi\xi})\Big)\right]\,F\,d\xi\\
&\qquad
-\delta\int_{\TT}\Lambda\,\partial_\xi^2\!\left[\delta\Big(\comm{\Lambda}{F}\,\mathcal T F_{\xi\xi\xi}
-\;F_\xi\,\Lambda(\mathcal T F_{\xi\xi})\Big)\right]\,F\,d\xi.
\end{align*}

Using the algebra property \eqref{eq:alg_inhom_1d}, estimate \eqref{eq:T_mapping_hom} and Sobolev embedding \eqref{eq:Sobolev_1d_embed_hom} we find that 
\begin{align}\label{eq:K1_to_K5_bound}
|K_1|
&\lesssim
\big(\|F_\xi\|_{\dot H^1}^{2}+\|\Lambda F\|_{\dot H^1}^{2}\big)\,
\Big(\|AF_\xi\|_{L^{2}}+\|F\|_{\dot H^{3}}\Big)\lesssim \mathcal{E}^{\frac{3}{2}}(\tau),
\nonumber\\[0.25em]
|K_2|
&\lesssim
\big(\|F_\xi\|_{\dot H^1}^{2}+\|F_\xi\|_{L^{\infty}}\|\Lambda F_\xi\|_{L^{2}}\big)\,
\Big(\|AF_\xi\|_{L^{2}}+\|F\|_{\dot H^{3}}\Big)\lesssim \mathcal{E}^{\frac{3}{2}}(\tau),
\nonumber\\[0.25em]
|K_3|
&\lesssim
\big(\|F\|_{\dot H^2}\|\Lambda F\|_{L^{\infty}}\big)\,
\Big(\|AF_\xi\|_{L^{2}}+\|F\|_{\dot H^{3}}\Big)\lesssim \mathcal{E}^{\frac{3}{2}}(\tau),
\nonumber\\[0.25em]
|K_4|
&\lesssim
\big(\|F\|_{\dot H^1}\|\mathcal{T} F\|_{\dot H^1}+\|F\|_{L^{\infty}}\|\Lambda \mathcal{T} F\|_{L^{2}}\big)\,
\Big(\|AF_\xi\|_{L^{2}}+\|F\|_{\dot H^{3}}\Big)\lesssim \mathcal{E}^{\frac{3}{2}}(\tau),
\nonumber\\[0.25em]
|K_5|
&\lesssim
\big(\|F_{\xi}\|_{L^{\infty}}\|\mathcal{H}\mathcal{T} F\|_{L^{2}}\big)\,
\Big(\|AF_\xi\|_{L^{2}}+\|F\|_{\dot H^{3}}\Big)\lesssim \mathcal{E}^{\frac{3}{2}}(\tau).
\end{align}
To conclude the a priori estimates, we are left with $K_{6}, K_{7}$ which are the more singular terms. Expanding the commutator and integrating by parts, we readily check that 
\begin{align*}
K_6
&=
2\int_{\TT}\!\Bigg[\frac{\beta}{4}\Big(
\Lambda\!\big(F\,\mathcal T F_{\xi\xi\xi\xi}\big)
-F\,\Lambda\!\big(\mathcal T F_{\xi\xi\xi\xi}\big)
-F_\xi\,\Lambda\!\big(\mathcal T F_{\xi\xi\xi}\big)
\Big)\Bigg]\, A\,\partial_\xi F\,d\xi
\notag\\
&\qquad
-\delta\int_{\TT}\!\Bigg[\frac{\beta}{4}\Big(
\Lambda\!\big(F\,\mathcal T F_{\xi\xi\xi\xi}\big)
-F\,\Lambda\!\big(\mathcal T F_{\xi\xi\xi\xi}\big)
-F_\xi\,\Lambda\!\big(\mathcal T F_{\xi\xi\xi}\big)
\Big)\Bigg]\,\Lambda\,\partial_\xi^2 F\,d\xi .
\end{align*}
Rather than estimating these integrals directly, we first integrate by parts once more in the first two contributions. This reveals the appropriate derivative distribution needed to close the estimate. Indeed, we can write $K_{6}=K_{6,A}+K_{6,\delta}$ where
\begin{align*}
K_{6,A}=
&= \frac{\beta}{2}\int_{\TT}F\,\mathcal T F_{\xi\xi\xi\xi}\,\Lambda A\,\partial_\xi F\,d\xi
   -\frac{\beta}{2}\int_{\TT}\mathcal T F_{\xi\xi\xi\xi}\,\Lambda\!\big(F\,A\partial_\xi F\big)\,d\xi
   -\frac{\beta}{2}\int_{\TT}F_\xi\,\Lambda(\mathcal T F_{\xi\xi\xi})\,A\partial_\xi F\,d\xi
\end{align*}
and 
\begin{align*}
K_{6,\delta}
&= \frac{\beta\delta}{4}\int_{\TT}F\,\mathcal T F_{\xi\xi\xi\xi}\,\Lambda^4 F\,d\xi
   +\frac{\beta\delta}{4}\int_{\TT}\mathcal T F_{\xi\xi\xi\xi}\,\Lambda\!\big(F\,\Lambda\partial_\xi^2 F\big)\,d\xi
   +\frac{\beta\delta}{4}\int_{\TT}F_\xi\,\Lambda(\mathcal T F_{\xi\xi\xi})\,\Lambda\partial_\xi^2 F\,d\xi.
\end{align*}
Therefore, using Cauchy-Schwarz, the algebra property \eqref{eq:alg_inhom_1d} and estimate \eqref{eq:T_mapping_hom} we conclude that
\begin{align*}
|K_{6,A}|
&\lesssim \norm{F}_{L^{\infty}}\norm{\mathcal{T}F}_{\dot H^4}\norm{A F_{\xi}}_{L^2}+\norm{TF}_{\dot H^4}\norm{F}_{\dot H^1}\norm{AF_{\xi}}_{\dot H^1}+\norm{F_{\xi}}_{L^{\infty}}\norm{\mathcal{T}F}_{\dot H^4}\norm{AF_{\xi}}_{L^2}
\end{align*}
and
\begin{align*}
|K_{6,\delta}|
&\lesssim \norm{F}_{L^{\infty}}\norm{\mathcal{T}F}_{\dot H^4}^{2}+\norm{TF}_{\dot H^4}^{2}\norm{F}_{\dot H^1}+\norm{F_{\xi}}_{L^{\infty}}\norm{\mathcal{T}F}_{\dot H^4}\norm{F}_{\dot H^3}.
\end{align*}
Combining both estimates, recalling \eqref{def:A} and using the Sobolev embedding \eqref{eq:Sobolev_1d_embed_hom} yields
\begin{align}\label{estimateK6}
|K_{6}|\leq |K_{6,A}|+|K_{6,\delta}| \lesssim \sqrt{\mathcal{E}(\tau)} \norm{F}_{\dot{H}^{4}}^{2}
\end{align}
The same estimate holds for the term $K_{7}$. More precisely, by the same argument we have that
\begin{align}\label{estimateK7}
|K_{7}| \lesssim \sqrt{\mathcal{E}(\tau)} \norm{F}_{\dot{H}^{4}}^{2}
\end{align}
Therefore, collecting estimates \eqref{energy:estiamte1}, \eqref{eq:K1_to_K5_bound}, \eqref{estimateK6} and \eqref{estimateK7} we have shown that
\begin{align}\label{eq:E_diff_ineq_small}
\frac{d}{d\tau}\mathcal{E}(\tau)+\frac{\delta}{2}\norm{A^{1/2}F}_{\dot{H}^{5/2}}^{2}+\frac{\delta \beta}{8}\norm{F}_{\dot{H}^{4}}^{2}+\frac{\delta}{2} \norm{F}_{\dot{H}^{2}}^{2}\leq C_{1} \mathcal{E}(\tau)^\frac32 + C_{2}\sqrt{\mathcal{E}(\tau)} \norm{F}_{\dot{H}^{4}}^{2}
\end{align}
Define the bootstrap time
\[
T^\ast:=\sup\Big\{T>0:\ \sup_{0\le\tau\le T}\mathcal{E}(\tau)\le 4\mathcal{E}(0)\Big\}.
\]
On $[0,T^\ast]$ we have $\sqrt{\mathcal E(\tau)}\le 2\sqrt{\mathcal E(0)}$, hence
\[
C_2\sqrt{\mathcal E(\tau)}\,\|F\|_{\dot H^4}^2
\le 2C_2\sqrt{\mathcal E(0)}\,\|F\|_{\dot H^4}^2.
\]
Assume the smallness condition
\begin{equation}\label{eq:smallness_condition}
\sqrt{\mathcal E(0)}\le \varepsilon_\ast:=\frac{\delta\beta}{32C_2}.
\end{equation}
Then on $[0,T^\ast]$ the last term in \eqref{eq:E_diff_ineq_small} is absorbed by
$\frac{\delta\beta}{8}\|F\|_{\dot H^4}^2$, and we obtain
\begin{align}
\label{eq:E_absorbed}
\mathcal E'(\tau)
+\frac{\delta}{2}\|A^{1/2}F\|_{\dot H^{5/2}}^{2}
+\frac{\delta \beta}{16}\|F\|_{\dot H^{4}}^{2}
+\frac{\delta}{2}\|F\|_{\dot H^{2}}^{2}
\le
C_1\mathcal E(\tau)^{3/2},
\qquad \tau\in[0,T^\ast].
\end{align}
Dropping the nonnegative dissipation terms yields $\mathcal E'(\tau)\le C_1\mathcal E(\tau)^{3/2}$,
so that
\[
\mathcal E(\tau)^{-1/2}\ge \mathcal E(0)^{-1/2}-\frac{C_1}{2}\tau,
\qquad \tau\in[0,T^\ast].
\]
In particular, $\mathcal E(\tau)\le 4\mathcal E(0)$ on $[0,T^\ast]$, which improves the bootstrap
assumption and forces $T^\ast=\infty$. Hence $\sup_{\tau\ge 0}\mathcal E(\tau)\le 4\mathcal E(0)$
and the corresponding $H^3$ solution extends globally in time. Uniqueness follows by repeating the same energy estimate on the difference of two solutions as in the proof of Theorem~\ref{th:LWP:uni}
\end{proof}

	\appendix

	\section{Geometric formulas for graph surfaces}
	\label{appendix:geometry}
	
	For convenience we collect here the basic geometric identities associated
	with the free surface
	\[
	\Gamma(t)=\{(x_1,x_2,\eta(x,t)):\,x=(x_1,x_2)\in L\mathbb{S}^{1}\times L\mathbb{S}^{1}\},
	\]
	expressing all quantities directly in terms of the height function
	$\eta(x,t)$, where $x=(x_1,x_2)$ denotes the horizontal variables.  \medskip
	
	The surface is parametrised by
	\[
	X(x,t)=(x_1,x_2,\eta(x,t)),
	\]
	so that differentiation with respect to $x$ yields the tangent vectors
	\[
	\partial_{x_i}X=e_i+(\partial_{x_i}\eta)\,e_3,\qquad i=1,2.
	\]
	The first fundamental form is then
	\[
	g_{ij}
	=\partial_{x_i}X\cdot\partial_{x_j}X
	=\delta_{ij}+\partial_{x_i}\eta\,\partial_{x_j}\eta,
	\]
	with determinant
	\[
	|g|=1+|\nabla_x\eta|^{2},\qquad \alpha:=1+|\nabla_x\eta|^{2},
	\]
	and inverse metric $g^{ij}=\alpha^{ij}=(g_{ij})^{-1}$.  The corresponding
	upward-pointing unit normal takes the familiar form
	\[
	n(x,t)
	=\frac{1}{\sqrt{\alpha}}
	\begin{pmatrix}
	-\eta_{x_1}\\
	-\eta_{x_2}\\
	1
	\end{pmatrix},
	\]
	and the induced surface area element is $\sqrt{\alpha}\,dx$. \medskip
	
	Differentiating the parametrisation twice,
	$\partial_{x_i x_j}X=(0,0,\partial_{x_i x_j}\eta)$, and projecting onto the
	normal yields the second fundamental form,
	\[
	b_{ij}
	=\partial_{x_i x_j}X\cdot n
	=\frac{\eta_{x_i x_j}}{\sqrt{\alpha}}.
	\]
	In terms of $g^{ij}$ and $b_{ij}$, the mean curvature of the graph is
	\[
	H(\eta)
	=\frac12\,g^{ij}b_{ij}
	=\frac{1}{2\sqrt{\alpha}}
	\Big(
	\alpha^{11}\eta_{x_1x_1}
	+2\alpha^{12}\eta_{x_1x_2}
	+\alpha^{22}\eta_{x_2x_2}
	\Big),
	\]
	while the Gauss curvature is given by the classical identity
	\[
	K(\eta)
	=\frac{\det(b_{ij})}{\det(g_{ij})}
	=\frac{
		\eta_{x_1x_1}\eta_{x_2x_2}
		-(\eta_{x_1x_2})^{2}}
	{(1+|\nabla_x\eta|^{2})^{2}}.
	\]
	
	Finally, for any scalar function $f=f(x,t)$ defined on the surface, the
	Laplace--Beltrami operator reduces to the divergence-form expression
	\[
	\Delta_{\Gamma} f
	=\frac{1}{\sqrt{\alpha}}\,
	\partial_{x_i}\!\left(
	\sqrt{\alpha}\,\alpha^{ij}\,\partial_{x_j}f
	\right),
	\]
	where all derivatives are horizontal. These formulas permit the elastic
	operator $\mathcal{E}(\eta)$ to be expressed entirely in terms of $\eta$
	and its derivatives on the reference torus.

	\section{Nondimensionalisation of the Fluid--Structure System}\label{appendixB:non-dimensional}
	
	In this appendix we provide a complete nondimensionalisation of the system
	governing the coupled evolution of the fluid potential $\phi$ and the plate
	displacement $\eta$. 
	
	\subsection{Choice of characteristic scales}
	
	Let $L$ denote the typical horizontal wavelength and $H$ the typical
	vertical amplitude of the elastic sheet. We write $x=(x_1,x_2)$ for the
	horizontal variables and $x_3$ for the vertical coordinate. We introduce
	the dimensionless spatial and temporal variables
	\[
	x = L\,\tilde x, \qquad x_3 = L\,\tilde x_3, \qquad
	t = \sqrt{\frac{L}{g}}\,\tilde t,
	\]
	and the nondimensional surface displacement, potential, and surface potential
	\[
	\eta(x,t)=H\,\tilde\eta(\tilde x,\tilde t),\qquad
	\phi(x,x_3,t)=H\sqrt{gL}\,\tilde\phi(\tilde x,\tilde x_3,\tilde t),\qquad
	\psi(x,t)=H\sqrt{gL}\,\tilde\psi(\tilde x,\tilde t).
	\]
	We denote the dimensionless steepness parameter by
	\[
	\varepsilon := \frac{H}{L}.
	\]
	
	In what follows all derivatives with respect to $\tilde x$ are written with
	tildes. Transformations of derivatives follow from
	\[
	\partial_t = \sqrt{\frac{g}{L}}\,\partial_{\tilde t},\qquad
	\nabla_x = \frac1L\,\tilde\nabla_x,\qquad
	\Delta_x = \frac1{L^2}\,\tilde\Delta_x,
	\]
	and therefore
	\[
	\partial_t\eta = H\sqrt{\frac{g}{L}}\,\partial_{\tilde t}\tilde\eta,\qquad
	\partial_t^2\eta = H\,\frac{g}{L}\,\partial_{\tilde t}^2\tilde\eta.
	\]
	
	\subsection{ Scaling of the bending operator and dynamic boundary conditions}
	
	The elastic operator in the model is
	\[
	\mathcal{E}(\eta)
	= \frac{B}{2}\Delta_\Gamma H(\eta)
	+ B\,H(\eta)^3
	- B\,H(\eta)\,K(\eta),
	\]
	where $B>0$ denotes the bending stiffness.\footnote{Dimensionally,
		$B$ has units of energy, $[B]=\mathrm{ML^2T^{-2}}$, so that the integrand
		in the bending energy
		\(
		E_b[\eta]=\int \tfrac{B}{2}H(\eta)^2
		\sqrt{1+|\nabla_x\eta|^2}\,dx
		\)
		has units of energy per unit area.}
	
	We now express each geometric quantity more explicitly in terms of the
	dimensionless variables. Recall that
	\[
	\eta(x,t)=H\,\tilde\eta(\tilde x,\tilde t),\qquad
	x_i=L\,\tilde x_i,\quad i=1,2,
	\]
	so that
	\[
	\partial_{x_i}\eta=\varepsilon\,\partial_{\tilde x_i}\tilde\eta,\qquad
	\partial_{x_ix_j}^2\eta=\frac{H}{L^2}\,\partial_{\tilde x_i\tilde x_j}^2\tilde\eta,
	\qquad
	\varepsilon=\frac{H}{L}.
	\]
	The induced metric and its determinant are
	\[
	g_{ij}
	= \delta_{ij}+\partial_{x_i}\eta\,\partial_{x_j}\eta
	= \delta_{ij}
	+ \varepsilon^2\partial_{\tilde x_i}\tilde\eta\,\partial_{\tilde x_j}\tilde\eta,
	\qquad
	|g| = 1+|\nabla_x\eta|^2
	= 1+\varepsilon^2|\tilde\nabla_x\tilde\eta|^2
	=:\tilde\alpha,
	\]
	and the inverse metric entries are
	\[
	\alpha^{11}
	= \frac{1+\varepsilon^2(\partial_{\tilde x_2}\tilde\eta)^2}{\tilde\alpha},
	\qquad
	\alpha^{22}
	= \frac{1+\varepsilon^2(\partial_{\tilde x_1}\tilde\eta)^2}{\tilde\alpha},
	\qquad
	\alpha^{12}
	= -\,\frac{\varepsilon^2\,\partial_{\tilde x_1}\tilde\eta\,\partial_{\tilde x_2}\tilde\eta}{\tilde\alpha}.
	\]
	
	\paragraph{Mean curvature in terms of $\tilde\eta$.}
	Using the formula (see Appendix~\ref{appendix:geometry})
	\[
	H(\eta)
	= \frac{1}{2\sqrt{1+|\nabla_x\eta|^2}}
	\left(
	\alpha^{11}\partial_{x_1x_1}^2\eta
	+ 2\alpha^{12}\partial_{x_1x_2}^2\eta
	+ \alpha^{22}\partial_{x_2x_2}^2\eta
	\right),
	\]
	and substituting $\partial_{x_ix_j}^2\eta = \dfrac{H}{L^2}\,\partial_{\tilde x_i\tilde x_j}^2\tilde\eta$
	and $1+|\nabla_x\eta|^2=\tilde\alpha$, we obtain
	\[
	H(\eta)
	= \frac{H}{L^2}\,
	\frac{1}{2\sqrt{\tilde\alpha}}
	\left(
	\alpha^{11}\partial_{\tilde x_1\tilde x_1}^2\tilde\eta
	+ 2\alpha^{12}\partial_{\tilde x_1\tilde x_2}^2\tilde\eta
	+ \alpha^{22}\partial_{\tilde x_2\tilde x_2}^2\tilde\eta
	\right).
	\]
	It is convenient to introduce the dimensionless functional
	\[
	\mathcal{H}(\tilde\eta;\varepsilon)
	:= \frac{1}{2\sqrt{\tilde\alpha}}
	\left(
	\alpha^{11}\partial_{\tilde x_1\tilde x_1}^2\tilde\eta
	+ 2\alpha^{12}\partial_{\tilde x_1\tilde x_2}^2\tilde\eta
	+ \alpha^{22}\partial_{\tilde x_2\tilde x_2}^2\tilde\eta
	\right),
	\]
	so that
	\[
	H(\eta) = \frac{H}{L^2}\,\mathcal{H}(\tilde\eta;\varepsilon).
	\]
	
	\paragraph{Gaussian curvature in terms of $\tilde\eta$.}
	From
	\[
	K(\eta)
	= \frac{
		\partial_{x_1x_1}^2\eta\,\partial_{x_2x_2}^2\eta
		-(\partial_{x_1x_2}^2\eta)^2}
	{(1+|\nabla_x\eta|^2)^2},
	\]
	we obtain
	\[
	\partial_{x_1x_1}^2\eta\,\partial_{x_2x_2}^2\eta
	-(\partial_{x_1x_2}^2\eta)^2
	= \frac{H^2}{L^4}
	\big(
	\partial_{\tilde x_1\tilde x_1}^2\tilde\eta\,
	\partial_{\tilde x_2\tilde x_2}^2\tilde\eta
	- (\partial_{\tilde x_1\tilde x_2}^2\tilde\eta)^2
	\big),
	\qquad
	(1+|\nabla_x\eta|^2)^2 = \tilde\alpha^2.
	\]
	Thus
	\[
	K(\eta)
	= \frac{H^2}{L^4}\,
	\frac{
		\partial_{\tilde x_1\tilde x_1}^2\tilde\eta\,
		\partial_{\tilde x_2\tilde x_2}^2\tilde\eta
		- (\partial_{\tilde x_1\tilde x_2}^2\tilde\eta)^2}
	{\tilde\alpha^2}
	=: \frac{H^2}{L^4}\,\mathcal{K}(\tilde\eta;\varepsilon),
	\]
	where
	\[
	\mathcal{K}(\tilde\eta;\varepsilon)
	= \frac{
		\partial_{\tilde x_1\tilde x_1}^2\tilde\eta\,
		\partial_{\tilde x_2\tilde x_2}^2\tilde\eta
		- (\partial_{\tilde x_1\tilde x_2}^2\tilde\eta)^2}
	{\tilde\alpha^2}.
	\]
	
	\paragraph{Laplace--Beltrami of $H(\eta)$.}
	Recall
	\[
	\Delta_\Gamma f
	= \frac{1}{\sqrt{1+|\nabla_x\eta|^2}}\,
	\partial_{x_i}
	\big(\sqrt{1+|\nabla_x\eta|^2}\,\alpha^{ij}\partial_{x_j}f\big),
	\]
	where $\partial_{x_i}$ acts on the horizontal variables $x=(x_1,x_2)$.
	For $f=H(\eta)$, with
	\(
	H(\eta) = \dfrac{H}{L^2}\,\mathcal{H}(\tilde\eta;\varepsilon),
	\)
	we have
	\[
	\partial_{x_j}H
	= \frac{H}{L^2}\,\partial_{x_j}\mathcal{H}
	= \frac{H}{L^3}\,\partial_{\tilde x_j}\mathcal{H}(\tilde\eta;\varepsilon).
	\]
	Hence
	\[
	\sqrt{1+|\nabla_x\eta|^2}\,\alpha^{ij}\partial_{x_j}H
	= \sqrt{\tilde\alpha}\,\alpha^{ij}
	\frac{H}{L^3}\,\partial_{\tilde x_j}\mathcal{H},
	\]
	and
	\[
	\partial_{x_i}\big(\sqrt{1+|\nabla_x\eta|^2}\,\alpha^{ij}\partial_{x_j}H\big)
	= \frac{1}{L}\,\partial_{\tilde x_i}\left(
	\sqrt{\tilde\alpha}\,\alpha^{ij}
	\frac{H}{L^3}\,\partial_{\tilde x_j}\mathcal{H}
	\right)
	= \frac{H}{L^4}\,
	\partial_{\tilde x_i}\left(
	\sqrt{\tilde\alpha}\,\alpha^{ij}
	\partial_{\tilde x_j}\mathcal{H}
	\right).
	\]
	Dividing by $\sqrt{1+|\nabla_x\eta|^2}=\sqrt{\tilde\alpha}$ we arrive at
	\[
	\Delta_\Gamma H(\eta)
	= \frac{H}{L^4}\,
	\mathcal{L}_\Gamma(\tilde\eta;\varepsilon),
	\]
	where
	\[
	\mathcal{L}_\Gamma(\tilde\eta;\varepsilon)
	:= \frac{1}{\sqrt{\tilde\alpha}}\,
	\partial_{\tilde x_i}\left(
	\sqrt{\tilde\alpha}\,\alpha^{ij}
	\partial_{\tilde x_j}\mathcal{H}(\tilde\eta;\varepsilon)
	\right).
	\]
	
	\paragraph{Putting everything together.}
	With the above notation,
	\[
	H(\eta) = \frac{H}{L^2}\,\mathcal{H}(\tilde\eta;\varepsilon),
	\qquad
	K(\eta) = \frac{H^2}{L^4}\,\mathcal{K}(\tilde\eta;\varepsilon),
	\qquad
	\Delta_\Gamma H(\eta)
	= \frac{H}{L^4}\,\mathcal{L}_\Gamma(\tilde\eta;\varepsilon).
	\]
	Therefore,
	\[
	\frac{B}{2}\Delta_\Gamma H(\eta)
	= \frac{BH}{2L^4}\,\mathcal{L}_\Gamma(\tilde\eta;\varepsilon),
	\]
	\[
	B H(\eta)^3
	= B\left(\frac{H}{L^2}\right)^3\mathcal{H}(\tilde\eta;\varepsilon)^3
	= \frac{BH}{L^4}\,\varepsilon^2\,\mathcal{H}(\tilde\eta;\varepsilon)^3,
	\]
	\[
	-BH(\eta)K(\eta)
	= -B\left(\frac{H}{L^2}\mathcal{H}\right)
	\left(\frac{H^2}{L^4}\mathcal{K}\right)
	= -\frac{BH}{L^4}\,\varepsilon^2\,
	\mathcal{H}(\tilde\eta;\varepsilon)\mathcal{K}(\tilde\eta;\varepsilon).
	\]
	
	Factoring out the common prefactor $\dfrac{BH}{L^4}$ we obtain
	\[
	\mathcal{E}(\eta)
	= \frac{BH}{L^4}\,
	\mathcal{E}(\tilde\eta;\varepsilon)
	\]
	where the dimensionless operator $\mathcal{E}$ is given
	explicitly by
	\[
	\mathcal{E}(\tilde\eta;\varepsilon)
	= \frac12\,\mathcal{L}_\Gamma(\tilde\eta;\varepsilon)
	+ \varepsilon^2\Big(
	\mathcal{H}(\tilde\eta;\varepsilon)^3
	- \mathcal{H}(\tilde\eta;\varepsilon)\,
	\mathcal{K}(\tilde\eta;\varepsilon)
	\Big),
	\]
	with
	\begin{align*}
	\mathcal{H}(\tilde\eta;\varepsilon)
	&= \frac{1}{2\sqrt{\tilde\alpha}}
	\big(
	\alpha^{11}\partial_{\tilde x_1\tilde x_1}^2\tilde\eta
	+ 2\alpha^{12}\partial_{\tilde x_1\tilde x_2}^2\tilde\eta
	+ \alpha^{22}\partial_{\tilde x_2\tilde x_2}^2\tilde\eta
	\big),\\[0.4ex]
	\mathcal{K}(\tilde\eta;\varepsilon)
	&= \frac{
		\partial_{\tilde x_1\tilde x_1}^2\tilde\eta\,
		\partial_{\tilde x_2\tilde x_2}^2\tilde\eta
		- (\partial_{\tilde x_1\tilde x_2}^2\tilde\eta)^2}
	{\tilde\alpha^2},\\[0.4ex]
	\mathcal{L}_\Gamma(\tilde\eta;\varepsilon)
	&= \frac{1}{\sqrt{\tilde\alpha}}\,
	\partial_{\tilde x_i}\left(
	\sqrt{\tilde\alpha}\,\alpha^{ij}
	\partial_{\tilde x_j}\mathcal{H}(\tilde\eta;\varepsilon)
	\right),
	\end{align*}
	and
	\[
	\tilde\alpha
	= 1+\varepsilon^2|\tilde\nabla_x\tilde\eta|^2,\qquad
	\alpha^{ij}=\alpha^{ij}(\varepsilon,\tilde\nabla_x\tilde\eta).
	\]
	In particular, $\mathcal{E}$ depends only on $\tilde\eta$ and
	its first and second derivatives; no additional geometric unknowns are introduced.\footnote{In particular, the nondimensional elastic operator 
		$\mathcal{E}(\eta;\varepsilon)$ depends only on $\eta$ and on its first 
		and second spatial derivatives; no additional geometric variables or unknowns 
		are introduced.}
	
	\begin{remark}
		In many asymptotic regimes one assumes small steepness, $\varepsilon \ll 1$.
		Expanding
		\[
		\tilde\alpha = 1+\varepsilon^2|\tilde\nabla_x\tilde\eta|^2
		= 1+O(\varepsilon^2),\qquad
		\alpha^{ij} = \delta_{ij} + O(\varepsilon^2),
		\]
		we obtain, to leading order,
		\[
		\mathcal{H}(\tilde\eta;\varepsilon)
		= \frac12\Delta_{\tilde x}\tilde\eta
		+ O\big(\varepsilon^2|\tilde\nabla_x\tilde\eta|\,|\tilde\nabla_x^2\tilde\eta|\big),
		\]
		and therefore
		\[
		\mathcal{L}_\Gamma(\tilde\eta;\varepsilon)
		= \Delta_{\tilde x}\mathcal{H}(\tilde\eta;\varepsilon)
		+ O(\varepsilon^2)
		= \frac12\Delta_{\tilde x}^2\tilde\eta
		+ O\big(\varepsilon^2 \mathcal{P}(\tilde\eta)\big),
		\]
		where $\mathcal{P}(\tilde\eta)$ denotes a polynomial expression in
		$\tilde\nabla_x\tilde\eta$, $\tilde\nabla_x^2\tilde\eta$, $\tilde\nabla_x^3\tilde\eta$,
		and $\tilde\nabla_x^4\tilde\eta$. Consequently
		\[
		\mathcal{E}(\tilde\eta;\varepsilon)
		= \frac14\Delta_{\tilde x}^2\tilde\eta
		+ O\big(\varepsilon^2 \mathcal{P}(\tilde\eta)\big),
		\]
		so that the principal part of the geometric bending operator is indeed
		biharmonic. This justifies the appearance of operators of the type
		$\Delta_x^2\eta$ as leading-order models in small-slope asymptotic regimes,
		with the fully nonlinear curvature effects encoded in higher-order
		corrections.
	\end{remark}
	
	We now turn to the nondimensionalisation of the dynamic boundary condition
	for the surface potential $\psi$. In dimensional variables this equation reads,
	on the free surface,
	\[
	\partial_t \psi
	= \frac{\rho_s h}{\rho_f}\,\partial_t^2\eta
	+ \frac{1}{\rho_f}\,\mathcal{E}(\eta)
	+ \frac{\gamma}{\rho_f}\,\Delta_x\partial_t\eta
	+ \partial_{x_3}\phi\big(\nabla\phi\cdot\sqrt{1+|\nabla_x\eta|^2}\,n\big)
	- \frac12|\nabla\phi|^2 - g\eta, \quad \mbox { on } \Gamma(t).
	\]
	Using the scalings introduced above, each term may be rewritten in
	nondimensional form. For the inertial term we obtain
	\[
	\frac{\rho_s h}{\rho_f}\,\partial_t^2\eta
	= \frac{\rho_s h}{\rho_f}\,
	H\frac{g}{L}\,\partial_{\tilde t}^2\tilde\eta
	= Hg\left(\frac{\rho_s h}{\rho_f L}\right)\partial_{\tilde t}^2\tilde\eta.
	\]
	For the elastic contribution, we use
	\[
	\frac{1}{\rho_f}\,\mathcal{E}(\eta)
	= \frac{1}{\rho_f}\,\frac{BH}{L^4}\,\mathcal{E}(\tilde\eta;\varepsilon)
	= Hg\left(\frac{B}{\rho_f g L^3}\right)\varepsilon\,
	\mathcal{E}(\tilde\eta;\varepsilon).
	\]
	The damping term scales as
	\[
	\frac{\gamma}{\rho_f}\,\Delta_x\partial_t\eta
	= \frac{\gamma}{\rho_f}\,\frac{1}{L^2}\,\tilde\Delta_x
	\big(H\sqrt{\tfrac{g}{L}}\,\partial_{\tilde t}\tilde\eta\big)
	= Hg\left(\frac{\gamma}{\rho_f\sqrt{gL^3}}\right)\tilde\Delta_x\partial_{\tilde t}\tilde\eta.
	\]
	For the Bernoulli terms we use
	\[
	\nabla\phi = \sqrt{gL}H\,\frac{1}{L}\tilde\nabla\tilde\phi
	= H\sqrt{g}\,\tilde\nabla\tilde\phi,\qquad
	\partial_{x_3}\phi = H\sqrt{g}\,\partial_{\tilde x_3}\tilde\phi,
	\]
	and
	\[
	|\nabla\phi|^2 = Hg\,|\tilde\nabla\tilde\phi|^2,\qquad
	g\eta = Hg\,\tilde\eta.
	\]
	Moreover, on the free surface the kinematic condition implies
	\[
	\nabla\phi\cdot\sqrt{1+|\nabla_x\eta|^2}\,n_\eta
	= H\sqrt{g}\,\tilde\nabla\tilde\phi\cdot
	\sqrt{1+\varepsilon^2|\tilde\nabla_x\tilde\eta|^2}\,\tilde n,
	\]
	where $\tilde n$ is the rescaled unit normal. Altogether, the nonlinear
	Bernoulli term becomes
	\[
	\partial_{x_3}\phi\big(\nabla\phi\cdot\sqrt{1+|\nabla_x\eta|^2}\,n_\eta\big)
	= Hg\,\varepsilon\,\partial_{\tilde x_3}\tilde\phi\big(
	\tilde\nabla\tilde\phi\cdot
	\sqrt{1+\varepsilon^2|\tilde\nabla_x\tilde\eta|^2}\,\tilde n\big),
	\]
	and the quadratic velocity contribution is
	\[
	\frac12|\nabla\phi|^2 = \frac{Hg}{2}|\tilde\nabla\tilde\phi|^2.
	\]
	
	Dividing the entire dynamic boundary condition by the factor $Hg$, we arrive at
	the nondimensional evolution equation
	\[
	\partial_{\tilde t}\tilde\psi
	= -\Upsilon\,\partial_{\tilde t}^2\tilde\eta
	- \beta\,\varepsilon\,\mathcal{E}(\tilde\eta;\varepsilon)
	+ \delta\,\tilde\Delta_x\partial_{\tilde t}\tilde\eta
	+ \varepsilon\,\partial_{\tilde x_3}\tilde\phi\big(
	\tilde\nabla\tilde\phi\cdot
	\sqrt{1+\varepsilon^2|\tilde\nabla_x\tilde\eta|^2}\,\tilde n\big)
	- \frac{\varepsilon}{2}|\tilde\nabla\tilde\phi|^2
	- \tilde\eta, \quad \mbox{ on } \Gamma(t)
	\]
	where we have introduced the dimensionless parameters
	\[
	\Upsilon := \frac{\rho_s h}{\rho_f L},\qquad
	\beta := \frac{B}{\rho_f g L^3},\qquad
	\delta := \frac{\gamma}{\rho_f\sqrt{gL^3}},\qquad
	\varepsilon := \frac{H}{L}.
	\]
	
	\begin{remark}[Physical meaning of the nondimensional parameters]
		The quantity $\varepsilon$ is the usual steepness parameter, measuring the
		ratio between vertical amplitude and horizontal wavelength. The parameter
		$\Upsilon$ represents an effective mass ratio between the plate and a column
		of fluid of depth $L$; it controls the relative strength of plate inertia
		in the coupling. The parameter $\beta$ is a bending Bond number, comparing
		the characteristic bending forces to gravitational restoring forces at
		lengthscale $L$, with an additional factor $\varepsilon$ in front of
		$\mathcal{E}$ reflecting the amplitude of the deformation. Finally,
		$\delta$ is a dimensionless damping parameter, encoding the relative
		importance of Kelvin--Voigt dissipation compared to the natural gravity
		time scale $\sqrt{L/g}$. 
	\end{remark}

	\section*{Acknowledgements} 
	D.A.-O.\ has been partially supported by grant RYC2023-045563-I (MICIU/AEI/10.13039/501100011033
and ESF+).  D.A.-O.\ and R.G.-B.\ are supported by the project
``An\'alisis Matem\'atico Aplicado y Ecuaciones Diferenciales'' Grant PID2022-141187NB-I00 funded by
MCIN/AEI/10.13039/501100011033/FEDER, UE. J.~S.~Ziebell acknowledges that part of this work was carried out during a research visit to the University of Cantabria.

\end{document}